       \font\tenmsb=msbm10
       \font\sevenmsb=msbm7
       \font\fivemsb=msbm5
\let\amstexloaded@\relax\fi
       \def\spaces@{\space\space\space\space\space}
       \def\spaces@@{\spaces@\spaces@\spaces@\spaces@\spaces@}
       \def\space@.  {\futurelet\space@\relax}
       \def\Err@#1{\errhelp\defaulthelp@\errmessage{AmS-TeX error: #1}}
       \def\relaxnext@{\let\next\relax}
       \def\accentfam@{7}
       \def\noaccents@{\def\accentfam@{0}}
       \def\Cal{\relaxnext@\ifmmode\let\next\Cal@\else
       \def\next{\Err@{Use \string\Cal\space only in math mode}}\fi\next}
       \def\Cal@#1{{\Cal@@{#1}}}
       \def\Cal@@#1{\noaccents@\fam\tw@#1}
       \def\Bbb{\relaxnext@\ifmmode\let\next\Bbb@\else
       \def\next{\Err@{Use \string\Bbb\space only in math mode}}\fi\next}
       \def\Bbb@#1{{\Bbb@@{#1}}}
       \def\Bbb@@#1{\noaccents@\fam\msbfam#1}
\newtheorem{thm}{Theorem}[section]
\newtheorem{lem}[thm]{Lemma}
\newtheorem{rem}[thm]{Remark}
\newtheorem{iteration lemma}[thm]{iteration Lemma}
\newtheorem{cor}[thm]{Corollary}
\newtheorem{eg}[thm]{Example}
\newtheorem*{acknowledgements*}{ACKNOWLEDGEMENTS}
\begin{document}

\setlength{\columnsep}{5pt}
\title{\bf 
Continuity of the  core-EP inverse and its applications
}

\author{Yuefeng Gao$^{1,2}$\footnote{  E-mail: yfgao91@163.com},
 \ Jianlong Chen$^{1}$\footnote{E-mail: jlchen@seu.edu.cn}, 
  \ Pedro Patr\'{i}cio$^{2}$ 
  \footnote{ E-mail: pedro@math.uminho.pt}
 \\\\
$^{1}$School of Mathematics, Southeast University, Nanjing 210096, China~~~~~~\!~~~~~~~~~~~~\\
$^{2}$CMAT-Centro de Matem\'{a}tica, Universidade do Minho, Braga 4710-057, Portugal}

  \date{}

\maketitle
\begin{quote}
{\textbf{Abstract: }\small  
In this paper, firstly we study the continuity of the core-EP inverse without explicit error bounds by virtue of two methods.
One is the rank equality, followed from the classical generalized inverse. The other one is matrix decomposition. 
The continuity of the core inverse can be derived as a particular case.
Secondly, we study perturbation bounds for the core-EP inverse  under prescribed conditions. Perturbation bounds for the core inverse can be derived as  a particular case. Also, as corollaries, the sufficient (and necessary) conditions for  the continuity of the core-EP inverse  are obtained. Thirdly, a numerical example is illustrated to compare the derived upper bounds.  Finally, an application to semistable matrices is provided.

\textbf {Keywords:} {\small   core-EP inverse, pseudo core inverse, continuity, perturbation bound}

\textbf {AMS Subject Classifications:} 15A09;  34D10; 65F35}
\end{quote}

\section{ Introduction}
It is known that the inverse of a non-negative matrix is a continuous function.
However, in general, the operations of generalized inverses such as Moore-Penrose inverse, Drazin inverse, weighted Drazin inverse, generalized inverse $A^{(2)}_{T,S}$, core inverse are not continuous \cite{C1975,C1977,C1980,S1969,W2003}. 
It is of interest to know whether the continuity of the core-EP inverse holds.
In this note, we will answer this question.

Throughout this paper, $\mathbb{C}^{n}$ denotes the sets of all $n$-dimensional column vectors
  and $\mathbb{C}^{m\times n}$ is used to denote the set of all $m\times n$ complex matrices. 
For each complex matrix $A\in \mathbb{C}^{m\times n}$, $A^*$ 
denotes the conjugate transpose
of $A$, $\mathcal{R}(A)$ and $\mathcal{N}(A)$ denote the range (column space) and null space of $A$ respectively.
The Moore-Penrose inverse of $A$, denoted by $A^{\dag}$, is the unique solution to 
$$AXA=A,~XAX=X,~(AX)^*=AX~\text{and}~(XA)^*=XA.$$  
Let $A\in \mathbb{C}^{n\times n}$, the index of $A$, denoted by ind$(A)$, is the smallest non-negative integer $k$ for which rank$(A^k)=$rank$(A^{k+1})$. The 
 Drazin inverse of $A$ is the unique solution to system
 $$AXA^k=A^k,~XAX=X~\text{and}~AX=XA.$$

Recall that the core-EP inverse was proposed by Manjunatha Prasad and Mohana \cite{2014D} for a square  matrix of arbitrary index,
as an extension of  the core inverse  restricted to a square  matrix of index one
in \cite{B2010}. Then, Gao and Chen \cite{G2018} characterized the core-EP inverse (also known as the pseudo core inverse) in terms of three equations. Let $A\in \mathbb{C}^{n\times n}$ with ind$(A)=k$, the core-EP inverse of $A$, denoted by $A^{\tiny{\textcircled{\tiny \dag}}}$, is the unique solution to the system
\begin{equation}
XA^{k+1}=A^{k},~AX^2=X,~(AX)^*=AX.
\end{equation}
The core-EP inverse is an
outer inverse (resp. \{2\}-inverse), i.e., $A^{\tiny{\textcircled{\tiny \dag}}}AA^{\tiny{\textcircled{\tiny \dag}}}=A^{\tiny{\textcircled{\tiny \dag}}}$, see \cite{G2018}.  
If $k=1$, then the core-EP inverse of $A$ is the core inverse of $A$. denoted by $A^{\tiny{\textcircled{\tiny \#}}}$ (see \cite{B2010}). 
\begin{lem}\label{1.1}\emph{\cite{G2018}}~Let $A\in\mathbb{C}^{n\times n}$ with $\mathrm{ind}(A)=k$. Then we have the following facts:

$(1)~A^{{\tiny{\textcircled{\tiny \dag}}}}=A^DA^k(A^k)^{\dag},$

$(2)~A^k(A^k)^{\dag}=A^j(A^j)^{\dag}~(j\geq k),$

$(3)~A^D=(A^{{\tiny{\textcircled{\tiny \dag}}}})^{k+1}A^k$.
\end{lem}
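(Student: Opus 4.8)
The plan is to derive (1) directly from the defining system for the core-EP inverse together with the uniqueness of its solution, and then to obtain (2) and (3) as consequences. I will use only standard identities: for the Drazin inverse, $AA^D=A^DA$, $A^DAA^D=A^D$ and $A^{k+1}A^D=A^k$ (hence also $A^DA^{k+1}=A^k$, since $A^D$ commutes with $A$); for the Moore-Penrose inverse, that $MM^{\dag}$ is the orthogonal projector onto $\mathcal R(M)$ and $MM^{\dag}M=M$; and the range identities $\mathcal R(A^D)=\mathcal R(A^DA)=\mathcal R(A^k)$ and $\mathcal R(A^{j})=\mathcal R(A^k)$ for $j\ge k$, the latter because $\mathrm{ind}(A)=k$ means the ranks of the powers of $A$ stabilize from $A^k$ on.

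For (1), put $X=A^DA^k(A^k)^{\dag}$ and abbreviate $P=A^k(A^k)^{\dag}$, the orthogonal projector onto $\mathcal R(A^k)$. The three defining equations are checked in turn. Since $\mathcal R(A^{k+1})\subseteq\mathcal R(A^k)$ one has $PA^{k+1}=A^{k+1}$, so $XA^{k+1}=A^DPA^{k+1}=A^DA^{k+1}=A^k$. Since $\mathcal R(A^D)\subseteq\mathcal R(A^k)$ one has $PA^D=A^D$, whence $X^2=A^DPA^DP=(A^D)^2P$ and $AX^2=A(A^D)^2P=A^DP=X$. Finally $AA^D$ is the spectral idempotent fixing $\mathcal R(A^k)$, so $AA^DA^k=A^k$ and $AX=AA^DP=A^k(A^k)^{\dag}=P$, which is Hermitian; thus $(AX)^*=AX$. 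By the uniqueness part of the characterization of $A^{\tiny{\textcircled{\tiny \dag}}}$, this gives $A^{\tiny{\textcircled{\tiny \dag}}}=A^DA^k(A^k)^{\dag}$.

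Statement (2) is then immediate, because $A^j(A^j)^{\dag}$ and $A^k(A^k)^{\dag}$ are the orthogonal projectors onto $\mathcal R(A^j)$ and $\mathcal R(A^k)$ respectively, these subspaces coincide for $j\ge k$, and the orthogonal projector onto a fixed subspace is unique. For (3), substitute (1): with $PA^D=A^D$ an easy induction gives $(A^{\tiny{\textcircled{\tiny \dag}}})^{m}=(A^D)^mP$ for all $m\ge1$, so $(A^{\tiny{\textcircled{\tiny \dag}}})^{k+1}A^k=(A^D)^{k+1}PA^k=(A^D)^{k+1}A^k$ using $A^k(A^k)^{\dag}A^k=A^k$. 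It remains to see $(A^D)^{k+1}A^k=A^D$, which follows from $A^k=A^{k+1}A^D$ by commuting $A^D$ past $A$ and using idempotency of $A^DA$: $(A^D)^{k+1}A^k=(A^D)^{k+1}A^{k+1}A^D=(A^DA)^{k+1}A^D=A^DAA^D=A^D$.

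I expect the only real care to be needed in part (1): one must keep track of exactly which range inclusion legitimizes each cancellation ($PA^{k+1}=A^{k+1}$, $PA^D=A^D$, $AA^DA^k=A^k$). Once the identity $\mathcal R(A^D)=\mathcal R(A^k)$ is recorded, the remaining steps in all three parts are formal manipulations with the projector $P$ and the commuting idempotent $A^DA$.
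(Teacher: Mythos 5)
Your proof is correct: checking that $X=A^DA^k(A^k)^{\dag}$ satisfies the three equations in (1.1) (using $PA^{k+1}=A^{k+1}$, $PA^D=A^D$ since $\mathcal{R}(A^D)=\mathcal{R}(A^k)$, and $AA^DA^k=A^k$) and invoking uniqueness, then getting (2) from the uniqueness of the orthogonal projector onto $\mathcal{R}(A^j)=\mathcal{R}(A^k)$ and (3) from $(A^{{\tiny{\textcircled{\tiny \dag}}}})^{m}=(A^D)^mP$ together with $(A^D)^{k+1}A^k=A^D$, is valid at every step. The paper itself states Lemma \ref{1.1} without proof, citing \cite{G2018}, and your verification via the defining system plus uniqueness is essentially the standard argument of that reference, so there is no substantive difference in approach.
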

\noindent From Lemma \ref{1.1}, it follows that
 \begin{equation}
 A^{{\tiny{\textcircled{\tiny \dag}}}}=A^DA^k(A^k)^{\dag}=A^DA^{k+1}(A^{k+1})^{\dag}=A^k(A^{k+1})^{\dag}.
 \end{equation}


Recall that the Euclidean vector norm is defined by 
$$\|x\|^2=x^*x~\text{for~any}~x\in \mathbb{C}^n,$$ 
the  spectral norm of a matrix $A\in \mathbb{C}^{n\times n}$ is defined by 
$$\|A\|=\sup\limits_{x\neq 0}\frac{\|Ax\|}{\|x\|},$$
and
\begin{equation*}
\begin{aligned}
&\|Ax\|\leq \|A\|\|x\|~\text{for~all}~A\in \mathbb{C}^{n\times n}~\text{and~all}~x\in \mathbb{C}^n,\\
&\|AB\|\leq \|A\|\|B\|~\text{for~all}~A,~B\in \mathbb{C}^{n\times n},\\
&\|A^*\|=\|A\|~\text{for~all}~A\in \mathbb{C}^{n\times n}.
\end{aligned}
\end{equation*}
For a non-singular matrix $A$, $\kappa(A)=\|A\|\|A^{-1}\|$ denotes the condition number of $A$. As usual, this is generalized to the core-EP condition number $\kappa_{{\tiny{\textcircled{\tiny \dag}}}}(A)=\|A\|\|A^{{\tiny{\textcircled{\tiny \dag}}}}\|$  if $A$ is singular.
\begin{lem}\emph{\cite{S1969}}\label{1.2}~Let $A\in\mathbb{C}^{n\times n}$ with $\|A\|<1$. Then $I+A$ is non-singular and
$$\|(I+A)^{-1}\|\leq (1-\|A\|)^{-1}.$$
\end{lem}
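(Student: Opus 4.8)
The plan is to prove both assertions at once via the Neumann series $\sum_{j=0}^{\infty}(-A)^{j}$. First I would use submultiplicativity of the spectral norm to get $\|(-A)^{j}\|=\|A^{j}\|\le\|A\|^{j}$ for all $j\ge 0$, so that the partial sums $S_{N}=\sum_{j=0}^{N}(-A)^{j}$ satisfy $\|S_{M}-S_{N}\|\le\sum_{j=N+1}^{M}\|A\|^{j}$ for $M>N$. Since $\|A\|<1$, the right-hand side tends to $0$, so $(S_{N})$ is a Cauchy sequence in the complete space $(\mathbb{C}^{n\times n},\|\cdot\|)$ and hence converges to some $S\in\mathbb{C}^{n\times n}$.

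Next I would identify $S$ with a two-sided inverse of $I+A$. A telescoping computation gives $(I+A)S_{N}=S_{N}(I+A)=I-(-A)^{N+1}$, and $\|(-A)^{N+1}\|\le\|A\|^{N+1}\to 0$; letting $N\to\infty$ and using continuity of the product and of the norm yields $(I+A)S=S(I+A)=I$. Hence $I+A$ is non-singular and $(I+A)^{-1}=S$. For the norm estimate I would pass to the limit in the triangle inequality: $\|S_{N}\|\le\sum_{j=0}^{N}\|A\|^{j}\le(1-\|A\|)^{-1}$ for every $N$, so $\|(I+A)^{-1}\|=\|S\|\le(1-\|A\|)^{-1}$.

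Since this is a classical fact, there is no genuine obstacle; the only point deserving a word of care is the appeal to completeness of $\mathbb{C}^{n\times n}$ in the spectral norm to guarantee convergence of the series. If one prefers to avoid convergence arguments entirely, there is a shorter route that I would mention as an alternative: if $(I+A)x=0$ then $\|x\|=\|-Ax\|\le\|A\|\|x\|$, forcing $x=0$, so $I+A$ is injective and therefore invertible in finite dimensions; and for arbitrary $y$, writing $x=(I+A)^{-1}y$ gives $x=y-Ax$, whence $\|x\|\le\|y\|+\|A\|\|x\|$, i.e. $(1-\|A\|)\|x\|\le\|y\|$, which is exactly the asserted bound after taking the supremum over $\|y\|\le 1$.
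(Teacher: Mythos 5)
Your proof is correct. The paper itself offers no argument for this lemma --- it is quoted as a classical fact from Stewart's 1969 paper --- so there is nothing internal to compare against; your Neumann-series construction, with the telescoping identity $(I+A)S_N=I-(-A)^{N+1}$ and the bound $\|S_N\|\leq\sum_{j=0}^{N}\|A\|^{j}\leq(1-\|A\|)^{-1}$, is exactly the standard proof, and your alternative (injectivity of $I+A$ from $\|x\|\leq\|A\|\|x\|$, then $x=y-Ax$ giving $(1-\|A\|)\|x\|\leq\|y\|$) is an equally valid, convergence-free route that suffices in the finite-dimensional setting of the paper.
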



The paper is organized as follows. In Section 2, the continuity of the core-EP inverse without explicit error bounds is investigated by means of a rank equation  and a matrix decomposition respectively.  The continuity of the core inverse are obtained as corollaries.
In Section 3, perturbation bounds for the core-EP inverse  are investigated respectively under three  cases: 

$(1)~\mathcal{R}(E)\subseteq\mathcal{R}(A^k)~\text{and}~\mathcal{N}(A^{k*})\subseteq\mathcal{N}(E)$, where $k=\mathrm{ind}(A)$.

$(2)~AA^{{\tiny{\textcircled{\tiny \dag}}}}=(A+E)(A+E)^{{\tiny{\textcircled{\tiny \dag}}}}~\text{and}~A^{{\tiny{\textcircled{\tiny \dag}}}}A=(A+E)^{{\tiny{\textcircled{\tiny \dag}}}}(A+E),$

$(3)~\mathrm{rank}\left(A^k\right)=\mathrm{rank}\left((A+E)^k\right),$ where $k=\mathrm{max}\{\mathrm{ind}(A),\mathrm{ind}(A+E)\}$.\\
Notice that $(1)$ is equivalent to $$(4)~E=A^{{\tiny{\textcircled{\tiny \dag}}}}AE=EAA^{{\tiny{\textcircled{\tiny \dag}}}}.$$
The relation scheme of (1)-(3) states  as follows : in general, (1) may not imply (2), see Example \ref{1.3}; (2) may not imply (1), see Example \ref{1.4}; (1) may not imply (3), and (3) may not imply (1), see Examples \ref{1.3} and \ref{1.5};
(2) implies (3), but (3) may not imply (2), see Example \ref{1.5}. 
\begin{eg}\label{1.3}~Let $A=\begin{bmatrix}
                                  1&0\\
                                  0&0.1
                                 \end{bmatrix},~E=\begin{bmatrix}
                                  0&0\\
                                  0&-0.1
                                 \end{bmatrix}$. Then $E=A^{{\tiny{\textcircled{\tiny \dag}}}}AE=EAA^{{\tiny{\textcircled{\tiny \dag}}}}$. However,  $AA^{{\tiny{\textcircled{\tiny \dag}}}}\neq(A+E)(A+E)^{{\tiny{\textcircled{\tiny \dag}}}},~A^{{\tiny{\textcircled{\tiny \dag}}}}A\neq(A+E)^{{\tiny{\textcircled{\tiny \dag}}}}(A+E)$ and $\mathrm{rank}(A)\neq \mathrm{rank}(A+E)$.
\end{eg}

\begin{eg}\label{1.4}
 let $A=\begin{bmatrix}
                                  1&0&0\\
                                  0&0&0\\
                                  0&0&0
                                 \end{bmatrix},~E=\begin{bmatrix}
                                  0.1&0&0\\
                                  0&0&0.1\\
                                  0&0&0
                                 \end{bmatrix}$. Then
                                 $AA^{{\tiny{\textcircled{\tiny \dag}}}}=(A+E)(A+E)^{{\tiny{\textcircled{\tiny \dag}}}},~A^{{\tiny{\textcircled{\tiny \dag}}}}A=(A+E)^{{\tiny{\textcircled{\tiny \dag}}}}(A+E)$. However,  $AA^{{\tiny{\textcircled{\tiny \dag}}}}E\neq E$.
\end{eg}

\begin{eg}\label{1.5}~let $A=\begin{bmatrix}
                                  1&0&0\\
                                  0&0&0\\
                                  0&0&0
                                 \end{bmatrix},~E=\begin{bmatrix}
                                 0.1&0.1&0\\
                                  0&0&0\\
                                  0&0&0
                                 \end{bmatrix}$. Then $\mathrm{rank}(A)=\mathrm{rank}(A+E)$. However, $A^{{\tiny{\textcircled{\tiny \dag}}}}A\neq(A+E)^{{\tiny{\textcircled{\tiny \dag}}}}(A+E)$ and $E\neq EAA^{{\tiny{\textcircled{\tiny \dag}}}}$.
\end{eg}

\noindent Among the above conditions, (3) would be the weakest condition to consider the  perturbation bounds for the core-EP inverse. Although (2) is stronger than (3), yet (2) in conjunction  with other restrictions on $A,~E$ would help to acquire a better error bound.  Thus (1)-(3) are all worth to be studied. 
As  special cases, perturbation bounds for the core inverse  are obtained. 
Meanwhile, the sufficient (and necessary) conditions for which the operation  of the core-EP inverse is continuous  are derived as natural  outcomes.  In Section 4, a numerical example is illustrated to compare the upper bounds for   $\frac{\|(A+E)^{{\tiny{\textcircled{\tiny \dag}}}}-A^{{\tiny{\textcircled{\tiny \dag}}}}\|}{\|A^{{\tiny{\textcircled{\tiny \dag}}}}\|}$ by using derived results in Section 3. It turns out that the bounds in case (1) and (2) are slightly better than that in case (3). 
 In Section 5, an  application to semistable matrices is provided.

\section{Continuity of the core-EP inverse}
The following example shows that the core-EP inverse of a square matrix is not continuous.
\begin{eg}~Let $A=\left[\begin{matrix}
                                 1/j &1& 0 &0\\
                                 0&0&0&0\\
                                 0&0&0&1\\
                                 0&0&0&0
                               \end{matrix}\right]$ and 
                               $A=\left[\begin{matrix}
                                 0 &1 &0 &0\\
                                 0&0&0&0\\
                                 0&0&0&1\\
                                 0&0&0&0
                               \end{matrix}\right]$. Then $A_j\rightarrow A$. However, 
 $$A_j^{{\tiny{\textcircled{\tiny \dag}}}}
 =\left[\begin{matrix}
                   j &0 &0 &0\\
                0&0&0&0\\
              0&0&0&0\\
              0&0&0&0
     \end{matrix}\right]\nrightarrow 0=A^{{\tiny{\textcircled{\tiny \dag}}}}.$$

\end{eg}
In the rest of  this section, we consider the necessary and sufficient   conditions for which  the core-EP inverse has the continuity property. 
\subsection{Rank equality method}
In \cite{C1975}, the continuity of classical generalized inverses are studied by means of  rank equalities. 
Analogously, we consider the continuity of the core-EP inverse.  
\begin{lem}\label{2.2}\emph{\cite{C1975}}~Let $\{A_j\}\subseteq \mathbb{C}^{m\times n},$ $A\in \mathbb{C}^{m\times n}$ with $A_{j}\rightarrow A$. Then $A_{j}^{\dag}\rightarrow A^{\dag}$ if and only if there exists $j_0$ such that $\mathrm{rank}(A_j) =\mathrm{rank}(A)$ for $j\geq j_0$.
\end{lem}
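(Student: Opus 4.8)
The plan is to prove the two implications by elementary means, relying only on the continuity of the trace, of the (sorted) eigenvalues of Hermitian matrices, and of matrix inversion.

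For the \emph{necessity} of the rank condition, note that $AA^{\dag}$ is the orthogonal projector onto $\mathcal{R}(A)$: it is Hermitian and idempotent by the Penrose equations, and $\mathcal{R}(AA^{\dag})=\mathcal{R}(A)$ since $A=AA^{\dag}A$. Hence $\mathrm{rank}(A)=\mathrm{tr}(AA^{\dag})$ and, for the same reason, $\mathrm{rank}(A_j)=\mathrm{tr}(A_jA_j^{\dag})$. If $A_j^{\dag}\to A^{\dag}$, then $A_jA_j^{\dag}\to AA^{\dag}$, so by linearity (hence continuity) of the trace, $\mathrm{rank}(A_j)\to\mathrm{rank}(A)$; since this is a convergent sequence of integers, we get $\mathrm{rank}(A_j)=\mathrm{rank}(A)$ for all $j$ large enough.

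For the \emph{sufficiency}, put $r=\mathrm{rank}(A)$ and suppose $\mathrm{rank}(A_j)=r$ for $j\ge j_0$. Pass to the positive semidefinite matrices $B_j:=A_j^{*}A_j\to A^{*}A=:B$; then $\mathrm{rank}(B_j)=\mathrm{rank}(A_j)=r$, so each $B_j$ has exactly $r$ positive eigenvalues and $n-r$ zero eigenvalues. By continuity of the sorted eigenvalues of Hermitian matrices, the $r$ positive eigenvalues of $B_j$ converge to the $r$ positive eigenvalues $\sigma_1^{2}\ge\cdots\ge\sigma_r^{2}>0$ of $B$, so for large $j$ the nonzero spectrum of $B_j$ lies in a fixed compact subset of $(0,\infty)$ bounded away from $0$. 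Let $P_j$ and $P$ be the orthogonal projectors onto $\mathcal{R}(B_j)=\mathcal{R}(A_j^{*})$ and $\mathcal{R}(B)=\mathcal{R}(A^{*})$; writing these as Riesz integrals $\frac{1}{2\pi i}\oint_{\Gamma}(zI-B_j)^{-1}\,dz$ around one fixed contour $\Gamma$ that encircles the common positive part of the spectra and excludes $0$, and using $(zI-B_j)^{-1}\to(zI-B)^{-1}$ uniformly on $\Gamma$, we obtain $P_j\to P$. Now $B_j+I-P_j$ is positive definite and converges to the invertible matrix $B+I-P$, so $(B_j+I-P_j)^{-1}\to(B+I-P)^{-1}$; since $B_j^{\dag}=(B_j+I-P_j)^{-1}-(I-P_j)$, it follows that $B_j^{\dag}\to B^{\dag}$. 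Finally the identity $A^{\dag}=(A^{*}A)^{\dag}A^{*}$ gives $A_j^{\dag}=B_j^{\dag}A_j^{*}\to B^{\dag}A^{*}=A^{\dag}$.

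The main obstacle is the convergence $P_j\to P$ of the spectral projectors: this is the only place the rank hypothesis is used, and it is essential, since without it some eigenvalue of $B_j$ could slide down to $0$ and the projectors (hence the pseudoinverses) would fail to converge. Every other step is just continuity of a polynomial or rational matrix operation. If one prefers to avoid contour integrals, an alternative is to invoke a Wedin-type estimate $\|(A+E)^{\dag}-A^{\dag}\|\le\mu\,\|A^{\dag}\|\,\|(A+E)^{\dag}\|\,\|E\|$, valid precisely when $\mathrm{rank}(A+E)=\mathrm{rank}(A)$, together with $\|A_j^{\dag}\|=1/\sigma_r(A_j)\to 1/\sigma_r(A)$, so that $\|A_j^{\dag}\|$ stays bounded and the right-hand side tends to $0$; but this merely relocates the work into proving that perturbation bound.
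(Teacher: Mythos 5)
The paper does not prove this lemma at all: it is quoted verbatim from Campbell--Meyer \cite{C1975} (the result goes back to Penrose and to Stewart \cite{S1969}), so there is no in-paper argument to compare against. Your proof is correct and self-contained. The necessity direction via $\mathrm{rank}(A_j)=\mathrm{tr}(A_jA_j^{\dag})$ and integrality of the rank is clean. In the sufficiency direction, the two places that deserve an explicit word are both fine: the identity $B_j^{\dag}=(B_j+I-P_j)^{-1}-(I-P_j)$ is immediate by diagonalizing the Hermitian matrix $B_j$, and the uniform convergence of $(zI-B_j)^{-1}$ on the fixed contour $\Gamma$ follows from the resolvent identity together with the bound $\|(zI-B_j)^{-1}\|=1/\mathrm{dist}(z,\sigma(B_j))$, which is uniformly controlled for large $j$ because the sorted eigenvalues of $B_j$ converge to those of $B$ and $\mathrm{rank}(B_j)=r$ pins the remaining $n-r$ eigenvalues at $0$; this is exactly where the rank hypothesis enters, as you note. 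The final step $A_j^{\dag}=(A_j^{*}A_j)^{\dag}A_j^{*}$ is a standard SVD identity. Compared with the classical treatments, your route through Riesz projectors for $A^{*}A$ trades Stewart's explicit perturbation bound (or the blow-up argument $\|A_j^{\dag}\|=1/\sigma_{\min,+}(A_j)\to\infty$ when the rank jumps, which is the analogue of Lemma \ref{1982} in this paper) for a softer continuity argument; it yields no error estimate, but for the continuity statement of Lemma \ref{2.2} that is all that is needed, and your closing remark correctly identifies the Wedin-type bound as the quantitative alternative.
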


\begin{lem}\label{2.3}\emph{\cite{C1975}}~Let $\{A_j\}\subseteq \mathbb{C}^{n\times n},$ $A\in \mathbb{C}^{n\times n}$ with $A_{j}\rightarrow A$. Then $A_{j}^{D}\rightarrow A^{D}$ if and only if there exists $j_0$ such that $\mathrm{rank}(A_j^{\mathrm{ind}(A_j)}) = \mathrm{rank}(A^{\mathrm{ind}(A)})$ for $j\geq j_0$.
\end{lem}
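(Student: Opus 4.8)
The plan is to reduce both directions to two facts that are already at hand: the continuity criterion for the Moore--Penrose inverse (Lemma \ref{2.2}) and the elementary behaviour of idempotent matrices under convergence, together with two standard identities for the Drazin inverse.

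For the sufficiency, the key point is that the Drazin inverse of an $n\times n$ matrix has a \emph{uniform} closed form built from a single fixed power. Indeed $\mathrm{ind}(B)\le n$ for every $B\in\mathbb{C}^{n\times n}$, and for any $\ell\ge\mathrm{ind}(B)$ one has $B^{D}=B^{\ell}YB^{\ell}$ for every inner inverse $Y$ of $B^{2\ell+1}$ (this is immediate from the core--nilpotent decomposition $B=B_{1}\oplus B_{2}$ with $B_{1}$ invertible and $B_{2}$ nilpotent of index $\le\ell$). In particular $B^{D}=B^{n}(B^{2n+1})^{\dag}B^{n}$. Now assume $\mathrm{rank}(A_{j}^{\mathrm{ind}(A_{j})})=\mathrm{rank}(A^{\mathrm{ind}(A)})$ for $j\ge j_{0}$. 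Since $2n+1\ge n\ge\mathrm{ind}(A_{j})$ and $2n+1\ge n\ge\mathrm{ind}(A)$, this hypothesis says exactly that $\mathrm{rank}(A_{j}^{2n+1})=\mathrm{rank}(A^{2n+1})$ for $j\ge j_{0}$; as $A_{j}^{2n+1}\to A^{2n+1}$, Lemma \ref{2.2} yields $(A_{j}^{2n+1})^{\dag}\to(A^{2n+1})^{\dag}$. Combined with $A_{j}^{n}\to A^{n}$ and the continuity of matrix multiplication, this gives $A_{j}^{D}=A_{j}^{n}(A_{j}^{2n+1})^{\dag}A_{j}^{n}\to A^{n}(A^{2n+1})^{\dag}A^{n}=A^{D}$.

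For the necessity, I would transfer the rank condition to the spectral idempotent $P_{A}:=AA^{D}=A^{D}A$. Since $\mathcal{R}(A^{D})=\mathcal{R}(A^{\mathrm{ind}(A)})$ one has $\mathrm{rank}(P_{A})=\mathrm{rank}(A^{\mathrm{ind}(A)})$, and similarly $\mathrm{rank}(P_{A_{j}})=\mathrm{rank}(A_{j}^{\mathrm{ind}(A_{j})})$. If $A_{j}^{D}\to A^{D}$, then $P_{A_{j}}=A_{j}A_{j}^{D}\to AA^{D}=P_{A}$ by continuity of multiplication, a convergence of idempotents. Using $\mathrm{rank}(Q)=\mathrm{tr}(Q)$ for any idempotent $Q$ and continuity of the trace, $\mathrm{rank}(P_{A_{j}})=\mathrm{tr}(P_{A_{j}})\to\mathrm{tr}(P_{A})=\mathrm{rank}(P_{A})$; since these are non-negative integers, they are eventually equal, i.e.\ $\mathrm{rank}(A_{j}^{\mathrm{ind}(A_{j})})=\mathrm{rank}(A^{\mathrm{ind}(A)})$ for all large $j$.

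I expect the necessity to be the main obstacle. Lower semicontinuity of rank only delivers $\mathrm{rank}(A_{j}^{\mathrm{ind}(A_{j})})\ge\mathrm{rank}(A^{\mathrm{ind}(A)})$ for large $j$, and one has no a priori bound on $\mathrm{ind}(A_{j})$ in terms of $\mathrm{ind}(A)$ --- for example $A_{j}=\frac{1}{j}N$ with $N$ a nilpotent Jordan block tends to $0$ while $\mathrm{ind}(A_{j})$ stays equal to the size of $N$ --- so the reverse inequality really does need the idempotent/trace argument rather than a crude continuity estimate. The remaining ingredients (the two Drazin identities and $\mathcal{R}(A^{D})=\mathcal{R}(A^{\mathrm{ind}(A)})$) are routine and can be quoted from the literature.
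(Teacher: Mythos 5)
Your argument is correct. Note, however, that the paper does not prove this statement at all: Lemma \ref{2.3} is quoted from Campbell and Meyer \cite{C1975}, so there is no in-paper proof to compare against; what you have written is a self-contained proof of the cited result. Both halves check out: the identity $B^{D}=B^{\ell}YB^{\ell}$ for any inner inverse $Y$ of $B^{2\ell+1}$ with $\ell\ge\mathrm{ind}(B)$ follows exactly as you say from the core--nilpotent decomposition, and since $\mathrm{rank}(B^{m})=\mathrm{rank}(B^{\mathrm{ind}(B)})$ for all $m\ge\mathrm{ind}(B)$, the hypothesis is indeed equivalent to $\mathrm{rank}(A_{j}^{2n+1})=\mathrm{rank}(A^{2n+1})$, so Lemma \ref{2.2} applied to $A_{j}^{2n+1}\to A^{2n+1}$ gives sufficiency. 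For necessity, the passage to the spectral idempotents $A_{j}A_{j}^{D}\to AA^{D}$, the identification $\mathrm{rank}(AA^{D})=\mathrm{rank}(A^{\mathrm{ind}(A)})$ (since $AA^{D}$ is the projector onto $\mathcal{R}(A^{\mathrm{ind}(A)})$ along $\mathcal{N}(A^{\mathrm{ind}(A)})$), and the trace/integrality argument are all sound, and your closing remark correctly identifies why mere lower semicontinuity of rank would not suffice. This route --- reduction of the Drazin inverse to a fixed Moore--Penrose inverse via $A^{D}=A^{n}(A^{2n+1})^{\dag}A^{n}$ together with the eigenprojection argument --- is essentially the classical Campbell--Meyer approach, so you have in effect reconstructed the proof the paper outsources to \cite{C1975}.
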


\begin{lem}\label{2.4}\emph{\cite{C1975}}~Let $\{A_j\}\subseteq \mathbb{C}^{n\times n},$ $A\in \mathbb{C}^{n\times n}$ with $A_{j}\rightarrow A$, $A_{j}^{D}\rightarrow A^{D}$. Then  there exists $j_0$ such that $\mathrm{ind}(A) \leq \mathrm{ind}(A_j)$ for $j\geq j_0$.
\end{lem}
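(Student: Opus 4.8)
The plan is to prove the inequality by tracking the ranks of the powers of $A_j$ against those of $A$: Lemma~\ref{2.3} will pin down the \emph{stabilized} rank of $A_j$, while lower semicontinuity of the rank function will control the rank one step \emph{before} stabilization, and the monotonicity of the ranks of iterates will force the two facts together into the desired bound on the index.

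First I would dispose of the trivial case $\mathrm{ind}(A)=0$: then $A$ is invertible and $0=\mathrm{ind}(A)\leq\mathrm{ind}(A_j)$ holds for every $j$, so there is nothing to prove. Assume henceforth $k:=\mathrm{ind}(A)\geq 1$, and write $k_j:=\mathrm{ind}(A_j)$. Since $A_j\to A$ and $A_j^{D}\to A^{D}$ by hypothesis, Lemma~\ref{2.3} yields an index $j_1$ with $\mathrm{rank}(A_j^{k_j})=\mathrm{rank}(A^{k})$ for all $j\geq j_1$. On the other hand, $A_j\to A$ forces $A_j^{\,k-1}\to A^{k-1}$ by continuity of matrix multiplication, and the rank function is lower semicontinuous (a matrix of rank $r$ has a nonvanishing $r\times r$ minor, and determinants of submatrices depend continuously on the entries); hence there is $j_2$ with $\mathrm{rank}(A_j^{\,k-1})\geq\mathrm{rank}(A^{k-1})$ for all $j\geq j_2$. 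Finally, because $k=\mathrm{ind}(A)\geq 1$ is the \emph{least} integer at which the ranks of the powers of $A$ stabilize, the inequality $\mathrm{rank}(A^{k-1})>\mathrm{rank}(A^{k})$ is strict. Putting these together, for every $j\geq j_0:=\max\{j_1,j_2\}$ one has
$$\mathrm{rank}(A_j^{\,k-1})\ \geq\ \mathrm{rank}(A^{k-1})\ >\ \mathrm{rank}(A^{k})\ =\ \mathrm{rank}(A_j^{k_j}).$$

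To conclude, recall that for each fixed $j$ the sequence $m\mapsto\mathrm{rank}(A_j^{\,m})$ is non-increasing and is constant for $m\geq k_j$. If we had $k-1\geq k_j$, this would give $\mathrm{rank}(A_j^{\,k-1})=\mathrm{rank}(A_j^{k_j})$, contradicting the strict inequality displayed above; hence $k-1<k_j$, that is $\mathrm{ind}(A)=k\leq k_j=\mathrm{ind}(A_j)$ for all $j\geq j_0$. (If one prefers not to invoke Lemma~\ref{2.3}, the rank equality $\mathrm{rank}(A_j^{k_j})=\mathrm{rank}(A^{k})$ can be obtained directly: $A_jA_j^{D}\to AA^{D}$ are idempotents, so their ranks equal their traces, which converge, and an integer-valued convergent sequence is eventually constant, while $\mathrm{rank}(AA^{D})=\mathrm{rank}(A^{k})$ and $\mathrm{rank}(A_jA_j^{D})=\mathrm{rank}(A_j^{k_j})$ are standard.) The only genuinely delicate point is the \emph{direction} of the semicontinuity: the rank of $A_j^{\,k-1}$ can only jump up in the limit, whereas Lemma~\ref{2.3} fixes $\mathrm{rank}(A_j^{k_j})$ exactly; it is precisely the collision of these two one-sided facts with the monotonicity of ranks of powers that produces the index inequality, and this is the step I expect to require the most care to state correctly.
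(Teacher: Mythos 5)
Your proof is correct. Every step checks out: Lemma~\ref{2.3} (or your alternative trace-of-idempotents argument) gives $\mathrm{rank}(A_j^{k_j})=\mathrm{rank}(A^{k})$ eventually; lower semicontinuity of rank gives $\mathrm{rank}(A_j^{\,k-1})\geq\mathrm{rank}(A^{k-1})$ eventually; the strict drop $\mathrm{rank}(A^{k-1})>\mathrm{rank}(A^{k})$ for $k=\mathrm{ind}(A)\geq 1$ and the fact that $m\mapsto\mathrm{rank}(A_j^{\,m})$ is non-increasing and constant from $k_j$ on then force $k_j\geq k$. However, this is a genuinely different route from the one the paper relies on. The paper cites \cite{C1975} for this lemma and exhibits the intended technique in its proof of Lemma~\ref{2.5}: extract a subsequence $\{A_{j_i}\}$ of constant index $k$, pass to the limit in the defining identity $A_{j_i}^{D}A_{j_i}^{k+1}=A_{j_i}^{k}$ to get $A^{D}A^{k+1}=A^{k}$, hence $\mathrm{ind}(A)\leq k$, and then use that the index takes only finitely many values in $\{0,\dots,n\}$ to rule out infinitely many $j$ with $\mathrm{ind}(A_j)<\mathrm{ind}(A)$. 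That argument is shorter, needs no rank characterization of Drazin continuity, and transfers verbatim to the core-EP inverse (which is exactly how Lemma~\ref{2.5} is proved); your argument instead trades the subsequence extraction for Lemma~\ref{2.3} plus semicontinuity of rank, and in return makes explicit the quantitative mechanism (a would-be index drop collides with the stabilized rank), with your parenthetical trace argument even making it self-contained. Either proof is acceptable.
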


Analogous to Lemma \ref{2.4}, we  establish  a similar result for the core-EP inverse.

\begin{lem}\label{2.5}~Let $\{A_j\}\subseteq \mathbb{C}^{n\times n},$ $A\in \mathbb{C}^{n\times n}$ with $A_{j}\rightarrow A$, $A_{j}^{{\tiny{\textcircled{\tiny \dag}}}}\rightarrow A^{{\tiny{\textcircled{\tiny \dag}}}}$. Then  there exists $j_0$ such that $\mathrm{ind}(A) \leq \mathrm{ind}(A_j)$ for $j\geq j_0$.
\end{lem}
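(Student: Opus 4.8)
The plan is to mimic the strategy used for the Drazin inverse in Lemma~\ref{2.4}, exploiting the identity $A^D=(A^{{\tiny{\textcircled{\tiny \dag}}}})^{k+1}A^k$ from Lemma~\ref{1.1}(3) to transfer convergence of core-EP inverses to convergence of Drazin inverses, and then invoke Lemma~\ref{2.4} directly. Concretely, write $k=\mathrm{ind}(A)$. First I would observe that, since $A_j\to A$ and $A_j^{{\tiny{\textcircled{\tiny \dag}}}}\to A^{{\tiny{\textcircled{\tiny \dag}}}}$, the continuity of matrix multiplication gives
\begin{equation*}
A_j^D=(A_j^{{\tiny{\textcircled{\tiny \dag}}}})^{k+1}A_j^{\,k}\longrightarrow (A^{{\tiny{\textcircled{\tiny \dag}}}})^{k+1}A^k=A^D,
\end{equation*}
where the last equality is exactly Lemma~\ref{1.1}(3). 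Here I must be slightly careful: Lemma~\ref{1.1}(3) reads $A^D=(A^{{\tiny{\textcircled{\tiny \dag}}}})^{k+1}A^k$ with $k=\mathrm{ind}(A)$, so the right-hand limit is $A^D$; the left-hand expression $(A_j^{{\tiny{\textcircled{\tiny \dag}}}})^{k+1}A_j^{\,k}$ uses the \emph{same} exponent $k=\mathrm{ind}(A)$, not $\mathrm{ind}(A_j)$. The point worth checking is that $(A_j^{{\tiny{\textcircled{\tiny \dag}}}})^{k+1}A_j^{\,k}$ still equals $A_j^D$ even when $\mathrm{ind}(A_j)\neq k$; this follows because for any $\ell\geq \mathrm{ind}(A_j)$ one has $(A_j^{{\tiny{\textcircled{\tiny \dag}}}})^{\ell+1}A_j^{\,\ell}=A_j^D$ (the core-EP inverse being an outer inverse with the appropriate range/null-space properties, so higher powers just telescope), and if $k<\mathrm{ind}(A_j)$ for some $j$ one would need a separate small argument — but this case can be sidestepped since we only need the conclusion for large $j$.

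Once $A_j^D\to A^D$ is established, I would simply apply Lemma~\ref{2.4} to the sequence $\{A_j\}$: it yields $j_0$ with $\mathrm{ind}(A)\leq \mathrm{ind}(A_j)$ for all $j\geq j_0$, which is exactly the assertion. This is the cleanest route and makes the lemma almost a corollary of the Drazin case.

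The main obstacle is the bookkeeping around the exponent $k$ versus $\mathrm{ind}(A_j)$ in the formula $A_j^D=(A_j^{{\tiny{\textcircled{\tiny \dag}}}})^{k+1}A_j^{\,k}$. To handle it cleanly I would first prove the auxiliary fact that, for $B\in\mathbb{C}^{n\times n}$ and \emph{any} integer $\ell\geq 1$, the product $(B^{{\tiny{\textcircled{\tiny \dag}}}})^{\ell+1}B^{\ell}$ equals $B^D$ whenever $\ell\geq\mathrm{ind}(B)$, and more generally stabilizes; combined with $A_j\to A$ forcing $\mathrm{rank}(A_j^m)\geq\mathrm{rank}(A^m)$ for each fixed $m$ and $j$ large (lower semicontinuity of rank), one gets that for $j$ large the index $\mathrm{ind}(A_j)$ cannot collapse below what is needed, so the fixed exponent $k$ is admissible. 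An alternative, if that technical lemma is deemed too heavy, is to argue directly: set $\ell_j=\max\{k,\mathrm{ind}(A_j)\}$, note $(A_j^{{\tiny{\textcircled{\tiny \dag}}}})^{\ell_j+1}A_j^{\,\ell_j}=A_j^D$ and $(A^{{\tiny{\textcircled{\tiny \dag}}}})^{\ell_j+1}A^{\,\ell_j}=A^D$ (since $\ell_j\geq k$), and pass to the limit along a subsequence on which $\ell_j$ is constant — there are only finitely many possible values up to $n$. Either way the conclusion $A_j^D\to A^D$ drops out, and Lemma~\ref{2.4} finishes the proof.
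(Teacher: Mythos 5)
Your preferred route, as written, leans on the wrong inequality. The identity $A_j^D=(A_j^{{\tiny{\textcircled{\tiny \dag}}}})^{\ell+1}A_j^{\ell}$ is valid for $\ell\geq\mathrm{ind}(A_j)$, so using the fixed exponent $k=\mathrm{ind}(A)$ requires $\mathrm{ind}(A_j)\leq k$ --- which is not assumed and is exactly what can fail: for $A=0\in\mathbb{C}^{n\times n}$ and $A_j=\frac{1}{j}N$ with $N$ nilpotent of index $n\geq 2$, the hypotheses of the lemma hold ($A_j^{{\tiny{\textcircled{\tiny \dag}}}}=0\rightarrow 0=A^{{\tiny{\textcircled{\tiny \dag}}}}$) while $\mathrm{ind}(A_j)=n>1=\mathrm{ind}(A)$ for every $j$. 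Your first patch does not repair this: lower semicontinuity of rank gives $\mathrm{rank}(A_j^m)\geq\mathrm{rank}(A^m)$, which bounds nothing from above and therefore cannot make the fixed exponent $k$ ``admissible''; the admissibility you need is an upper bound on $\mathrm{ind}(A_j)$, not a lower one, and it is false in general. What does work is your alternative: with $\ell_j=\max\{k,\mathrm{ind}(A_j)\}$ one has $(A_j^{{\tiny{\textcircled{\tiny \dag}}}})^{\ell_j+1}A_j^{\ell_j}=A_j^D$ and $(A^{{\tiny{\textcircled{\tiny \dag}}}})^{\ell_j+1}A^{\ell_j}=A^D$; the auxiliary identity $(B^{{\tiny{\textcircled{\tiny \dag}}}})^{\ell+1}B^{\ell}=B^D$ for $\ell\geq\mathrm{ind}(B)$ does need to be stated, but it telescopes immediately from $B^{{\tiny{\textcircled{\tiny \dag}}}}B^{m+1}=B^{m}$ ($m\geq\mathrm{ind}(B)$) together with Lemma \ref{1.1}(3). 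Since $\ell_j$ takes only finitely many values, splitting into constant-$\ell_j$ subsequences yields $A_j^D\rightarrow A^D$, and Lemma \ref{2.4} then gives the conclusion. So the proposal is salvageable, but only through that second argument; the first route and its rank patch should be deleted.

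For comparison, the paper's proof is more direct and never touches the Drazin inverse: it splits $\{A_j\}$ into subsequences of constant index $k$, passes to the limit in the single defining equation $A_{j_i}^{{\tiny{\textcircled{\tiny \dag}}}}A_{j_i}^{k+1}=A_{j_i}^{k}$ to get $A^{{\tiny{\textcircled{\tiny \dag}}}}A^{k+1}=A^{k}$, which forces $\mathrm{rank}(A^k)=\mathrm{rank}(A^{k+1})$, i.e.\ $\mathrm{ind}(A)\leq k$, and concludes because the index takes only finitely many values. Your (corrected) route costs the auxiliary identity and the imported Lemma \ref{2.4}, but it buys, as a by-product, the implication $A_j^{{\tiny{\textcircled{\tiny \dag}}}}\rightarrow A^{{\tiny{\textcircled{\tiny \dag}}}}\Rightarrow A_j^D\rightarrow A^D$ (essentially $(1)\Rightarrow(2)$ of Theorem \ref{2.6}) without any appeal to Lemma \ref{2.5} itself, so there is no circularity; it is simply a heavier path to the same statement.
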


\begin{proof}~The proof is similar to the Drazin inverse case.  For completeness, let us give the proof. 

Suppose that $A_{j}\rightarrow A$ and $A_{j}^{{\tiny{\textcircled{\tiny \dag}}}}\rightarrow A^{{\tiny{\textcircled{\tiny \dag}}}}$. Let $\{A_{j_i}\}$ be a subsequence with constant index $k$ of $\{A_{j}\}$. Then $A_{j_i}^{{\tiny{\textcircled{\tiny \dag}}}}(A_{j_i})^{k+1}=(A_{j_i})^k$. By taking limits, we derive that 
$$A^{{\tiny{\textcircled{\tiny \dag}}}}A^{k+1}=A^k.$$
Hence ind$(A)\leq k$. Since the index function takes only finitely many values between 0 and $n$, we obtain that there exists a $j_{0}$ such that $$\mathrm{ind}(A) \leq \mathrm{ind}(A_j)~\text{for}~j\geq j_0.$$
\end{proof}
Making an integral application of Lemmas \ref{2.2}-\ref{2.5}, we derive the following result.
\begin{thm}\label{2.6}~Let $\{A_j\}\subseteq \mathbb{C}^{n\times n},$ $A\in \mathbb{C}^{n\times n}$ with $A_{j}\rightarrow A$. Then the following are equivalent:

$(1)~A_{j}^{{\tiny{\textcircled{\tiny \dag}}}}\rightarrow A^{{\tiny{\textcircled{\tiny \dag}}}};$

$(2)~A_j^{D}\rightarrow A^{D};$ 

$(3)$~there exists $j_0$ such that $\mathrm{rank}(A_j^{\mathrm{ind}(A_j)}) = \mathrm{rank}(A^{\mathrm{ind}(A)})$ for $j\geq j_0;$

$(4)$~there exists $j_0$ such that $\mathrm{rank}(A_j^{\mathrm{ind}(A_j)})= \mathrm{rank}(A^{\mathrm{ind}(A_j)})= \mathrm{rank}(A^{\mathrm{ind}(A)})$ for $j\geq j_0.$
\end{thm}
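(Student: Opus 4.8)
The plan is to prove the cycle $(1)\Rightarrow(2)\Rightarrow(3)\Rightarrow(4)\Rightarrow(1)$, leaning on Lemmas~\ref{1.1}, \ref{2.2}, \ref{2.3}, \ref{2.5} and the relation $A^{{\tiny{\textcircled{\tiny \dag}}}}=A^DA^k(A^k)^\dag$ together with $A^D=(A^{{\tiny{\textcircled{\tiny \dag}}}})^{k+1}A^k$.

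\smallskip

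\noindent\emph{$(1)\Rightarrow(2)$ and $(1)\Rightarrow(3)$.} Assume $A_j^{{\tiny{\textcircled{\tiny \dag}}}}\to A^{{\tiny{\textcircled{\tiny \dag}}}}$. By Lemma~\ref{2.5} there is $j_0$ with $\mathrm{ind}(A)\leq\mathrm{ind}(A_j)$ for $j\geq j_0$; fix such a $j$ and write $k_j=\mathrm{ind}(A_j)$, $k=\mathrm{ind}(A)$, so $k\leq k_j$. Using part (3) of Lemma~\ref{1.1} for $A_j$, $A_j^{D}=(A_j^{{\tiny{\textcircled{\tiny \dag}}}})^{k_j+1}A_j^{k_j}$. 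Since the index takes only finitely many values on $\{j\geq j_0\}$, pass to a subsequence along which $k_j$ is a constant $\ell\geq k$; along it $A_j^{D}=(A_j^{{\tiny{\textcircled{\tiny \dag}}}})^{\ell+1}A_j^{\ell}\to (A^{{\tiny{\textcircled{\tiny \dag}}}})^{\ell+1}A^{\ell}$. Now I must identify this limit with $A^D$: since $\ell\geq k=\mathrm{ind}(A)$, one has $A^D=(A^D)^{\ell+1}A^{\ell}$, and combining with $A^D=(A^{{\tiny{\textcircled{\tiny \dag}}}})^{k+1}A^k$ and $A^{{\tiny{\textcircled{\tiny \dag}}}}=A^DA^k(A^k)^\dag$ one checks $(A^{{\tiny{\textcircled{\tiny \dag}}}})^{\ell+1}A^{\ell}=A^D$ (this is the small algebraic point, done once for a general square matrix of index $\le\ell$). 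As every subsequence of $\{A_j^{D}\}$ has a further subsequence converging to $A^D$ and the values lie in a bounded set once convergence of one subsequence is known, $A_j^{D}\to A^D$, giving $(2)$. Then Lemma~\ref{2.3} yields $(3)$ at once.

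\smallskip

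\noindent\emph{$(2)\Rightarrow(3)$} is immediate from Lemma~\ref{2.3}, and $(3)\Rightarrow(2)$ likewise; so $(2)\Leftrightarrow(3)$.

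\smallskip

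\noindent\emph{$(3)\Rightarrow(4)$.} Suppose $\mathrm{rank}(A_j^{\mathrm{ind}(A_j)})=\mathrm{rank}(A^{\mathrm{ind}(A)})$ for $j\geq j_0$. By Lemma~\ref{2.3} this gives $A_j^D\to A^D$, hence by Lemma~\ref{2.4} (applied to the Drazin inverse) $\mathrm{ind}(A)\leq\mathrm{ind}(A_j)$ eventually; for such $j$, $\mathrm{rank}(A^{\mathrm{ind}(A_j)})=\mathrm{rank}(A^{\mathrm{ind}(A)})$ because once the exponent reaches $\mathrm{ind}(A)$ the rank of powers of $A$ stabilizes. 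This inserts the middle term and yields $(4)$.

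\smallskip

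\noindent\emph{$(4)\Rightarrow(1)$.} Assume the double rank equality for $j\geq j_0$; put $k=\mathrm{ind}(A)$, $k_j=\mathrm{ind}(A_j)$. From $\mathrm{rank}(A_j^{k_j})=\mathrm{rank}(A^{k})$ and Lemma~\ref{2.3}, $A_j^{D}\to A^{D}$. Next I treat $A_j^{k_j}(A_j^{k_j})^\dag$: I want $A_j^{k_j}\to A^{k_j}$ along a subsequence where $k_j\equiv\ell$ is constant — which is clear since $A_j\to A$ — and then apply Lemma~\ref{2.2} to the sequence $A_j^{\ell}$, for which I need $\mathrm{rank}(A_j^{\ell})=\mathrm{rank}(A^{\ell})$; this is exactly the content of $(4)$ restricted to that subsequence (with $\mathrm{ind}(A_j)=\ell$). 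Hence $(A_j^{\ell})^\dag\to (A^{\ell})^\dag$, so $A_j^{\ell}(A_j^{\ell})^\dag\to A^{\ell}(A^{\ell})^\dag$, and by part (2) of Lemma~\ref{1.1} the latter equals $A^{k}(A^{k})^\dag$ since $\ell\geq k$. Using $A_j^{{\tiny{\textcircled{\tiny \dag}}}}=A_j^{D}A_j^{k_j}(A_j^{k_j})^\dag$ (Lemma~\ref{1.1}(1) for $A_j$) we get $A_j^{{\tiny{\textcircled{\tiny \dag}}}}\to A^{D}A^{k}(A^{k})^\dag=A^{{\tiny{\textcircled{\tiny \dag}}}}$ along the subsequence. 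A subsequence argument as before promotes this to convergence of the whole sequence, completing the cycle.

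\smallskip

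The main obstacle is the bookkeeping around the varying index $\mathrm{ind}(A_j)$: one cannot assume it is constant, so every step that uses the defining formulas for $A_j^{D}$ or $A_j^{{\tiny{\textcircled{\tiny \dag}}}}$ must first pass to a subsequence of constant index, carry out the limit there, and then use a standard "every subsequence has a convergent further subsequence with the same limit" argument to conclude for the full sequence. The only genuinely algebraic lemma needed is that for a square matrix $B$ with $\mathrm{ind}(B)\leq\ell$ one has $(B^{{\tiny{\textcircled{\tiny \dag}}}})^{\ell+1}B^{\ell}=B^{D}$ and $B^{\ell}(B^{\ell})^\dag=B^{\mathrm{ind}(B)}(B^{\mathrm{ind}(B)})^\dag$, both of which follow readily from Lemma~\ref{1.1} and the stabilization of ranges of powers past the index.
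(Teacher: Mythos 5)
Your proposal is correct and follows essentially the same route as the paper: the same cycle $(1)\Rightarrow(2)$, $(2)\Leftrightarrow(3)$, $(3)\Rightarrow(4)$, $(4)\Rightarrow(1)$, using Lemmas \ref{1.1}, \ref{2.2}, \ref{2.3}, \ref{2.4}, \ref{2.5} and the identities $A^D=(A^{{\tiny{\textcircled{\tiny \dag}}}})^{k+1}A^k$ and $A^{{\tiny{\textcircled{\tiny \dag}}}}=A^DA^k(A^k)^{\dag}$ exactly as the paper does. Your only deviation is presentational: you pass explicitly to constant-index subsequences and invoke the standard subsequence principle, which just makes rigorous the paper's looser handling of the $j$-dependent exponents (the aside about boundedness is unnecessary, and note that the middle equality in $(4)$ already forces $\mathrm{ind}(A_j)\geq\mathrm{ind}(A)$ since ranks of powers strictly decrease below the index).
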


\begin{proof}~$(1)\Rightarrow(2)$ From Lemma \ref{1.1}, it follows that $A_j^D=(A_{j}^{{\tiny{\textcircled{\tiny \dag}}}})^{\mathrm{ind}(A_j)+1}A_j^{\mathrm{ind}(A_j)}$, which  converges to $(A^{{\tiny{\textcircled{\tiny \dag}}}})^{\mathrm{ind}(A_j)+1}A^{\mathrm{ind}(A_j)}$. By Lemma \ref{2.5}, there exists $j_0$ such that $\mathrm{ind}(A_j)\geq \mathrm{ind}(A)$ for $j\geq j_0$.
Then for $j\geq j_0$, 
 $$(A^{{\tiny{\textcircled{\tiny \dag}}}})^{\mathrm{ind}(A_j)+1}A^{\mathrm{ind}(A_j)}=(A^{{\tiny{\textcircled{\tiny \dag}}}})^
{\mathrm{ind}(A)+1}A^{\mathrm{ind}(A)}=A^D.$$ Namely $A_j^D \rightarrow A^D$.

$(2)\Leftrightarrow(3)$ It is clear by Lemma \ref{2.3}.

$(3)\Rightarrow(4)$ Since $\mathrm{rank}(A_j^{\mathrm{ind}(A_j)}) = \mathrm{rank}(A^{\mathrm{ind}(A)})$ for $j\geq j_0$, then  
$A_j^D \rightarrow A^D~\text{by~Lemma}$~\ref{2.3}. 
Thus, there exists  $j_1$ such that $\mathrm{ind}(A_j)\geq \mathrm{ind}(A)$ for $j\geq j_1$ in view of Lemma \ref{2.4}. Therefore $$A^{\mathrm{ind}(A_j)}=A^{\mathrm{ind}(A)}A^{\mathrm{ind}(A_j)-\mathrm{ind}(A)}~\text{and}~A^{\mathrm{ind}(A)}=(A^D)^{\mathrm{ind}(A_j)-\mathrm{ind}(A)}A^{\mathrm{ind}(A_j)}$$
for $j\geq j_0=$max$\{j_0,j_1\}$, which imply that $\mathrm{rank}(A^{\mathrm{ind}(A_j)})=\mathrm{rank}(A^{\mathrm{ind}(A)})$ for $j\geq j_0$.

$(4)\Rightarrow(1)$ From the assumption, we derive
\begin{equation*}
\begin{aligned}
&A_j^{D}\rightarrow A^D~\text{by~applying~Lemma}~\ref{2.3},\\
&~\text{there~exists~}j_1~\text{such~that}~\mathrm{ind}(A_j)\geq \mathrm{ind}(A)~\text{for}~j\geq j_1~\text{by~applying~Lemma}~\ref{2.4},\\
&(A_j^{\mathrm{ind}(A_j)})^{\dag}\rightarrow (A^{\mathrm{ind}(A_j)})^{\dag}~\text{by~applying~Lemma}~\ref{2.2}.
\end{aligned}
\end{equation*}
In light of Lemma \ref{1.1}, 
$A_j^{{\tiny{\textcircled{\tiny \dag}}}}=A_j^DA_j^{\mathrm{ind}(A_j)}(A_j^{\mathrm{ind}(A_j)})^{\dag}\rightarrow A^DA^{\mathrm{ind}(A_j)}(A^{\mathrm{ind}(A_j)})^{\dag}$.
Since $\mathrm{ind}(A_j)\geq \mathrm{ind}(A)$ for $j\geq j_1$, then $A^{\mathrm{ind}(A_j)}(A^{\mathrm{ind}(A_j)})^{\dag}=A^{\mathrm{ind}(A)}(A^{\mathrm{ind}(A)})^{\dag}$ for $j\geq j_1$. Hence,  
$A_j^{{\tiny{\textcircled{\tiny \dag}}}}\rightarrow A^DA^{\mathrm{ind}(A)}(A^{\mathrm{ind}(A)})^{\dag}=A^{{\tiny{\textcircled{\tiny \dag}}}}.$
\end{proof}

The continuity of   the core inverse can be derived as a particular case $\mathrm{ind}(A)=\mathrm{ind}(A_j)=1$ in Theorem \ref{2.6}.
\begin{cor}~If $\{A_j\}\subseteq \mathbb{C}^{n\times n},$ $A\in \mathbb{C}^{n\times n}$ and $A_{j}\rightarrow A$. Then the following are equivalent:

$(1)~A_{j}^{{\tiny{\textcircled{\tiny \#}}}}\rightarrow A^{{\tiny{\textcircled{\tiny \#}}}};$

$(2)~A_j^{\#}\rightarrow A^{\#};$

$(3)$~there exists $j_0$ such that $\mathrm{rank}(A_j) = \mathrm{rank}(A)$ for $j\geq j_0.$
\end{cor}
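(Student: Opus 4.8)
The plan is to obtain the corollary as a direct specialization of Theorem \ref{2.6} to the case where every matrix involved has index one. First I would observe that, as recalled in the introduction, when $\mathrm{ind}(A)=1$ the core-EP inverse $A^{{\tiny{\textcircled{\tiny \dag}}}}$ reduces to the core inverse $A^{{\tiny{\textcircled{\tiny \#}}}}$, and likewise the Drazin inverse $A^D$ reduces to the group inverse $A^{\#}$; the same reductions hold for each $A_j$ once $\mathrm{ind}(A_j)=1$. So the statements ``$A_j^{{\tiny{\textcircled{\tiny \#}}}}\to A^{{\tiny{\textcircled{\tiny \#}}}}$'' and ``$A_j^{\#}\to A^{\#}$'' are, under the standing index-one hypothesis, literally instances of conditions (1) and (2) of Theorem \ref{2.6}, and condition (3) of Theorem \ref{2.6} becomes ``$\mathrm{rank}(A_j)=\mathrm{rank}(A)$ for $j\ge j_0$,'' which is condition (3) here. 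Thus the equivalences $(1)\Leftrightarrow(2)\Leftrightarrow(3)$ of the corollary follow immediately.

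The one point that needs a word of care is that Theorem \ref{2.6} does not assume the indices are one; it asserts an equivalence for general square matrices. To invoke it for the corollary I must make sure the hypothesis $\mathrm{ind}(A)=\mathrm{ind}(A_j)=1$ is genuinely in force. The cleanest way is to state the corollary (as the author does) under the blanket assumption that all the $A_j$ and $A$ have index one — equivalently, that the group inverses and core inverses in items (2) and (1) are defined. Under that assumption $\mathrm{ind}(A^{\mathrm{ind}(A_j)})=\mathrm{ind}(A)=1$, so condition (4) of Theorem \ref{2.6} collapses to condition (3), and no separate argument is needed. I would therefore simply write: ``This is the particular case $\mathrm{ind}(A)=\mathrm{ind}(A_j)=1$ of Theorem \ref{2.6}, upon noting that then $A^{{\tiny{\textcircled{\tiny \dag}}}}=A^{{\tiny{\textcircled{\tiny \#}}}}$, $A_j^{{\tiny{\textcircled{\tiny \dag}}}}=A_j^{{\tiny{\textcircled{\tiny \#}}}}$, $A^D=A^{\#}$ and $A_j^D=A_j^{\#}$.''

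The only conceivable obstacle — and it is a mild one — is the implicit assumption about the indices of the perturbed matrices $A_j$. If one wanted a version with no a priori hypothesis on $\mathrm{ind}(A_j)$, one would need to argue that convergence of the core inverses (or the hypothesis $\mathrm{rank}(A_j)=\mathrm{rank}(A)$ with $\mathrm{ind}(A)=1$) already forces $\mathrm{ind}(A_j)=1$ for large $j$; this can be extracted from Lemma \ref{2.4} and the rank condition, since $\mathrm{rank}(A_j)=\mathrm{rank}(A^{\mathrm{ind}(A)})=\mathrm{rank}(A^{\mathrm{ind}(A_j)})$ together with $\mathrm{rank}(A_j)\ge \mathrm{rank}(A_j^{\mathrm{ind}(A_j)})$ pins the index of $A_j$ down to one. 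But for the statement as given, where the corollary is understood relative to matrices of index one, no such refinement is required, and the proof is a one-line appeal to Theorem \ref{2.6}.
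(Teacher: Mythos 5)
Your proposal is correct and matches the paper's own treatment: the corollary is stated there precisely as the particular case $\mathrm{ind}(A)=\mathrm{ind}(A_j)=1$ of Theorem \ref{2.6}, with the core-EP inverse reducing to the core inverse and the Drazin inverse to the group inverse. Your extra remark about the implicit index-one hypothesis on the $A_j$ is a reasonable clarification but does not change the argument.
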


\subsection{Matrix decomposition method}

In \cite{C1977}, Pierce decomposition is used to study the continuity of the Moore-Penrose inverse. However this approach is  not suitable for the core-EP inverse since the core-EP inverse is not an inner inverse. As an alternative, we make use of
the core-EP decomposition. 

Recall that the core-EP decomposition  \cite{W2016} of $A$ is 
\begin{equation}
A=U\begin{bmatrix}
           T&S\\
           0&N
          \end{bmatrix}U^*=U\begin{bmatrix}
           T&S\\
           0&0
          \end{bmatrix}U^*+U\begin{bmatrix}
           0&0\\
           0&N
          \end{bmatrix}U^*=A_1+A_2,
 \end{equation} 
 where $U$ is unitary, $T$ is non-singular and $N$ is nilpotent with index $k$, in which case,
          $$A^{{\tiny{\textcircled{\tiny \dag}}}}=A_1^{{\tiny{\textcircled{\tiny \#}}}}=U\begin{bmatrix}
           T^{-1}&0\\
           0&0
          \end{bmatrix}U^*.$$

Fix $A\in \mathbb{C}^{n\times n}$ with $\mathrm{ind}(A)=k$ and consider the following equations
\begin{equation}
XA^{k+1}-A^k=E_1,~AX^2-X=E_2~\text{and}~AX-(AX)^*=E_3.
\end{equation}
Here $X$ may be thought of as an approximation  and the $E_i~(i=1,~2,~3)$ as error terms. Let $X=A^{{\tiny{\textcircled{\tiny \dag}}}}+F.$ Then (2.2) becomes
\begin{equation}
FA^{k+1}=E_1,~~~~~~
\end{equation}
\begin{equation}
~~~~~~~~~~~~~~~~~~~~~AA^{{\tiny{\textcircled{\tiny \dag}}}}F+AFA^{{\tiny{\textcircled{\tiny \dag}}}}+AF^2-F=E_2,
\end{equation}
\begin{equation}
AF-(AF)^*=E_3.
\end{equation}

Suppose that $F=U\begin{bmatrix}
           X_1&X_2\\
           X_3&X_4
          \end{bmatrix}U^*$. According to (2.3),       
\begin{equation*}
U\begin{bmatrix}
           X_1T^k&X_1\sum\limits_{i+j=k}T^{i}SN^j\\
           X_3T^k&X_3\sum\limits_{i+j=k}T^{i}SN^j
          \end{bmatrix}U^*=E_1,  
          \end{equation*}
       \begin{equation}   
      \text{i.e.,}~\begin{bmatrix}
           X_1&\Theta_2\\
           X_3&\Theta_4
          \end{bmatrix}=U^*E_1U\begin{bmatrix}
           (T^k)^{-1}&0\\
           0&I
          \end{bmatrix},
\end{equation}
$\text{where}~ 
\Theta_2=X_1\sum\limits_{i+j=k}T^{i}SN^j~\text{and}~
\Theta_4=X_3\sum\limits_{i+j=k}T^{i}SN^j.$
\\
Then according to (2.4),    
\begin{equation}
U\begin{bmatrix}
           \Delta_1&\Delta_2\\
           \Delta_3&NX_3X_2+NX_4^2-X_4
          \end{bmatrix}U^*=E_2,
\end{equation} 
where 
\begin{equation*}
\begin{aligned}
&\Delta_1=TX_1T^{-1}+SX_3T^{-1}+TX_1^2+SX_3X_1+TX_2X_3+SX_4X_3,~~~~~~~~~~~~~~~~~\\
&\Delta_2=TX_1X_2+SX_3X_2+TX_2X_4+SX_4^2,\\
&\Delta_3=NX_3T^{-1}-X_3+NX_3X_1+NX_4X_3.  
\end{aligned}
\end{equation*}
\\
Finally according to (2.5),   
\begin{equation}
U\begin{bmatrix}
           \Gamma_1&TX_2+SX_4-(NX_3)^*\\
           \Gamma_3&\Gamma_4
          \end{bmatrix}U^*=E_3,
\end{equation}
where $ \Gamma_1=T_1X_1+SX_3-(T_1X_1+SX_3)^*$, $ \Gamma_3=NX_3-(TX_2+SX_4)^*$ and $\Gamma_4=NX_4-(NX_4)^*$.

If $E_i\rightarrow 0$, by applying  (2.6)-(2.8), then
\begin{equation}
X_1\rightarrow 0,~X_3\rightarrow 0,
\end{equation} 
\begin{equation}
NX_4^2-X_4\rightarrow 0, 
\end{equation}  
\begin{equation}
X_2+T^{-1}SX_4\rightarrow 0.  
\end{equation} 
From (2.9), it follows that 
\begin{equation}
X_4\rightarrow NX_4^2\rightarrow N^{k}X_4^{k+1}=0.
\end{equation}
Plug $X_4\rightarrow 0$ into (2.10), giving 
\begin{equation} X_2\rightarrow 0.
\end{equation}
In view of (2.9), (2.12) and (2.13),  $F\rightarrow 0$.
Hence we have the following result.
\begin{thm}\label{2.8}~Let $A\in \mathbb{C}^{n\times n}$ with $\mathrm{ind}(A)=k$. If $\{X_j\}$ is a sequence of $n\times n$ matrices such that the sequences 
$\{X_jA^{k+1}-A^{k}\},~\{AX_j^2-X_j\}~\text{and}~\{AX_j-(AX_j)^*\}$ all converge to zero, then $\{X_j\}$ converges to $A^{{\tiny{\textcircled{\tiny \dag}}}}$.
\end{thm}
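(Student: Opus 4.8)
The plan is to substitute $X_j = A^{{\tiny{\textcircled{\tiny \dag}}}} + F_j$ and show $F_j \to 0$, exactly along the lines already laid out in the paragraphs preceding the statement; the theorem is essentially the clean packaging of that computation, so the ``proof'' is really a bookkeeping exercise once the core-EP decomposition of $A$ is fixed. First I would write $A = U\left[\begin{smallmatrix} T & S \\ 0 & N \end{smallmatrix}\right]U^*$ as in (2.1), so that $A^{{\tiny{\textcircled{\tiny \dag}}}} = U\left[\begin{smallmatrix} T^{-1} & 0 \\ 0 & 0 \end{smallmatrix}\right]U^*$, and conformally block $F_j = U\left[\begin{smallmatrix} X_1 & X_2 \\ X_3 & X_4 \end{smallmatrix}\right]U^*$ (dropping the subscript $j$ for readability). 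Setting $E_1 = X_jA^{k+1}-A^k$, $E_2 = AX_j^2 - X_j$, $E_3 = AX_j - (AX_j)^*$, the hypothesis is precisely $E_i \to 0$, and equations (2.2)--(2.5) translate these into the block identities (2.6), (2.7), (2.8).

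The heart of the argument is the chain of implications (2.6)--(2.8) $\Rightarrow$ (2.9)--(2.11) $\Rightarrow$ (2.12), (2.13). I would extract $X_1, X_3 \to 0$ from the $(1,1)$ and $(2,1)$ blocks of (2.6), since $E_1 U\left[\begin{smallmatrix}(T^k)^{-1} & 0\\ 0 & I\end{smallmatrix}\right] \to 0$ forces $X_1 \to 0$ and $X_3 \to 0$ directly (the right factor is a fixed invertible-on-the-first-block matrix; more precisely one reads off $X_1 \to 0$, $X_3 \to 0$ from the first block-column). Next, feeding $X_1, X_3 \to 0$ into the $(2,2)$ block of (2.7) gives $NX_4^2 - X_4 \to 0$, i.e.\ (2.10); and into the $(1,2)$ block of (2.8) gives $TX_2 + SX_4 \to 0$, hence $X_2 + T^{-1}SX_4 \to 0$, i.e.\ (2.11), using that $T$ is invertible. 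The only genuinely non-mechanical step is (2.12): from $X_4 \to NX_4^2$ one iterates to get $X_4 \to N^k X_4^{k+1}$, and since $N^k = 0$ (nilpotency of index $k$) this yields $X_4 \to 0$. I would spell out this iteration carefully, because it is the one place where convergence of a \emph{product} of sequences is being used repeatedly: if $X_4 - NX_4^2 \to 0$ then multiplying by $N$ and substituting shows $X_4 - N^2 X_4^3 \to 0$, and an easy induction gives $X_4 - N^m X_4^{m+1} \to 0$ for every $m$; the sequences $\{X_4\}$ are bounded once the earlier relations are in force, so each $N^m X_4^{m+1}$ term stays controlled and at $m=k$ it vanishes identically. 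Then $X_4 \to 0$ plugged into (2.11) gives $X_2 \to 0$, which is (2.13).

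Finally, assembling $X_1, X_2, X_3, X_4 \to 0$ gives $F_j = U\left[\begin{smallmatrix} X_1 & X_2 \\ X_3 & X_4 \end{smallmatrix}\right]U^* \to 0$, hence $X_j = A^{{\tiny{\textcircled{\tiny \dag}}}} + F_j \to A^{{\tiny{\textcircled{\tiny \dag}}}}$, which is the assertion. The main obstacle — such as it is — is the boundedness issue hiding in (2.12): one must be sure the entries $X_4$ do not blow up while the error terms go to zero. This is handled by noticing that the block equations (2.6)--(2.8) express $X_1, X_3$ (and then, via (2.11), $X_2$) as continuous functions of the $E_i$ and $X_4$, and the $(2,2)$-relation $X_4 = NX_4^2 - (E_2)_{22}$ together with $N$ nilpotent confines $X_4$ to a bounded region for $\|E_2\|$ small; alternatively, since $\{X_j\}$ is \emph{given} and $A^{{\tiny{\textcircled{\tiny \dag}}}}$ is fixed, one may simply observe that it suffices to prove every convergent subsequence of $\{F_j\}$ has limit $0$, and each such subsequence is automatically bounded, which removes the issue entirely. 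I would include the subsequence remark as the cleanest way to close the gap.
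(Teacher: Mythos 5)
Your overall route is exactly the paper's: fix the core-EP decomposition (2.1), write $X_j=A^{{\tiny{\textcircled{\tiny \dag}}}}+F_j$, block $F_j$, and run the chain (2.6)--(2.13). You also correctly identify the one delicate point, namely that the step ``$X_4\rightarrow NX_4^2\rightarrow N^kX_4^{k+1}=0$'' is an iteration of products of sequences and silently needs the $X_4$-blocks to stay bounded (the same issue already enters in deriving (2.10), since the $(2,2)$ block of (2.7) is $NX_3X_2+NX_4^2-X_4$, so one must first control $X_2$, hence $X_4$ via (2.11)). The paper's own sketch glosses over this, so your instinct to address it is the right one.

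However, neither of your two proposed repairs closes the gap. The subsequence remark is logically invalid: for an unbounded sequence, ``every convergent subsequence tends to $0$'' does not imply convergence to $0$ (the scalar sequence $0,1,0,2,0,3,\dots$ has all its convergent subsequences tending to $0$); the scenario you must exclude is precisely a subsequence along which $\|X_4\|\to\infty$, which has no convergent sub-subsequence and is untouched by the remark. The other claim, that the $(2,2)$-relation with $N$ nilpotent ``confines $X_4$ to a bounded region'' for small error, is false as stated (besides misquoting the block, dropping $NX_3X_2$): take the index-$3$ nilpotent $N=\left[\begin{smallmatrix}0&1&0\\0&0&1\\0&0&0\end{smallmatrix}\right]$ and
\begin{equation*}
X_4=\begin{bmatrix} 0&\delta^{-2}&0\\ 0&\delta^{-1}&0\\ \delta&0&0\end{bmatrix},
\qquad\text{for which}\qquad
X_4-NX_4^2=\begin{bmatrix} 0&0&0\\ 0&0&0\\ \delta&0&0\end{bmatrix}\rightarrow 0
\quad\text{while}\quad \|X_4\|\rightarrow\infty .
\end{equation*}
So (2.10) alone does not yield (2.12) once $k\geq 3$, and a correct proof must genuinely establish boundedness (or convergence) of $X_4$ by bringing in information that neither your write-up nor the paper's sketch uses --- for instance the $(2,2)$ block of (2.8), $NX_4-(NX_4)^*\rightarrow 0$, which is exactly what the matrix above violates. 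Until such an argument is supplied, the passage from (2.9)--(2.11) to (2.12)--(2.13) in your proposal remains unjustified.
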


A consequence of Theorem \ref{2.8} is that it makes sense to check a computed $\hat{A}^{{\tiny{\textcircled{\tiny \dag}}}}$ exactly by using the system (1.1) if $A$ is known.

The case of the core inverse can be derived by letting $k=1$ in 
Theorem \ref{2.8}.
\begin{cor}~Let $A\in \mathbb{C}^{n\times n}$ with $\mathrm{ind}(A)=1$. If $\{X_j\}$ is a sequence of $n\times n$ matrices such that the sequences 
$\{X_jA^{2}-A\},~\{AX_j^2-X_j\}~\text{and}~\{AX_j-(AX_j)^*\}$ all converge to zero, then $\{X_j\}$ converges to $A^{{\tiny{\textcircled{\tiny \#}}}}$.
\end{cor}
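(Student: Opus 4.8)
The final statement is the Corollary specializing Theorem \ref{2.8} to index one, i.e., recovering the continuity criterion for the core inverse from that of the core-EP inverse.

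\medskip

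The plan is to derive the Corollary directly from Theorem \ref{2.8} by substituting $k = \mathrm{ind}(A) = 1$, rather than re-running the matrix-decomposition argument from scratch. First I would observe that when $\mathrm{ind}(A) = 1$, the core-EP inverse $A^{{\tiny{\textcircled{\tiny \dag}}}}$ coincides with the core inverse $A^{{\tiny{\textcircled{\tiny \#}}}}$, as recalled in the introduction just after equation (1.1). Second, I would note that with $k = 1$ the defining system (1.1) becomes exactly $XA^{2} = A$, $AX^{2} = X$, $(AX)^{*} = AX$, so the three perturbed equations in the hypothesis of Theorem \ref{2.8}, namely $X_jA^{k+1} - A^{k}$, $AX_j^2 - X_j$, and $AX_j - (AX_j)^*$, specialize precisely to $X_jA^{2} - A$, $AX_j^2 - X_j$, and $AX_j - (AX_j)^*$. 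Third, I would invoke Theorem \ref{2.8} verbatim: since all three of these sequences converge to zero by assumption, the theorem yields $X_j \to A^{{\tiny{\textcircled{\tiny \dag}}}} = A^{{\tiny{\textcircled{\tiny \#}}}}$, which is the claim.

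\medskip

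There is essentially no obstacle here, since the Corollary is a pure specialization; the only thing to be careful about is the bookkeeping identification $A^{{\tiny{\textcircled{\tiny \dag}}}} = A^{{\tiny{\textcircled{\tiny \#}}}}$ in the index-one case and matching the exponents in the three error expressions when $k=1$. If one instead wished to give a self-contained proof, one could rerun the computation leading from (2.3)--(2.8) to (2.9)--(2.14) with the core-EP decomposition (2.1) in which $N$ is nilpotent of index $1$, hence $N = 0$; then $A_2 = 0$, the off-diagonal block $S$ is irrelevant to the limiting behaviour, and the chain (2.11)--(2.14) collapses immediately to $X_4 \to 0$, $X_2 \to 0$, giving $F \to 0$ at once. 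But the one-line deduction from Theorem \ref{2.8} is cleaner, so that is the route I would take and write up.
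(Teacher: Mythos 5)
Your proposal is correct and matches the paper exactly: the paper obtains this corollary precisely by setting $k=1$ in Theorem \ref{2.8} and identifying $A^{{\tiny{\textcircled{\tiny \dag}}}}$ with $A^{{\tiny{\textcircled{\tiny \#}}}}$ in the index-one case. No further comment is needed.
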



\section{Perturbation bounds}
In this section, we consider perturbation bounds for the core-EP inverse under prescribed conditions. We refer readers to \cite{WL2003,C2008,V2009,W1997,C2000,S1969,R1982,X2010} for a deep study of the perturbation bounds for classical generalized inverses and refer readers to \cite{M2018} for  the core inverse.

\subsection{The case:  $\mathcal{R}(E)\subseteq\mathcal{R}(A^k)~\text{and}~\mathcal{N}(A^{k*})\subseteq\mathcal{N}(E)$}

 In this part, we study perturbation bounds for $(A+E)^{{\tiny{\textcircled{\tiny \dag}}}}$ in the case: $$\mathcal{R}(E)\subseteq\mathcal{R}(A^k),~\mathcal{N}(A^{k*})\subseteq\mathcal{N}(E),~\text{where}~k=\mathrm{ind}(A).$$ 
After which, a sufficient condition for the continuity of  the core-EP inverse  is derived naturally. 
 \begin{thm}\label{theorem:3.4}Let $A,~E\in\mathbb{C}^{n\times n}$ and~$k=\mathrm{ind}(A)$. If $\mathcal{R}(E)\subseteq\mathcal{R}(A^k)$, $\mathcal{N}(A^{k*})\subseteq\mathcal{N}(E)$ and $\|A^{{\tiny{\textcircled{\tiny \dag}}}}E\|<1$. Then
 \begin{equation}
 (A+E)^{{\tiny{\textcircled{\tiny \dag}}}}=(I+A^{{\tiny{\textcircled{\tiny \dag}}}}E)^{-1}A^{{\tiny{\textcircled{\tiny \dag}}}}
 \end{equation}
and
  \begin{equation}
 \frac{\|(A+E)^{{\tiny{\textcircled{\tiny \dag}}}}-A^{{\tiny{\textcircled{\tiny \dag}}}}\|}{\|A^{{\tiny{\textcircled{\tiny \dag}}}}\|}\leq\frac{\|A^{{\tiny{\textcircled{\tiny \dag}}}}E\|}{1-\|A^{{\tiny{\textcircled{\tiny \dag}}}}E\|}.
 \end{equation}
 \end{thm}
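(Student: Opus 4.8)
The plan is to establish the two conditions $\mathcal{R}(E)\subseteq\mathcal{R}(A^k)$ and $\mathcal{N}(A^{k*})\subseteq\mathcal{N}(E)$ are equivalent to the algebraic identities $E=A^{{\tiny{\textcircled{\tiny \dag}}}}AE=EAA^{{\tiny{\textcircled{\tiny \dag}}}}$ (this equivalence is already announced in the introduction as the relation between conditions (1) and (4)), and then to exploit these identities to massage $A+E$ into a factored form. First I would recall from Lemma \ref{1.1} and equation (1.2) that $AA^{{\tiny{\textcircled{\tiny \dag}}}}=A^k(A^k)^{\dag}$ is the orthogonal projector onto $\mathcal{R}(A^k)$, and that $A^{{\tiny{\textcircled{\tiny \dag}}}}A$ is the projector (along the core-EP decomposition) with $\mathcal{R}(A^{{\tiny{\textcircled{\tiny \dag}}}}A)=\mathcal{R}(A^{{\tiny{\textcircled{\tiny \dag}}}})=\mathcal{R}(A^k)$ and $\mathcal{N}(A^{{\tiny{\textcircled{\tiny \dag}}}}A)=\mathcal{N}(A^{{\tiny{\textcircled{\tiny \dag}}}})$. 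From $\mathcal{R}(E)\subseteq\mathcal{R}(A^k)=\mathcal{R}(AA^{{\tiny{\textcircled{\tiny \dag}}}})$ we get $AA^{{\tiny{\textcircled{\tiny \dag}}}}E=E$, and taking conjugate transposes in $\mathcal{N}(A^{k*})\subseteq\mathcal{N}(E)$, i.e. $\mathcal{R}(E^*)\subseteq\mathcal{R}(A^k)=\mathcal{R}(A^{{\tiny{\textcircled{\tiny \dag}}}}A)$, gives $(A^{{\tiny{\textcircled{\tiny \dag}}}}A)^*E^*=E^*$... actually here I need $E=EAA^{{\tiny{\textcircled{\tiny \dag}}}}$, so I would use $\mathcal{N}(A^{k*})=\mathcal{N}((A^k)^\dag)=\mathcal{N}(A^{{\tiny{\textcircled{\tiny \dag}}}})$ and the fact that $AA^{{\tiny{\textcircled{\tiny \dag}}}}$ is Hermitian to transfer the null-space containment into the right-multiplication identity $EAA^{{\tiny{\textcircled{\tiny \dag}}}}=E$.

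The core computation is then to verify the candidate formula $(A+E)^{{\tiny{\textcircled{\tiny \dag}}}}=(I+A^{{\tiny{\textcircled{\tiny \dag}}}}E)^{-1}A^{{\tiny{\textcircled{\tiny \dag}}}}$ (note $I+A^{{\tiny{\textcircled{\tiny \dag}}}}E$ is invertible by Lemma \ref{1.2} since $\|A^{{\tiny{\textcircled{\tiny \dag}}}}E\|<1$) by checking the three defining equations of the core-EP inverse from system (1.1): writing $B=A+E$, $k'=\mathrm{ind}(B)$, and $X=(I+A^{{\tiny{\textcircled{\tiny \dag}}}}E)^{-1}A^{{\tiny{\textcircled{\tiny \dag}}}}$, I must show $XB^{k'+1}=B^{k'}$, $BX^2=X$, and $(BX)^*=BX$. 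The key auxiliary facts are: since $E=EAA^{{\tiny{\textcircled{\tiny \dag}}}}$ we have $B=A+E=(I+EA^{{\tiny{\textcircled{\tiny \dag}}}})A$, and combined with $E=AA^{{\tiny{\textcircled{\tiny \dag}}}}E$ one gets $BX = (A+E)(I+A^{{\tiny{\textcircled{\tiny \dag}}}}E)^{-1}A^{{\tiny{\textcircled{\tiny \dag}}}}$; using $(A+E)(I+A^{{\tiny{\textcircled{\tiny \dag}}}}E)^{-1}=A$ would follow from $A+E=A(I+A^{{\tiny{\textcircled{\tiny \dag}}}}E)$, which in turn needs $E=AA^{{\tiny{\textcircled{\tiny \dag}}}}E$ together with the identity $AA^{{\tiny{\textcircled{\tiny \dag}}}}$ acting as identity on $\mathcal{R}(A)\cap\mathcal{R}(A^k)$; more carefully, $A(I+A^{{\tiny{\textcircled{\tiny \dag}}}}E)=A+AA^{{\tiny{\textcircled{\tiny \dag}}}}E=A+E$ using exactly $AA^{{\tiny{\textcircled{\tiny \dag}}}}E=E$. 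Hence $BX=AA^{{\tiny{\textcircled{\tiny \dag}}}}$, which is Hermitian, giving the third equation immediately; and $BX^2 = AA^{{\tiny{\textcircled{\tiny \dag}}}}X = AA^{{\tiny{\textcircled{\tiny \dag}}}}(I+A^{{\tiny{\textcircled{\tiny \dag}}}}E)^{-1}A^{{\tiny{\textcircled{\tiny \dag}}}} = (I+A^{{\tiny{\textcircled{\tiny \dag}}}}E)^{-1}A^{{\tiny{\textcircled{\tiny \dag}}}}=X$ once one checks $AA^{{\tiny{\textcircled{\tiny \dag}}}}$ commutes with $(I+A^{{\tiny{\textcircled{\tiny \dag}}}}E)^{-1}$ and fixes $A^{{\tiny{\textcircled{\tiny \dag}}}}$ on the left (using $AA^{{\tiny{\textcircled{\tiny \dag}}}}A^{{\tiny{\textcircled{\tiny \dag}}}}=A^{{\tiny{\textcircled{\tiny \dag}}}}$, an outer-inverse consequence). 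For the first equation I would first show $\mathrm{ind}(B)\le k$ (because $B=(I+EA^{{\tiny{\textcircled{\tiny \dag}}}})A$ and the invertible factor doesn't change the relevant rank stabilization, or directly that $\mathcal{R}(B^k)=\mathcal{R}(A^k)$ and $\mathcal{N}(B^k)=\mathcal{N}(A^k)$), so I can take $k'=k$; then verify $XB^{k+1}=B^k$ by repeatedly using $B=A+E$, $A^{{\tiny{\textcircled{\tiny \dag}}}}A B = B$ on the relevant subspace, and the known identity $A^{{\tiny{\textcircled{\tiny \dag}}}}A^{k+1}=A^k$ from (1.1).

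Once formula (3.1) is in hand, the bound (3.2) is routine: subtract, $(A+E)^{{\tiny{\textcircled{\tiny \dag}}}}-A^{{\tiny{\textcircled{\tiny \dag}}}}=\big((I+A^{{\tiny{\textcircled{\tiny \dag}}}}E)^{-1}-I\big)A^{{\tiny{\textcircled{\tiny \dag}}}}=-(I+A^{{\tiny{\textcircled{\tiny \dag}}}}E)^{-1}(A^{{\tiny{\textcircled{\tiny \dag}}}}E)A^{{\tiny{\textcircled{\tiny \dag}}}}$, then take norms, apply submultiplicativity and Lemma \ref{1.2} to bound $\|(I+A^{{\tiny{\textcircled{\tiny \dag}}}}E)^{-1}\|\le (1-\|A^{{\tiny{\textcircled{\tiny \dag}}}}E\|)^{-1}$, and divide by $\|A^{{\tiny{\textcircled{\tiny \dag}}}}\|$. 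I expect the main obstacle to be the bookkeeping in the first defining equation $XB^{k+1}=B^k$ — specifically, proving that the index does not increase (so that $k$ is a legitimate common value) and carefully tracking which of the two hypotheses $AA^{{\tiny{\textcircled{\tiny \dag}}}}E=E$, $EAA^{{\tiny{\textcircled{\tiny \dag}}}}=E$ is needed at each cancellation; the other two equations and the perturbation estimate are comparatively mechanical.
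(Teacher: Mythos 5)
Your overall strategy --- translating the two subspace hypotheses into $AA^{{\tiny{\textcircled{\tiny \dag}}}}E=E=EAA^{{\tiny{\textcircled{\tiny \dag}}}}$ (and $A^{{\tiny{\textcircled{\tiny \dag}}}}AE=E$) and then verifying the three defining equations of the core-EP inverse for the candidate $X=(I+A^{{\tiny{\textcircled{\tiny \dag}}}}E)^{-1}A^{{\tiny{\textcircled{\tiny \dag}}}}$ --- is genuinely different from the paper's proof, which works entirely inside the core-EP decomposition: it writes $A=U\begin{bmatrix}T&S\\0&N\end{bmatrix}U^*$, shows the hypotheses force $E=U\begin{bmatrix}E_1&0\\0&0\end{bmatrix}U^*$, and reads off $(A+E)^{{\tiny{\textcircled{\tiny \dag}}}}=U\begin{bmatrix}(T+E_1)^{-1}&0\\0&0\end{bmatrix}U^*=(I+A^{{\tiny{\textcircled{\tiny \dag}}}}E)^{-1}A^{{\tiny{\textcircled{\tiny \dag}}}}$, after which the bound follows from $(A+E)^{{\tiny{\textcircled{\tiny \dag}}}}-A^{{\tiny{\textcircled{\tiny \dag}}}}=-A^{{\tiny{\textcircled{\tiny \dag}}}}E(A+E)^{{\tiny{\textcircled{\tiny \dag}}}}$ and Lemma \ref{1.2}. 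In your version, the derivation of the multiplicative identities, the computations $BX=AA^{{\tiny{\textcircled{\tiny \dag}}}}$ (from the correct factorization $B=A+E=A(I+A^{{\tiny{\textcircled{\tiny \dag}}}}E)$), $BX^2=X$, $(BX)^*=BX$, and the final norm estimate are all sound.

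However, the step you yourself flag as the main obstacle --- $XB^{k'+1}=B^{k'}$ --- is where your sketch currently breaks, and two of the auxiliary identities you lean on there are false when $\mathrm{ind}(A)>1$: from $EAA^{{\tiny{\textcircled{\tiny \dag}}}}=E$ you get $B=A+EAA^{{\tiny{\textcircled{\tiny \dag}}}}$, \emph{not} $B=(I+EA^{{\tiny{\textcircled{\tiny \dag}}}})A$ (since $AA^{{\tiny{\textcircled{\tiny \dag}}}}\neq A^{{\tiny{\textcircled{\tiny \dag}}}}A$ in general; in block form $EA^{{\tiny{\textcircled{\tiny \dag}}}}A$ acquires an extra $E_1T^{-1}S$ term), and likewise $A^{{\tiny{\textcircled{\tiny \dag}}}}AB=B$ fails because $A^{{\tiny{\textcircled{\tiny \dag}}}}A^2\neq A$ unless $N=0$. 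Moreover, ``an invertible factor does not change the rank stabilization'' is not a valid index argument (the index is not invariant under one-sided multiplication by an invertible matrix), and $\mathcal{N}(B^k)=\mathcal{N}(A^k)$ is false in general. The repair is short: with $W=I+A^{{\tiny{\textcircled{\tiny \dag}}}}E$ and $B=AW$, the equation $XB^{k+1}=B^{k}$ is equivalent to $A^{{\tiny{\textcircled{\tiny \dag}}}}AB^{k}=B^{k}$, i.e.\ to $\mathcal{R}(B^{k})\subseteq\mathcal{R}(A^{k})$, because $A^{{\tiny{\textcircled{\tiny \dag}}}}A$ is an idempotent with range $\mathcal{R}(A^{k})$; and that inclusion follows by induction from $\mathcal{R}(E)\subseteq\mathcal{R}(A^{k})$ alone, via $\mathcal{R}(B^{m})\subseteq B\,\mathcal{R}(A^{m-1})\subseteq\mathcal{R}(A^{m})+\mathcal{R}(E)\subseteq\mathcal{R}(A^{m})$ for $m\leq k$. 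Finally, since you verify the system at exponent $k=\mathrm{ind}(A)$, which may exceed $\mathrm{ind}(A+E)$, you should either prove $\mathrm{ind}(A+E)\leq k$ separately or invoke the characterization of the pseudo core inverse in \cite{G2018}, where the first equation is only required for \emph{some} positive integer $m$ and the solution is unique. With these corrections your argument goes through and gives a decomposition-free alternative to the paper's proof.
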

 
\begin{proof}In view of  (2.5), there exist unitary matrices $U$ such that $$A=U\left[\begin{matrix}
                  T& S\\
                  0 &N
                  \end{matrix}
                  \right]U^*.$$
 Let $$E=U\left[\begin{matrix}
                  E_{1}& E_{2}\\
                  E_{3} &E_{4}
                  \end{matrix}
                  \right]U^*.$$   
From   the assumption    $\mathcal{R}(E)\subseteq\mathcal{R}(A^k)$, it follows that $E=A^{{\tiny{\textcircled{\tiny \dag}}}}AE$, which implies
$$E_{3}=0~\text{and}~E_{4}=0.$$
Then from  the assumption $\mathcal{N}(A^{k*})\subseteq\mathcal{N}(E)$,  we have $E=EAA^{{\tiny{\textcircled{\tiny \dag}}}}$, which deduces that 
$$E_{2}=0.$$ 
Thus,   
\begin{equation*}
\begin{aligned}
 &A+E=U\left[\begin{matrix}
                  T+E_{1}& S\\
                 0&N
                  \end{matrix}
                  \right]U^*.      
\end{aligned}
\end{equation*}  
Hence,   
\begin{equation*}
\begin{aligned}
(A+E)^{{\tiny{\textcircled{\tiny \dag}}}}
&=U\left[\begin{matrix}
                  (T+E_{1})^{-1}& 0\\
                    0&0
                  \end{matrix}
                  \right]U^*\\
&=(I+A^{{\tiny{\textcircled{\tiny \dag}}}}E)^{-1}A^{{\tiny{\textcircled{\tiny \dag}}}}                  
\end{aligned}
\end{equation*}     
and 
\begin{equation*}
\begin{aligned}  
(A+E)^{{\tiny{\textcircled{\tiny \dag}}}} -A^{{\tiny{\textcircled{\tiny \dag}}}}=- A^{{\tiny{\textcircled{\tiny \dag}}}}E(A+E)^{{\tiny{\textcircled{\tiny \dag}}}}.
\end{aligned}
\end{equation*}  
Therefore, by Lemma \ref{1.2},  
$$\frac{\|(A+E)^{{\tiny{\textcircled{\tiny \dag}}}} -A^{{\tiny{\textcircled{\tiny \dag}}}}\|}{\|A^{{\tiny{\textcircled{\tiny \dag}}}}\|}\leq \frac{\|A^{{\tiny{\textcircled{\tiny \dag}}}}E\|}{1-\|A^{{\tiny{\textcircled{\tiny \dag}}}}E\|}.$$
It completes the proof.
\end{proof}

\begin{cor}Let $A,~E$ be as in Theorem \ref{theorem:3.4} and $\|A^{{\tiny{\textcircled{\tiny \dag}}}}\|\|E\|<1$. Then
  \begin{equation}\label{equation:3.3}
 \frac{\|(A+E)^{{\tiny{\textcircled{\tiny \dag}}}}-A^{{\tiny{\textcircled{\tiny \dag}}}}\|}{\|A^{{\tiny{\textcircled{\tiny \dag}}}}\|}\leq\frac{\kappa_{{\tiny{\textcircled{\tiny \dag}}}}(A)\|E\|\|A\|}{1-\kappa_{{\tiny{\textcircled{\tiny \dag}}}}(A)\|E\|\|A\|}.
 \end{equation}
 \end{cor}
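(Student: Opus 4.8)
The plan is to derive the Corollary directly from Theorem \ref{theorem:3.4} by replacing $\|A^{\tiny{\textcircled{\tiny \dag}}}E\|$ with the coarser but more transparent quantity $\|A^{\tiny{\textcircled{\tiny \dag}}}\|\|E\|$. First I would note that by submultiplicativity of the spectral norm,
\begin{equation*}
\|A^{\tiny{\textcircled{\tiny \dag}}}E\|\leq \|A^{\tiny{\textcircled{\tiny \dag}}}\|\|E\| = \kappa_{\tiny{\textcircled{\tiny \dag}}}(A)\|E\|\|A\|/\|A\|\cdot\|A\| ,
\end{equation*}
but more simply $\|A^{\tiny{\textcircled{\tiny \dag}}}E\|\leq \|A^{\tiny{\textcircled{\tiny \dag}}}\|\|E\| = \kappa_{\tiny{\textcircled{\tiny \dag}}}(A)\|E\|\|A\|$, using only the definition $\kappa_{\tiny{\textcircled{\tiny \dag}}}(A)=\|A\|\|A^{\tiny{\textcircled{\tiny \dag}}}\|$. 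Wait --- one must be careful here: $\|A^{\tiny{\textcircled{\tiny \dag}}}\|\|E\|$ and $\kappa_{\tiny{\textcircled{\tiny \dag}}}(A)\|E\|\|A\|$ are not equal; rather $\kappa_{\tiny{\textcircled{\tiny \dag}}}(A)\|E\| = \|A\|\|A^{\tiny{\textcircled{\tiny \dag}}}\|\|E\|\geq \|A^{\tiny{\textcircled{\tiny \dag}}}\|\|E\|$ precisely when $\|A\|\geq 1$. So the correct reading of the hypothesis $\|A^{\tiny{\textcircled{\tiny \dag}}}\|\|E\|<1$ together with the bound to be proved suggests that the intended chain is $\|A^{\tiny{\textcircled{\tiny \dag}}}E\|\leq\|A^{\tiny{\textcircled{\tiny \dag}}}\|\|E\|$, and then the right-hand side of (3.2) is increasing in its argument, so we may enlarge $\|A^{\tiny{\textcircled{\tiny \dag}}}E\|$ to $\|A^{\tiny{\textcircled{\tiny \dag}}}\|\|E\|$; the appearance of $\kappa_{\tiny{\textcircled{\tiny \dag}}}(A)\|E\|\|A\|$ in the statement is just the rewriting $\|A^{\tiny{\textcircled{\tiny \dag}}}\|\|E\| \le \|A\|\|A^{\tiny{\textcircled{\tiny \dag}}}\|\|E\|/1$ — no, that only holds if $\|A\|\ge 1$. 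I would therefore present the honest version: the displayed bound (\ref{equation:3.3}) as written requires either $\|A\|\geq 1$, or else should read with $\|A^{\tiny{\textcircled{\tiny \dag}}}\|\|E\|$ in place of $\kappa_{\tiny{\textcircled{\tiny \dag}}}(A)\|E\|\|A\|$; I will prove it in the form the authors intend, flagging the monotonicity step explicitly.

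The key steps, in order, are: (i) invoke Theorem \ref{theorem:3.4}, whose hypotheses $\mathcal{R}(E)\subseteq\mathcal{R}(A^k)$ and $\mathcal{N}(A^{k*})\subseteq\mathcal{N}(E)$ are inherited verbatim from ``Let $A,E$ be as in Theorem \ref{theorem:3.4}''; (ii) check that the new hypothesis $\|A^{\tiny{\textcircled{\tiny \dag}}}\|\|E\|<1$ implies the hypothesis $\|A^{\tiny{\textcircled{\tiny \dag}}}E\|<1$ needed in Theorem \ref{theorem:3.4}, via $\|A^{\tiny{\textcircled{\tiny \dag}}}E\|\leq\|A^{\tiny{\textcircled{\tiny \dag}}}\|\|E\|<1$; (iii) apply inequality (3.2) to get $\frac{\|(A+E)^{\tiny{\textcircled{\tiny \dag}}}-A^{\tiny{\textcircled{\tiny \dag}}}\|}{\|A^{\tiny{\textcircled{\tiny \dag}}}\|}\leq\frac{\|A^{\tiny{\textcircled{\tiny \dag}}}E\|}{1-\|A^{\tiny{\textcircled{\tiny \dag}}}E\|}$; (iv) observe that $t\mapsto \frac{t}{1-t}$ is strictly increasing on $[0,1)$, so substituting the upper bound $\|A^{\tiny{\textcircled{\tiny \dag}}}E\|\leq\|A^{\tiny{\textcircled{\tiny \dag}}}\|\|E\|$ yields $\frac{\|A^{\tiny{\textcircled{\tiny \dag}}}E\|}{1-\|A^{\tiny{\textcircled{\tiny \dag}}}E\|}\leq\frac{\|A^{\tiny{\textcircled{\tiny \dag}}}\|\|E\|}{1-\|A^{\tiny{\textcircled{\tiny \dag}}}\|\|E\|}$; and (v) rewrite $\|A^{\tiny{\textcircled{\tiny \dag}}}\|\|E\|$ in terms of the condition number, using $\|A^{\tiny{\textcircled{\tiny \dag}}}\| = \kappa_{\tiny{\textcircled{\tiny \dag}}}(A)/\|A\|$ when $A\neq 0$, which converts the bound into $\frac{\kappa_{\tiny{\textcircled{\tiny \dag}}}(A)\|E\|/\|A\|}{1-\kappa_{\tiny{\textcircled{\tiny \dag}}}(A)\|E\|/\|A\|}$. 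This is the bound $\frac{\kappa_{\tiny{\textcircled{\tiny \dag}}}(A)\|E\|\|A\|}{1-\kappa_{\tiny{\textcircled{\tiny \dag}}}(A)\|E\|\|A\|}$ stated in (\ref{equation:3.3}) only up to the $\|A\|^{\pm 1}$ discrepancy; I would write it in the self-consistent form $\frac{\kappa_{\tiny{\textcircled{\tiny \dag}}}(A)\|E\|/\|A\|}{1-\kappa_{\tiny{\textcircled{\tiny \dag}}}(A)\|E\|/\|A\|}$ and remark that this equals the published expression when $\|A\|=1$ (equivalently, after normalizing $A$), which loses no generality for a relative bound.

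The main obstacle is not analytical depth — the argument is two lines of monotonicity plus submultiplicativity — but bookkeeping with the condition number: making sure the substitution $\|A^{\tiny{\textcircled{\tiny \dag}}}E\|\rightsquigarrow\|A^{\tiny{\textcircled{\tiny \dag}}}\|\|E\|$ is done in the correct direction (enlarging the numerator's argument, which is legitimate precisely because $t/(1-t)$ is increasing and because the hypothesis $\|A^{\tiny{\textcircled{\tiny \dag}}}\|\|E\|<1$ keeps us inside the domain $[0,1)$ where this monotonicity and the positivity of $1-t$ both hold), and reconciling the $\|A\|$ factors between $\|A^{\tiny{\textcircled{\tiny \dag}}}\|\|E\|$ and $\kappa_{\tiny{\textcircled{\tiny \dag}}}(A)\|E\|\|A\|$. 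I would keep the proof to three or four lines, citing Theorem \ref{theorem:3.4} for (3.1)--(3.2) and Lemma \ref{1.2} only implicitly through that theorem.

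\begin{proof}
By hypothesis $A,E$ satisfy $\mathcal{R}(E)\subseteq\mathcal{R}(A^k)$ and $\mathcal{N}(A^{k*})\subseteq\mathcal{N}(E)$, where $k=\mathrm{ind}(A)$. Since the spectral norm is submultiplicative,
\begin{equation*}
\|A^{\tiny{\textcircled{\tiny \dag}}}E\|\leq\|A^{\tiny{\textcircled{\tiny \dag}}}\|\|E\|<1,
\end{equation*}
so the hypotheses of Theorem \ref{theorem:3.4} are met, and (3.2) gives
\begin{equation*}
\frac{\|(A+E)^{\tiny{\textcircled{\tiny \dag}}}-A^{\tiny{\textcircled{\tiny \dag}}}\|}{\|A^{\tiny{\textcircled{\tiny \dag}}}\|}\leq\frac{\|A^{\tiny{\textcircled{\tiny \dag}}}E\|}{1-\|A^{\tiny{\textcircled{\tiny \dag}}}E\|}.
\end{equation*}
The function $t\mapsto t/(1-t)$ is increasing on $[0,1)$, and $0\leq\|A^{\tiny{\textcircled{\tiny \dag}}}E\|\leq\|A^{\tiny{\textcircled{\tiny \dag}}}\|\|E\|<1$, so
\begin{equation*}
\frac{\|A^{\tiny{\textcircled{\tiny \dag}}}E\|}{1-\|A^{\tiny{\textcircled{\tiny \dag}}}E\|}\leq\frac{\|A^{\tiny{\textcircled{\tiny \dag}}}\|\|E\|}{1-\|A^{\tiny{\textcircled{\tiny \dag}}}\|\|E\|}.
\end{equation*}
Finally, writing $\|A^{\tiny{\textcircled{\tiny \dag}}}\|=\kappa_{\tiny{\textcircled{\tiny \dag}}}(A)/\|A\|$ yields the asserted bound (here the right-hand side of (\ref{equation:3.3}) is understood with the factor $\kappa_{\tiny{\textcircled{\tiny \dag}}}(A)\|E\|/\|A\|$; this coincides with $\kappa_{\tiny{\textcircled{\tiny \dag}}}(A)\|E\|\|A\|$ after normalizing $\|A\|=1$, which is harmless for a relative bound). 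This completes the proof.
\end{proof}
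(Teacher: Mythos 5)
Your proof is correct and is exactly the argument the paper intends (it leaves this corollary unproved as an immediate consequence of Theorem \ref{theorem:3.4}): submultiplicativity gives $\|A^{{\tiny{\textcircled{\tiny \dag}}}}E\|\leq\|A^{{\tiny{\textcircled{\tiny \dag}}}}\|\|E\|<1$, and monotonicity of $t\mapsto t/(1-t)$ on $[0,1)$ transfers the bound of (3.2). Your flag of the misprint is also right: since $\kappa_{{\tiny{\textcircled{\tiny \dag}}}}(A)=\|A\|\|A^{{\tiny{\textcircled{\tiny \dag}}}}\|$, the right-hand side of (\ref{equation:3.3}) should read $\kappa_{{\tiny{\textcircled{\tiny \dag}}}}(A)\|E\|/\|A\|$ rather than $\kappa_{{\tiny{\textcircled{\tiny \dag}}}}(A)\|E\|\|A\|$, consistent with the hypothesis $\|A^{{\tiny{\textcircled{\tiny \dag}}}}\|\|E\|<1$, with the analogous bound (3.6), and with the classical bounds the paper cites.
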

The bound $(3.3)$ is perfectly analogous to the bounds for the Drazin inverse in \cite{W1997,C2000}, the Moore-Penrose inverse and the ordinary inverse in \cite{S1969}.

 

In the following, a sufficient condition for the continuity of the core-EP inverse is derived as a corollary.
\begin{cor}~Let $A\in \mathbb{C}^{n\times n}$ and let $\{E_j\}$ be a sequence of $n\times n$ matrices   such that $\|E_j\|\rightarrow 0$. If there exists a positive integer $j_0$ such that   
$E_j=E_jAA^{{\tiny{\textcircled{\tiny \dag}}}}=A^{{\tiny{\textcircled{\tiny \dag}}}}AE_j$ for $j\geq j_0$,
then $(A+E_j)^{{\tiny{\textcircled{\tiny \dag}}}}\rightarrow A^{{\tiny{\textcircled{\tiny \dag}}}}.$
\end{cor}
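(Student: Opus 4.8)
The plan is to derive this corollary directly from Theorem \ref{theorem:3.4} together with its preceding Corollary. First I would observe that the hypothesis $E_j = E_jAA^{{\tiny{\textcircled{\tiny \dag}}}} = A^{{\tiny{\textcircled{\tiny \dag}}}}AE_j$ is precisely condition (4) of the introduction, which is stated there to be equivalent to condition (1), namely $\mathcal{R}(E_j)\subseteq\mathcal{R}(A^k)$ and $\mathcal{N}(A^{k*})\subseteq\mathcal{N}(E_j)$, where $k=\mathrm{ind}(A)$. So for every $j\geq j_0$ the pair $(A, E_j)$ satisfies the range/null-space hypotheses of Theorem \ref{theorem:3.4}.

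Next I would handle the remaining smallness hypothesis. Since $\|E_j\|\to 0$ and $\|A^{{\tiny{\textcircled{\tiny \dag}}}}\|$ is a fixed constant, we have $\|A^{{\tiny{\textcircled{\tiny \dag}}}}E_j\| \leq \|A^{{\tiny{\textcircled{\tiny \dag}}}}\|\,\|E_j\| \to 0$; hence there is $j_1\geq j_0$ such that $\|A^{{\tiny{\textcircled{\tiny \dag}}}}E_j\| < 1$ for all $j\geq j_1$. For such $j$, Theorem \ref{theorem:3.4} applies and gives
\begin{equation*}
\frac{\|(A+E_j)^{{\tiny{\textcircled{\tiny \dag}}}}-A^{{\tiny{\textcircled{\tiny \dag}}}}\|}{\|A^{{\tiny{\textcircled{\tiny \dag}}}}\|}\leq\frac{\|A^{{\tiny{\textcircled{\tiny \dag}}}}E_j\|}{1-\|A^{{\tiny{\textcircled{\tiny \dag}}}}E_j\|}.
\end{equation*}
(If $A^{{\tiny{\textcircled{\tiny \dag}}}}=0$, i.e. $A$ is nilpotent, then condition (4) forces $E_j=0$ for $j\geq j_0$ and the conclusion is trivial, so I may assume $\|A^{{\tiny{\textcircled{\tiny \dag}}}}\|\neq 0$ and divide by it.)

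Finally I would let $j\to\infty$: the right-hand side tends to $0$ because $\|A^{{\tiny{\textcircled{\tiny \dag}}}}E_j\|\to 0$, so $\|(A+E_j)^{{\tiny{\textcircled{\tiny \dag}}}}-A^{{\tiny{\textcircled{\tiny \dag}}}}\| \to 0$, which is exactly $(A+E_j)^{{\tiny{\textcircled{\tiny \dag}}}}\to A^{{\tiny{\textcircled{\tiny \dag}}}}$. There is no real obstacle here; the only point requiring a word of care is the equivalence "(1) $\Leftrightarrow$ (4)" invoked in the first step, which is asserted in the introduction of the paper and can be taken as given, and the degenerate nilpotent case noted above. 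One could alternatively phrase the whole argument through the Corollary with bound \eqref{equation:3.3}, choosing $j_1$ so that $\kappa_{{\tiny{\textcircled{\tiny \dag}}}}(A)\|E_j\|\|A\|<1$, but the version above is the most economical.
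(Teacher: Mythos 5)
Your proposal is correct and follows essentially the same route the paper intends: invoke the equivalence of $E_j=E_jAA^{{\tiny{\textcircled{\tiny \dag}}}}=A^{{\tiny{\textcircled{\tiny \dag}}}}AE_j$ with the range/null-space hypotheses, note $\|A^{{\tiny{\textcircled{\tiny \dag}}}}E_j\|\le\|A^{{\tiny{\textcircled{\tiny \dag}}}}\|\|E_j\|\to 0$ so Theorem \ref{theorem:3.4} applies for large $j$, and let the bound $(3.2)$ tend to zero. Your additional remark on the nilpotent case $A^{{\tiny{\textcircled{\tiny \dag}}}}=0$ is a harmless refinement the paper leaves implicit.
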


\begin{rem}~If $\mathrm{ind}(A)=1$, then the condition of Theorem \ref{theorem:3.4} is reduced to $E=EAA^{{\tiny{\textcircled{\tiny \#}}}}=A^{{\tiny{\textcircled{\tiny \#}}}}AE$
 and 
$\|A^{{\tiny{\textcircled{\tiny \#}}}}E\|<1.$ Thus, under these assumptions,  perturbation bounds for the core inverse are obtained.
\end{rem}

\subsection{The case: $AA^{{\tiny{\textcircled{\tiny \dag}}}}=(A+E)(A+E)^{{\tiny{\textcircled{\tiny \dag}}}}$ and~$A^{{\tiny{\textcircled{\tiny \dag}}}}A=(A+E)^{{\tiny{\textcircled{\tiny \dag}}}}(A+E)$}
In this part, perturbation bounds for the core-EP inverse are investigated
 under the assumption that $AA^{{\tiny{\textcircled{\tiny \dag}}}}=(A+E)(A+E)^{{\tiny{\textcircled{\tiny \dag}}}}$,~$A^{{\tiny{\textcircled{\tiny \dag}}}}A=(A+E)^{{\tiny{\textcircled{\tiny \dag}}}}(A+E)$. A sufficient condition for which the operation of the core-EP inverse is a continuous function  is derived as a corollary.
 \begin{thm}\label{3.1} Let $A,~E\in \mathbb{C}^{n\times n}$ such that
 $AA^{{\tiny{\textcircled{\tiny \dag}}}}=(A+E)(A+E)^{{\tiny{\textcircled{\tiny \dag}}}},~A^{{\tiny{\textcircled{\tiny \dag}}}}A=(A+E)^{{\tiny{\textcircled{\tiny \dag}}}}(A+E)$ and 
$
 \|A^{{\tiny{\textcircled{\tiny \dag}}}}E\|<1.
$
 Then 
 \begin{equation}
 \|(A+E)^{{\tiny{\textcircled{\tiny \dag}}}}\|\leq\frac{\|A^{{\tiny{\textcircled{\tiny \dag}}}}\|}{1-\|A^{{\tiny{\textcircled{\tiny \dag}}}}E\|}
 \end{equation}
and 
\begin{equation}
\frac{\|(A+E)^{{\tiny{\textcircled{\tiny \dag}}}}-A^{{\tiny{\textcircled{\tiny \dag}}}}\|}{\|A^{{\tiny{\textcircled{\tiny \dag}}}}\|}\leq \frac{\|A^{{\tiny{\textcircled{\tiny \dag}}}}E\|}{1-\|A^{{\tiny{\textcircled{\tiny \dag}}}}E\|}.
\end{equation}
\end{thm}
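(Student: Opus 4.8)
The plan is to mimic the structure of the proof of Theorem \ref{theorem:3.4}, exploiting the core-EP decomposition together with the extra hypotheses $AA^{{\tiny{\textcircled{\tiny \dag}}}}=(A+E)(A+E)^{{\tiny{\textcircled{\tiny \dag}}}}$ and $A^{{\tiny{\textcircled{\tiny \dag}}}}A=(A+E)^{{\tiny{\textcircled{\tiny \dag}}}}(A+E)$. First I would write $A=U\left[\begin{smallmatrix}T&S\\0&N\end{smallmatrix}\right]U^*$ with $T$ non-singular and $N$ nilpotent, so that $A^{{\tiny{\textcircled{\tiny \dag}}}}=U\left[\begin{smallmatrix}T^{-1}&0\\0&0\end{smallmatrix}\right]U^*$ and hence $AA^{{\tiny{\textcircled{\tiny \dag}}}}=U\left[\begin{smallmatrix}I&0\\0&0\end{smallmatrix}\right]U^*$ is the orthogonal projector onto $\mathcal{R}(A^k)$, while $A^{{\tiny{\textcircled{\tiny \dag}}}}A=U\left[\begin{smallmatrix}I&T^{-1}S\\0&0\end{smallmatrix}\right]U^*$. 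The key observation is that $AA^{{\tiny{\textcircled{\tiny \dag}}}}$ is idempotent with $\mathcal{R}(AA^{{\tiny{\textcircled{\tiny \dag}}}})=\mathcal{R}(A^k)$, so the equality $AA^{{\tiny{\textcircled{\tiny \dag}}}}=(A+E)(A+E)^{{\tiny{\textcircled{\tiny \dag}}}}$ forces $(A+E)(A+E)^{{\tiny{\textcircled{\tiny \dag}}}}$ to be the \emph{same} projector; since $(A+E)(A+E)^{{\tiny{\textcircled{\tiny \dag}}}}(A+E)=A+E$ (as the core-EP inverse is an outer inverse and $(A+E)(A+E)^{{\tiny{\textcircled{\tiny \dag}}}}$ absorbs $A+E$ on the left — this follows from $XA^{k+1}=A^k$ and $AX^2=X$ in the defining system (1.1)), we get $AA^{{\tiny{\textcircled{\tiny \dag}}}}(A+E)=A+E$, i.e. $AA^{{\tiny{\textcircled{\tiny \dag}}}}E=E$. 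Dually, $A^{{\tiny{\textcircled{\tiny \dag}}}}A=(A+E)^{{\tiny{\textcircled{\tiny \dag}}}}(A+E)$ combined with $(A+E)(A+E)^{{\tiny{\textcircled{\tiny \dag}}}}(A+E)=A+E$ gives $(A+E)A^{{\tiny{\textcircled{\tiny \dag}}}}A=A+E$, hence $EA^{{\tiny{\textcircled{\tiny \dag}}}}A=E$.

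Next I would feed these two identities $E=AA^{{\tiny{\textcircled{\tiny \dag}}}}E$ and $E=EA^{{\tiny{\textcircled{\tiny \dag}}}}A$ into the block form. Writing $E=U\left[\begin{smallmatrix}E_1&E_2\\E_3&E_4\end{smallmatrix}\right]U^*$, the relation $E=AA^{{\tiny{\textcircled{\tiny \dag}}}}E$ kills the bottom row ($E_3=0$, $E_4=0$), exactly as in Theorem \ref{theorem:3.4}. The relation $E=EA^{{\tiny{\textcircled{\tiny \dag}}}}A$ then forces $E\left[\begin{smallmatrix}I&T^{-1}S\\0&0\end{smallmatrix}\right]=E$, and since the top row of $E$ is $[E_1\ E_2]$ this yields $E_1=E_1$ (no info) and $E_2=E_1T^{-1}S$. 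Therefore $A+E=U\left[\begin{smallmatrix}T+E_1&S+E_1T^{-1}S\\0&N\end{smallmatrix}\right]U^*=U\left[\begin{smallmatrix}(T+E_1)& (T+E_1)T^{-1}S\\0&N\end{smallmatrix}\right]U^*$. Now $\|A^{{\tiny{\textcircled{\tiny \dag}}}}E\|<1$ gives $\|T^{-1}E_1\|<1$, so $T+E_1=T(I+T^{-1}E_1)$ is non-singular; consequently $A+E$ still has the core-EP block structure with non-singular $(1,1)$-block and nilpotent $N$, so its core-EP inverse is $(A+E)^{{\tiny{\textcircled{\tiny \dag}}}}=U\left[\begin{smallmatrix}(T+E_1)^{-1}&0\\0&0\end{smallmatrix}\right]U^*=(I+A^{{\tiny{\textcircled{\tiny \dag}}}}E)^{-1}A^{{\tiny{\textcircled{\tiny \dag}}}}$, using $A^{{\tiny{\textcircled{\tiny \dag}}}}E=U\left[\begin{smallmatrix}T^{-1}E_1&T^{-1}E_2\\0&0\end{smallmatrix}\right]U^*$ and that the $(2,2)$ and lower blocks of $I+A^{{\tiny{\textcircled{\tiny \dag}}}}E$ act trivially against $A^{{\tiny{\textcircled{\tiny \dag}}}}$.

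From the closed form $(A+E)^{{\tiny{\textcircled{\tiny \dag}}}}=(I+A^{{\tiny{\textcircled{\tiny \dag}}}}E)^{-1}A^{{\tiny{\textcircled{\tiny \dag}}}}$ the two estimates are immediate. For (3.4), apply Lemma \ref{1.2}: $\|(A+E)^{{\tiny{\textcircled{\tiny \dag}}}}\|\leq\|(I+A^{{\tiny{\textcircled{\tiny \dag}}}}E)^{-1}\|\|A^{{\tiny{\textcircled{\tiny \dag}}}}\|\leq\|A^{{\tiny{\textcircled{\tiny \dag}}}}\|/(1-\|A^{{\tiny{\textcircled{\tiny \dag}}}}E\|)$. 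For (3.5), rewrite $(A+E)^{{\tiny{\textcircled{\tiny \dag}}}}-A^{{\tiny{\textcircled{\tiny \dag}}}}=\big[(I+A^{{\tiny{\textcircled{\tiny \dag}}}}E)^{-1}-I\big]A^{{\tiny{\textcircled{\tiny \dag}}}}=-(I+A^{{\tiny{\textcircled{\tiny \dag}}}}E)^{-1}A^{{\tiny{\textcircled{\tiny \dag}}}}E\,A^{{\tiny{\textcircled{\tiny \dag}}}}=-A^{{\tiny{\textcircled{\tiny \dag}}}}E(A+E)^{{\tiny{\textcircled{\tiny \dag}}}}$, take norms and use (3.4): $\|(A+E)^{{\tiny{\textcircled{\tiny \dag}}}}-A^{{\tiny{\textcircled{\tiny \dag}}}}\|\leq\|A^{{\tiny{\textcircled{\tiny \dag}}}}E\|\cdot\|A^{{\tiny{\textcircled{\tiny \dag}}}}\|/(1-\|A^{{\tiny{\textcircled{\tiny \dag}}}}E\|)$, then divide by $\|A^{{\tiny{\textcircled{\tiny \dag}}}}\|$. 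I expect the main obstacle to be the first paragraph: carefully justifying that the projector equalities translate into the absorption identities $AA^{{\tiny{\textcircled{\tiny \dag}}}}E=E$ and $EA^{{\tiny{\textcircled{\tiny \dag}}}}A=E$ — one must invoke that $(A+E)(A+E)^{{\tiny{\textcircled{\tiny \dag}}}}$ left-absorbs $A+E$ and $(A+E)^{{\tiny{\textcircled{\tiny \dag}}}}(A+E)$ right-absorbs $A+E$, which comes from the system (1.1) and Lemma \ref{1.1}(1); once these are in hand the block computation and the norm estimates are routine.
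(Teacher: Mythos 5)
Your argument breaks down at its very first step. You assert that $(A+E)(A+E)^{{\tiny{\textcircled{\tiny \dag}}}}(A+E)=A+E$, claiming this follows from the defining system (1.1); it does not. The core-EP inverse is an outer inverse but \emph{not} an inner inverse (the paper itself stresses this at the start of Section 2.2): writing $B=U\left[\begin{smallmatrix}T&S\\0&N\end{smallmatrix}\right]U^*$ one gets $BB^{{\tiny{\textcircled{\tiny \dag}}}}B=U\left[\begin{smallmatrix}T&S\\0&0\end{smallmatrix}\right]U^*$, which equals $B$ only when $N=0$, i.e.\ $\mathrm{ind}(B)\leq 1$. Consequently the absorption identities you deduce from the projector equalities, $AA^{{\tiny{\textcircled{\tiny \dag}}}}E=E$ and $EA^{{\tiny{\textcircled{\tiny \dag}}}}A=E$, are simply false under the hypotheses of Theorem \ref{3.1}. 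The paper's Example \ref{1.4} is exactly a counterexample: there $AA^{{\tiny{\textcircled{\tiny \dag}}}}=(A+E)(A+E)^{{\tiny{\textcircled{\tiny \dag}}}}$ and $A^{{\tiny{\textcircled{\tiny \dag}}}}A=(A+E)^{{\tiny{\textcircled{\tiny \dag}}}}(A+E)$ hold, yet $AA^{{\tiny{\textcircled{\tiny \dag}}}}E\neq E$ (indeed the whole point of that example is that condition (2) of the introduction does not imply condition (1)). In that same example your intermediate block claims ($E_3=E_4=0$, $E_2=E_1T^{-1}S$, and $A+E$ inheriting the core-EP block structure of $A$) all fail, since $E_4\neq 0$. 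So the block-decomposition route, which works for Theorem \ref{theorem:3.4} precisely because there $E=A^{{\tiny{\textcircled{\tiny \dag}}}}AE=EAA^{{\tiny{\textcircled{\tiny \dag}}}}$ is \emph{assumed}, cannot be imported here.

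The correct argument, and the one the paper uses, needs only the outer-inverse property. From $A^{{\tiny{\textcircled{\tiny \dag}}}}A=(A+E)^{{\tiny{\textcircled{\tiny \dag}}}}(A+E)$ and $AA^{{\tiny{\textcircled{\tiny \dag}}}}=(A+E)(A+E)^{{\tiny{\textcircled{\tiny \dag}}}}$ one computes
\begin{equation*}
A^{{\tiny{\textcircled{\tiny \dag}}}}[A-(A+E)](A+E)^{{\tiny{\textcircled{\tiny \dag}}}}
=(A+E)^{{\tiny{\textcircled{\tiny \dag}}}}(A+E)(A+E)^{{\tiny{\textcircled{\tiny \dag}}}}-A^{{\tiny{\textcircled{\tiny \dag}}}}AA^{{\tiny{\textcircled{\tiny \dag}}}}
=(A+E)^{{\tiny{\textcircled{\tiny \dag}}}}-A^{{\tiny{\textcircled{\tiny \dag}}}},
\end{equation*}
i.e.\ $(A+E)^{{\tiny{\textcircled{\tiny \dag}}}}-A^{{\tiny{\textcircled{\tiny \dag}}}}=-A^{{\tiny{\textcircled{\tiny \dag}}}}E(A+E)^{{\tiny{\textcircled{\tiny \dag}}}}$, and then (3.4) and (3.5) follow by taking norms exactly as in your final paragraph (your norm manipulations there are fine, and the closed form $(A+E)^{{\tiny{\textcircled{\tiny \dag}}}}=(I+A^{{\tiny{\textcircled{\tiny \dag}}}}E)^{-1}A^{{\tiny{\textcircled{\tiny \dag}}}}$ does follow from this identity when $\|A^{{\tiny{\textcircled{\tiny \dag}}}}E\|<1$). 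But as written, your proof rests on a property the core-EP inverse does not have, so the key identity is never legitimately established.
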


\begin{proof}~Since  $AA^{{\tiny{\textcircled{\tiny \dag}}}}=(A+E)(A+E)^{{\tiny{\textcircled{\tiny \dag}}}}~\text{and}~A^{{\tiny{\textcircled{\tiny \dag}}}}A=(A+E)^{{\tiny{\textcircled{\tiny \dag}}}}(A+E)$, then $$(A+E)^{{\tiny{\textcircled{\tiny \dag}}}}-A^{{\tiny{\textcircled{\tiny \dag}}}}=A^{{\tiny{\textcircled{\tiny \dag}}}}[A-(A+E)](A+E)^{{\tiny{\textcircled{\tiny \dag}}}}.$$ Thus, 
$(A+E)^{{\tiny{\textcircled{\tiny \dag}}}}=A^{{\tiny{\textcircled{\tiny \dag}}}}-A^{{\tiny{\textcircled{\tiny \dag}}}}E(A+E)^{{\tiny{\textcircled{\tiny \dag}}}}.$
Applying the  norm $\|\cdot\|$,   $$\|(A+E)^{{\tiny{\textcircled{\tiny \dag}}}}\|\leq\|A^{{\tiny{\textcircled{\tiny \dag}}}}\|+\|A^{{\tiny{\textcircled{\tiny \dag}}}}E\|\|(A+E)^{{\tiny{\textcircled{\tiny \dag}}}}\|.$$ Hence (3.4) is obtained since $
 \|A^{{\tiny{\textcircled{\tiny \dag}}}}E\|<1.
$

Again from $(A+E)^{{\tiny{\textcircled{\tiny \dag}}}}-A^{{\tiny{\textcircled{\tiny \dag}}}}=A^{{\tiny{\textcircled{\tiny \dag}}}}[A-(A+E)](A+E)^{{\tiny{\textcircled{\tiny \dag}}}}$, it follows that $$(A+E)^{{\tiny{\textcircled{\tiny \dag}}}}-A^{{\tiny{\textcircled{\tiny \dag}}}}=-A^{{\tiny{\textcircled{\tiny \dag}}}}E[A^{{\tiny{\textcircled{\tiny \dag}}}}+(A+E)^{{\tiny{\textcircled{\tiny \dag}}}}-A^{{\tiny{\textcircled{\tiny \dag}}}}].$$ Applying the norm  $\|\cdot\|$, 
$$\|(A+E)^{{\tiny{\textcircled{\tiny \dag}}}}-A^{{\tiny{\textcircled{\tiny \dag}}}}\|\leq\|A^{{\tiny{\textcircled{\tiny \dag}}}}E\|[\|A^{{\tiny{\textcircled{\tiny \dag}}}}\|+\|(A+E)^{{\tiny{\textcircled{\tiny \dag}}}}-A^{{\tiny{\textcircled{\tiny \dag}}}}\|].$$
Since  
$
 \|A^{{\tiny{\textcircled{\tiny \dag}}}}E\|<1
$, then (3.5) is derived. 

\end{proof}

\begin{cor}~Let $A,~E$ be as in Theorem \ref{3.1}
and $\|A^{{\tiny{\textcircled{\tiny \dag}}}}\|\|E\|< 1$. Then
\begin{equation}
\frac{\|(A+E)^{{\tiny{\textcircled{\tiny \dag}}}}-A^{{\tiny{\textcircled{\tiny \dag}}}}\|}{\|A^{{\tiny{\textcircled{\tiny \dag}}}}\|}\leq \frac{\kappa_{{\tiny{\textcircled{\tiny \dag}}}}(A)\|E\|/\|A\|}{1-\kappa_{{\tiny{\textcircled{\tiny \dag}}}}(A)\|E\|/\|A\|}.
\end{equation} 
\end{cor}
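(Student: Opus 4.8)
The plan is to derive the corollary directly from Theorem~\ref{3.1} by substituting the crude estimate $\|A^{{\tiny{\textcircled{\tiny \dag}}}}E\|\le\|A^{{\tiny{\textcircled{\tiny \dag}}}}\|\,\|E\|$ into bound~(3.5) and then rewriting the resulting fraction in terms of the core-EP condition number $\kappa_{{\tiny{\textcircled{\tiny \dag}}}}(A)=\|A\|\,\|A^{{\tiny{\textcircled{\tiny \dag}}}}\|$. First I would observe that the hypothesis $\|A^{{\tiny{\textcircled{\tiny \dag}}}}\|\,\|E\|<1$ together with submultiplicativity of the spectral norm forces $\|A^{{\tiny{\textcircled{\tiny \dag}}}}E\|<1$, so Theorem~\ref{3.1} applies and (3.5) holds. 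Next, since the map $t\mapsto t/(1-t)$ is monotonically increasing on $[0,1)$, replacing $\|A^{{\tiny{\textcircled{\tiny \dag}}}}E\|$ by the larger quantity $\|A^{{\tiny{\textcircled{\tiny \dag}}}}\|\,\|E\|$ in (3.5) only enlarges the right-hand side, giving
\[
\frac{\|(A+E)^{{\tiny{\textcircled{\tiny \dag}}}}-A^{{\tiny{\textcircled{\tiny \dag}}}}\|}{\|A^{{\tiny{\textcircled{\tiny \dag}}}}\|}\leq\frac{\|A^{{\tiny{\textcircled{\tiny \dag}}}}\|\,\|E\|}{1-\|A^{{\tiny{\textcircled{\tiny \dag}}}}\|\,\|E\|}.
\]
Finally I would multiply numerator and denominator of this fraction by $\|A\|/\|A\|$ and use $\|A\|\,\|A^{{\tiny{\textcircled{\tiny \dag}}}}\|=\kappa_{{\tiny{\textcircled{\tiny \dag}}}}(A)$ to rewrite $\|A^{{\tiny{\textcircled{\tiny \dag}}}}\|\,\|E\|=\kappa_{{\tiny{\textcircled{\tiny \dag}}}}(A)\|E\|/\|A\|$, which yields (3.6) exactly.

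There is essentially no obstacle here: the argument is a one-line monotonicity estimate followed by an algebraic rewriting, entirely parallel to the derivation of Corollary~(3.3) from Theorem~\ref{theorem:3.4}. The only point requiring a sentence of care is checking that $\|A^{{\tiny{\textcircled{\tiny \dag}}}}\|\,\|E\|<1$ indeed implies the hypothesis $\|A^{{\tiny{\textcircled{\tiny \dag}}}}E\|<1$ of Theorem~\ref{3.1}, so that the theorem is genuinely in force before we weaken its conclusion; this is immediate from $\|A^{{\tiny{\textcircled{\tiny \dag}}}}E\|\le\|A^{{\tiny{\textcircled{\tiny \dag}}}}\|\,\|E\|$. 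I would also note in passing that $\|A\|\neq 0$ is implicit (otherwise $A=0$ and the statement is vacuous), so dividing by $\|A\|$ is legitimate.
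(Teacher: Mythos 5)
Your proposal is correct and is exactly the intended derivation: the paper states this corollary without proof, and it follows from Theorem \ref{3.1} precisely as you describe, by bounding $\|A^{{\tiny{\textcircled{\tiny \dag}}}}E\|\le\|A^{{\tiny{\textcircled{\tiny \dag}}}}\|\|E\|$, using monotonicity of $t\mapsto t/(1-t)$ on $[0,1)$, and rewriting $\|A^{{\tiny{\textcircled{\tiny \dag}}}}\|\|E\|=\kappa_{{\tiny{\textcircled{\tiny \dag}}}}(A)\|E\|/\|A\|$. Your remark that $\|A^{{\tiny{\textcircled{\tiny \dag}}}}\|\|E\|<1$ guarantees the hypothesis $\|A^{{\tiny{\textcircled{\tiny \dag}}}}E\|<1$ of the theorem is the right point of care and settles the matter.
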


From Theorem \ref{3.1}, we derive a
sufficient condition  for the continuity of the core-EP inverse, as follows.
\begin{cor}~Let $A\in \mathbb{C}^{n\times n}$ and let $\{E_j\}$ be a sequence of $n\times n$ matrices   such that $\|E_j\|\rightarrow 0$. If there exists a positive integer $j_0$ such that   
$AA^{{\tiny{\textcircled{\tiny \dag}}}}=(A+E_j)(A+E_j)^{{\tiny{\textcircled{\tiny \dag}}}}$ and~$A^{{\tiny{\textcircled{\tiny \dag}}}}A=(A+E_j)^{{\tiny{\textcircled{\tiny \dag}}}}(A+E_j)$ for $j\geq j_0$,
then $(A+E_j)^{{\tiny{\textcircled{\tiny \dag}}}}\rightarrow A^{{\tiny{\textcircled{\tiny \dag}}}}.$
\end{cor}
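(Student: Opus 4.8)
The plan is to derive this corollary directly from Theorem \ref{3.1}, which is the natural strategy given that the statement is phrased as a ``sufficient condition for continuity.'' First I would observe that since $\|E_j\|\to 0$, there exists $j_1\geq j_0$ such that $\|A^{{\tiny{\textcircled{\tiny \dag}}}}E_j\|\leq \|A^{{\tiny{\textcircled{\tiny \dag}}}}\|\,\|E_j\|<1$ for all $j\geq j_1$; this is the only quantitative ingredient needed, and it is immediate from submultiplicativity of the spectral norm. For $j\geq j_1$ the hypotheses of Theorem \ref{3.1} are satisfied with $E=E_j$: the range/projection identities $AA^{{\tiny{\textcircled{\tiny \dag}}}}=(A+E_j)(A+E_j)^{{\tiny{\textcircled{\tiny \dag}}}}$ and $A^{{\tiny{\textcircled{\tiny \dag}}}}A=(A+E_j)^{{\tiny{\textcircled{\tiny \dag}}}}(A+E_j)$ hold by assumption for $j\geq j_0$, and $\|A^{{\tiny{\textcircled{\tiny \dag}}}}E_j\|<1$ holds for $j\geq j_1$.

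Next I would invoke inequality (3.5) of Theorem \ref{3.1}, which gives
\begin{equation*}
\frac{\|(A+E_j)^{{\tiny{\textcircled{\tiny \dag}}}}-A^{{\tiny{\textcircled{\tiny \dag}}}}\|}{\|A^{{\tiny{\textcircled{\tiny \dag}}}}\|}\leq \frac{\|A^{{\tiny{\textcircled{\tiny \dag}}}}E_j\|}{1-\|A^{{\tiny{\textcircled{\tiny \dag}}}}E_j\|}
\end{equation*}
for $j\geq j_1$. Since $\|A^{{\tiny{\textcircled{\tiny \dag}}}}E_j\|\to 0$ as $j\to\infty$, the right-hand side tends to $0$, and hence $\|(A+E_j)^{{\tiny{\textcircled{\tiny \dag}}}}-A^{{\tiny{\textcircled{\tiny \dag}}}}\|\to 0$, i.e. $(A+E_j)^{{\tiny{\textcircled{\tiny \dag}}}}\to A^{{\tiny{\textcircled{\tiny \dag}}}}$. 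One should handle the degenerate case $A^{{\tiny{\textcircled{\tiny \dag}}}}=0$ separately (or note that then the projection hypotheses force $(A+E_j)^{{\tiny{\textcircled{\tiny \dag}}}}(A+E_j)=0=(A+E_j)(A+E_j)^{{\tiny{\textcircled{\tiny \dag}}}}$, whence $(A+E_j)^{{\tiny{\textcircled{\tiny \dag}}}}=(A+E_j)^{{\tiny{\textcircled{\tiny \dag}}}}(A+E_j)(A+E_j)^{{\tiny{\textcircled{\tiny \dag}}}}=0$, so convergence is trivial); alternatively one can just use the absolute bound $\|(A+E_j)^{{\tiny{\textcircled{\tiny \dag}}}}-A^{{\tiny{\textcircled{\tiny \dag}}}}\|\leq \|A^{{\tiny{\textcircled{\tiny \dag}}}}\|\,\|A^{{\tiny{\textcircled{\tiny \dag}}}}E_j\|/(1-\|A^{{\tiny{\textcircled{\tiny \dag}}}}E_j\|)$ which is valid regardless.

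I do not expect any serious obstacle here: the corollary is essentially a limiting reformulation of Theorem \ref{3.1}, and the proof is a two-line argument once the threshold index $j_1$ absorbing both $j_0$ and the condition $\|A^{{\tiny{\textcircled{\tiny \dag}}}}E_j\|<1$ is introduced. The only point requiring a modicum of care is bookkeeping the two indices $j_0$ (from the hypothesis) and $j_1$ (from $\|E_j\|\to 0$) and taking their maximum, plus the harmless edge case $A^{{\tiny{\textcircled{\tiny \dag}}}}=0$ mentioned above. No matrix decomposition or rank computation is needed, in contrast to the proofs in Section 2.
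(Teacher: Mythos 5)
Your proof is correct and follows exactly the route the paper intends: the corollary is an immediate consequence of the bound (3.5) in Theorem \ref{3.1}, applied with $E=E_j$ once $j$ is large enough that $\|A^{{\tiny{\textcircled{\tiny \dag}}}}E_j\|\leq\|A^{{\tiny{\textcircled{\tiny \dag}}}}\|\,\|E_j\|<1$. Your extra remarks on the index bookkeeping and the trivial case $A^{{\tiny{\textcircled{\tiny \dag}}}}=0$ are fine but not needed beyond that.
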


\begin{rem}~If $\mathrm{ind}(A)=1$, then the condition of Theorem \ref{3.1} is reduced to $AA^{{\tiny{\textcircled{\tiny \#}}}}=(A+E)(A+E)^{{\tiny{\textcircled{\tiny \#}}}}$,~$A^{{\tiny{\textcircled{\tiny \#}}}}A=(A+E)^{{\tiny{\textcircled{\tiny \#}}}}(A+E)$
 and 
$\|A^{{\tiny{\textcircled{\tiny \#}}}}E\|<1.$ Thus, under these assumptions, a perturbation bound for the core inverse is obtained.
\end{rem}

\subsection{The case: $\mathrm{rank}\left(A^k\right)= \mathrm{rank}\left((A+E)^k\right)$}

It is known from \cite{R1982} that if $A$ and  $\{E_j\}$ are $n\times n$ matrices such that $\|E_j\|\rightarrow 0$, then there exists a positive integer $j_0$ such that $$\mathrm{rank}\left((A+E_j)^{k_j}\right)\geq \mathrm{rank}\left(A^{k_j}\right)$$ for $j\geq j_0$, where $k_j=$max$\{\mathrm{ind}(A),\mathrm{ind}(A+E_j)\}$.  

Let $A,~E\in \mathbb{C}^{n\times n}$. For an arbitrary positive integer $h$, define $E(A^h)$ by
$E(A^h)=(A+E)^h-A^h$. Then $\|(A+E)^h\|\leq \|A^h\|+\varepsilon(A^h),$ where 
$$\varepsilon(A^h)=\sum\limits^{h-1}\limits_{i=0}{\rm C}^{i}_{h}\|A\|^i\|E\|^{h-i}\geq\|E(A^h)\|$$ and ${\rm C}^{i}_{h}$ is the binomial coefficient.

\begin{lem}\label{1982}~Let $k=\mathrm{max}\{\mathrm{ind}(A),\mathrm{ind}(A+E)\}$. If
 $\mathrm{rank}\left((A+E)^k\right) > \mathrm{rank}\left(A^k\right)$, then
$$\|(A+E)^{{\tiny{\textcircled{\tiny \dag}}}}\| \geq \frac{1}{[\varepsilon(A^k)]^{1/k}}.$$
\end{lem}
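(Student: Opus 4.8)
The plan is to exploit the defining property $X A^{k+1} = A^k$ of the core-EP inverse at the perturbed matrix $A+E$, combined with the hypothesis that $(A+E)^k$ has strictly larger rank than $A^k$. Writing $B = A+E$ and $Y = B^{{\tiny{\textcircled{\tiny \dag}}}}$, the identity $Y B^{k} = Y B^{k+1} (\text{something})$ is not directly what we need; rather, the relevant relation is $B^k = B B^{{\tiny{\textcircled{\tiny \dag}}}} B^k$, i.e. $B^k = (BY) B^k$ where $BY = BB^{{\tiny{\textcircled{\tiny \dag}}}}$ is the Hermitian idempotent projecting onto $\mathcal{R}(B^k)$. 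From Lemma \ref{1.1}(2) applied at $B$ we also have $B^{{\tiny{\textcircled{\tiny \dag}}}} = B^k (B^{k+1})^{\dag}$, and more usefully the relation $B^{{\tiny{\textcircled{\tiny \dag}}}} B^{k+1} = B^k$ from the defining system (1.1). The first key step is therefore to extract, from the rank inequality $\mathrm{rank}(B^k) > \mathrm{rank}(A^k)$, a nonzero vector that is separated cleanly by the two projections involved, so that applying $B^{{\tiny{\textcircled{\tiny \dag}}}}$ to it produces a controlled lower bound.

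Concretely, I would argue as follows. Since $B B^{{\tiny{\textcircled{\tiny \dag}}}}$ is the orthogonal projector onto $\mathcal{R}(B^k)$ and $\dim \mathcal{R}(B^k) = \mathrm{rank}(B^k) > \mathrm{rank}(A^k) = \dim \mathcal{R}(A^k)$, there exists a unit vector $x \in \mathcal{R}(B^k)$ with $x \perp \mathcal{R}(A^k)$; in particular $B B^{{\tiny{\textcircled{\tiny \dag}}}} x = x$ while $A^k$-related projections annihilate the relevant component. Now write $x = B^k z$ for some $z$ (possible since $x \in \mathcal{R}(B^k)$), and use the defining equation to get $B^{{\tiny{\textcircled{\tiny \dag}}}} B^{k+1} = B^k$, hence for an appropriate preimage one controls $\|B^{{\tiny{\textcircled{\tiny \dag}}}}\|$ from below by testing against $x$. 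The mechanism mirroring \cite{R1982}: one shows $\|B^{{\tiny{\textcircled{\tiny \dag}}}}\| \cdot \|(\text{a certain power involving } E(A^k))\| \geq \|x\| = 1$, and since $(A+E)^k = A^k + E(A^k)$ with $x \perp \mathcal{R}(A^k)$, the component of $(A+E)^k$ that can reach $x$ must come entirely from $E(A^k)$, whose norm is bounded by $\varepsilon(A^k)$. Taking $k$-th roots where the iteration $B^k = (BB^{{\tiny{\textcircled{\tiny \dag}}}})B^k$ and $B^{{\tiny{\textcircled{\tiny \dag}}}}$ being an outer inverse force a $k$-fold composition (as in the passage $X_4 \to NX_4^2 \to N^k X_4^{k+1} = 0$ used earlier) yields the exponent $1/k$ and the stated bound $\|(A+E)^{{\tiny{\textcircled{\tiny \dag}}}}\| \geq [\varepsilon(A^k)]^{-1/k}$.

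The main obstacle I anticipate is the bookkeeping that produces the exponent $1/k$ rather than $1$: one must iterate the relation between $B^{{\tiny{\textcircled{\tiny \dag}}}}$ and powers of $B$ carefully. The cleanest route is probably to use $(B^{{\tiny{\textcircled{\tiny \dag}}}})^{k} B^{k} = B B^{{\tiny{\textcircled{\tiny \dag}}}}$ (which follows by iterating $B^{{\tiny{\textcircled{\tiny \dag}}}} B^{k+1} = B^k$ and $B(B^{{\tiny{\textcircled{\tiny \dag}}}})^2 = B^{{\tiny{\textcircled{\tiny \dag}}}}$ from (1.1)); then applying this operator identity to the unit vector $x \in \mathcal{R}(B^k) \setminus \mathcal{R}(A^k)$ gives $(B^{{\tiny{\textcircled{\tiny \dag}}}})^k B^k x = x$, so $x = (B^{{\tiny{\textcircled{\tiny \dag}}}})^k (A^k x + E(A^k) x) = (B^{{\tiny{\textcircled{\tiny \dag}}}})^k E(A^k) x$ once one checks $(B^{{\tiny{\textcircled{\tiny \dag}}}})^k A^k x$ has no component along $x$ — this is where the choice $x \perp \mathcal{R}(A^k)$ and the fact that $B^{{\tiny{\textcircled{\tiny \dag}}}}$ acts as $T^{-1}$ on the core part via the core-EP decomposition (2.1) must be invoked. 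Then $1 = \|x\| \leq \|B^{{\tiny{\textcircled{\tiny \dag}}}}\|^k \|E(A^k)\| \leq \|B^{{\tiny{\textcircled{\tiny \dag}}}}\|^k \varepsilon(A^k)$, and taking $k$-th roots finishes the proof. I would double-check the orthogonality claim $(B^{{\tiny{\textcircled{\tiny \dag}}}})^k A^k x \perp \text{span}\{x\}$ by passing to the unitary coordinates of (2.1), since that is the one place where a subtlety could hide.
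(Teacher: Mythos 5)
Your overall mechanism is the right one and is essentially the paper's (and Rong's) argument: produce a unit vector $x\in\mathcal{R}\bigl((A+E)^k\bigr)$ on which $(A+E)^k$ is seen only through the perturbation term $E(A^k)$, use that $[(A+E)^{{\tiny{\textcircled{\tiny \dag}}}}]^k(A+E)^k$ acts as the identity on $\mathcal{R}\bigl((A+E)^k\bigr)$, and take $k$-th roots. But your choice of test vector breaks the key cancellation. Writing $B=A+E$, you take $x\in\mathcal{R}(B^k)$ with $x\perp\mathcal{R}(A^k)$, i.e. $A^{k*}x=0$; this does \emph{not} give $A^kx=0$, so $B^kx=A^kx+E(A^k)x$ retains the $A^k$-term, and the claim you yourself flag for checking --- that $x^*(B^{{\tiny{\textcircled{\tiny \dag}}}})^kA^kx=0$ --- is false in general. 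For instance, with $k=1$, $A=\begin{bmatrix}1&1\\0&0\end{bmatrix}$ (index one) and $B$ invertible with $(B^{-1})_{21}\neq 0$, the vector $x=e_2$ lies in $\mathcal{R}(B)$ and is orthogonal to $\mathcal{R}(A)=\operatorname{span}\{e_1\}$, yet $x^*B^{{\tiny{\textcircled{\tiny \dag}}}}Ax=(B^{-1})_{21}\neq 0$. (A smaller point: your asserted operator identity $(B^{{\tiny{\textcircled{\tiny \dag}}}})^kB^k=BB^{{\tiny{\textcircled{\tiny \dag}}}}$ is also not correct --- the left side equals the spectral projector $B^DB$, which is generally not Hermitian; what is true, and all you need, is that it restricts to the identity on $\mathcal{R}(B^k)$.)

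The fix is the paper's choice of subspace: take $x\neq 0$ in $\mathcal{R}\bigl((A+E)^k\bigr)\cap\mathcal{N}(A^k)$, which is nontrivial because $\dim\mathcal{R}\bigl((A+E)^k\bigr)+\dim\mathcal{N}(A^k)=\mathrm{rank}\bigl((A+E)^k\bigr)+n-\mathrm{rank}(A^k)>n$ (this is the cited Lemma 1 of \cite{C1975}; the decomposition $\mathcal{R}(A^k)\oplus\mathcal{N}(A^k)=\mathbb{C}^n$ is what the null space plays against, not the orthogonal complement $\mathcal{N}(A^{k*})$). Then $A^kx=0$ gives $(A+E)^kx=E(A^k)x$ exactly, so with $\|x\|=1$,
\begin{equation*}
1=x^*x=x^*[(A+E)^{{\tiny{\textcircled{\tiny \dag}}}}]^k(A+E)^kx=x^*[(A+E)^{{\tiny{\textcircled{\tiny \dag}}}}]^kE(A^k)x\leq\|(A+E)^{{\tiny{\textcircled{\tiny \dag}}}}\|^k\varepsilon(A^k),
\end{equation*}
and the stated bound follows by taking $k$-th roots. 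With that one substitution (null space in place of orthogonal complement), the rest of your argument --- iterating $B^{{\tiny{\textcircled{\tiny \dag}}}}B^{k+1}=B^k$ to get $(B^{{\tiny{\textcircled{\tiny \dag}}}})^kB^kx=x$ on $\mathcal{R}(B^k)$, and bounding $\|E(A^k)\|\leq\varepsilon(A^k)$ --- is exactly the paper's proof.
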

\begin{proof}~The proof is analogous to the proof of \cite[Theorem 3]{R1982}. For completeness and convenience, we give a proof. 

Since $
\mathcal{R}(A^k)\oplus\mathcal{N}(A^k)=\mathbb{C}^{n}$, $\mathrm{rank}\left((A+E)^k\right) > \mathrm{rank}\left(A^k\right)$, then there exists $x\neq 0$ such that 
$x\in \mathcal{R}((A+E)^k)\cap\mathcal{N}(A^k)$ by \cite[Lemma 1]{C1975}. Without loss of generality, we can assume $\|x\|=1$. Then
\begin{equation*}
\begin{aligned}
1=x^*x=x^*[(A+E)^{{\tiny{\textcircled{\tiny \dag}}}}]^k(A+E)^kx=x^*[(A+E)^{{\tiny{\textcircled{\tiny \dag}}}}]^kE(A^k)x\leq \|(A+E)^{{\tiny{\textcircled{\tiny \dag}}}}\|^k\varepsilon(A^k).
\end{aligned}
\end{equation*} 
Hence $\|(A+E)^{{\tiny{\textcircled{\tiny \dag}}}}\|\geq \frac{1}{[\varepsilon(A^k)]^{1/k}}.$
\end{proof}
 Lemma \ref{1982} declares that $\|(A+E)^{{\tiny{\textcircled{\tiny \dag}}}}\|\rightarrow  \infty$ as
$\|E\| \rightarrow 0$ provided $\mathrm{rank}\left((A+E)^k\right) > \mathrm{rank}\left(A^k\right)$. Also, from Lemma~\ref{1982}  we immediately obtain the following result.

\begin{cor}\label{3.7}~Let $\{E_j\}$ be a sequence of $n\times n$ matrices   such that $\|E_j\|\rightarrow 0$, $k_j=\mathrm{max}\{\mathrm{ind}(A),\mathrm{ind}(A+E_j)\}$. If $(A+E_j)^{{\tiny{\textcircled{\tiny \dag}}}}\rightarrow A^{{\tiny{\textcircled{\tiny \dag}}}}$, then there exists  $j_0$ such that $\mathrm{rank}\left((A+E_j)^{k_j}\right)=\mathrm{rank}\left(A^{k_j}\right)$ for $j\geq j_0$.
\end{cor}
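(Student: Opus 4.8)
The plan is to derive this corollary as an essentially immediate consequence of Lemma~\ref{1982} via a contradiction argument. First I would invoke the fact recalled from \cite{R1982} just before that lemma: since $\|E_j\|\rightarrow 0$, there is an index $j_1$ with $\mathrm{rank}\left((A+E_j)^{k_j}\right)\geq \mathrm{rank}\left(A^{k_j}\right)$ for all $j\geq j_1$. Hence it suffices to show that the \emph{strict} inequality $\mathrm{rank}\left((A+E_j)^{k_j}\right)> \mathrm{rank}\left(A^{k_j}\right)$ can hold for at most finitely many $j$; combining this with the lower bound from \cite{R1982} then yields the claimed equality for all large $j$, with $j_0$ the maximum of $j_1$ and the last index at which strictness occurs.

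Next I would argue by contradiction: assume the strict inequality holds for infinitely many indices and extract from them a subsequence $\{E_{j_i}\}$. Since each $k_{j_i}$ lies in $\{1,\dots,n\}$, the index function takes only finitely many values, so I may pass to a further subsequence along which $k_{j_i}$ equals a fixed integer $k\geq 1$. Applying Lemma~\ref{1982} along this subsequence gives
$$\|(A+E_{j_i})^{{\tiny{\textcircled{\tiny \dag}}}}\|\geq \frac{1}{\left[\varepsilon(A^{k})\right]^{1/k}},\qquad \varepsilon(A^{k})=\sum_{m=0}^{k-1}{\rm C}^{m}_{k}\|A\|^{m}\|E_{j_i}\|^{k-m}.$$
Because $\|E_{j_i}\|\rightarrow 0$ and $k\geq 1$, every summand of $\varepsilon(A^{k})$ tends to zero, so $\varepsilon(A^{k})\rightarrow 0$ and therefore $\|(A+E_{j_i})^{{\tiny{\textcircled{\tiny \dag}}}}\|\rightarrow\infty$. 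This contradicts the hypothesis $(A+E_j)^{{\tiny{\textcircled{\tiny \dag}}}}\rightarrow A^{{\tiny{\textcircled{\tiny \dag}}}}$, since a convergent sequence of matrices is norm-bounded, as is any of its subsequences.

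There is really no serious obstacle here: the proof is short and the hard analytic work is already packaged in Lemma~\ref{1982}. The only point demanding a little care is that the exponent $k_j$ is not constant in $j$, so Lemma~\ref{1982} cannot be applied uniformly; this is dispatched by the elementary observation that $\mathrm{ind}(\cdot)\in\{1,\dots,n\}$, which lets one reduce to a constant-index subsequence before quoting the lemma. Once that reduction is in place, the blow-up estimate and the boundedness of the convergent sequence $\{(A+E_j)^{{\tiny{\textcircled{\tiny \dag}}}}\}$ close the argument.
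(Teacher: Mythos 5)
Your proof is correct and follows essentially the same route as the paper, which simply states ``Proof by contradiction'' relying on Lemma~\ref{1982} together with the rank inequality from \cite{R1982}. Your write-up merely fills in the details the paper leaves implicit (the constant-index subsequence and the boundedness of the convergent sequence $\{(A+E_j)^{{\tiny{\textcircled{\tiny \dag}}}}\}$), and does so correctly.
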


\begin{proof} Proof by contradiction.
\end{proof}

Thus, in this section,  to consider the perturbation bounds for the core-EP inverse, it sufficies to consider the case: 
$\mathrm{rank}\left(A^k\right)= \mathrm{rank}\left((A+E)^k\right)$.

\begin{lem}\emph{\cite{R1982}}\label{R}~Suppose $\mathrm{rank}\left((A+E)^h\right)=\mathrm{rank}\left(A^h\right)$ and $\|(A^h)^{\dag}\|\varepsilon(A^h)<1$. Then 
$$\|[(A+E)^h]^{\dag}\|\leq \frac{\|(A^h)^{\dag}\|}{1-\|(A^h)^{\dag}\|\varepsilon(A^h)}.$$
\end{lem}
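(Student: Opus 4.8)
The plan is to reduce the statement to a standard perturbation bound for the Moore--Penrose inverse by viewing $A^h$ and $(A+E)^h = A^h + E(A^h)$ as a matrix and its perturbation. Write $B = A^h$ and $\widetilde{B} = (A+E)^h$, so that $\widetilde{B} = B + E(A^h)$ with $\|E(A^h)\| \leq \varepsilon(A^h)$. The hypothesis becomes $\mathrm{rank}(\widetilde{B}) = \mathrm{rank}(B)$ together with $\|B^{\dag}\|\,\|\widetilde{B} - B\| \leq \|B^{\dag}\|\,\varepsilon(A^h) < 1$. Under exactly these two conditions, the classical Moore--Penrose perturbation theory (the acute perturbation case, as in \cite{S1969} or \cite{R1982}) gives
$$
\|\widetilde{B}^{\dag}\| \leq \frac{\|B^{\dag}\|}{1 - \|B^{\dag}\|\,\|\widetilde{B} - B\|} \leq \frac{\|B^{\dag}\|}{1 - \|B^{\dag}\|\,\varepsilon(A^h)},
$$
which is precisely the claimed inequality after substituting back $B = A^h$ and $\widetilde{B} = (A+E)^h$.

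To make this self-contained I would recall the one key structural fact used in the rank-preserving case: when $\mathrm{rank}(\widetilde{B}) = \mathrm{rank}(B)$ and $\|B^{\dag}(\widetilde{B}-B)\| < 1$, one has the identity $\widetilde{B}^{\dag} = (I + B^{\dag}(\widetilde{B}-B))^{-1} B^{\dag}$ restricted appropriately, or more robustly the bound $\|\widetilde{B}^{\dag}\| \leq \|B^{\dag}\| / (1 - \|B^{\dag}(\widetilde{B}-B)\|)$; the rank condition is what prevents the norm from blowing up (without it the bound genuinely fails, cf. the discussion around Lemma \ref{1982}). Then I would chain $\|B^{\dag}(\widetilde{B}-B)\| \leq \|B^{\dag}\|\,\|\widetilde{B}-B\| \leq \|B^{\dag}\|\,\varepsilon(A^h)$ and apply Lemma \ref{1.2} to $(I + B^{\dag}(\widetilde{B}-B))^{-1}$ exactly as in the proof of Theorem \ref{theorem:3.4}.

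The only real content beyond bookkeeping is justifying the rank-preserving Moore--Penrose bound itself, and since this is quoted verbatim from \cite{R1982} the paper can simply cite it; the estimate $\|E(A^h)\| \leq \varepsilon(A^h)$ with $\varepsilon(A^h) = \sum_{i=0}^{h-1} \mathrm{C}_h^i \|A\|^i \|E\|^{h-i}$ has already been established in the paragraph preceding Lemma \ref{1982} by expanding $(A+E)^h - A^h$ and applying submultiplicativity and the triangle inequality termwise. So the proof is essentially: set $B = A^h$, invoke the cited perturbation theorem for $B^{\dag}$, and replace $\|\widetilde{B}-B\|$ by its upper bound $\varepsilon(A^h)$. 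I expect no genuine obstacle here --- the main thing to be careful about is that the hypothesis is stated with $\varepsilon(A^h)$ rather than $\|E(A^h)\|$, so one must note the inequality $\|E(A^h)\| \leq \varepsilon(A^h)$ goes the right way to both (i) guarantee $\|B^{\dag}\|\,\|E(A^h)\| < 1$ and (ii) weaken the final bound in the correct direction.
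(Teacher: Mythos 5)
The paper offers no proof of this lemma at all---it is imported verbatim from \cite{R1982}---so your main line (set $B=A^h$, $\widetilde{B}=(A+E)^h$, note $\|\widetilde{B}-B\|=\|E(A^h)\|\leq\varepsilon(A^h)$, and invoke the classical rank-preserving Moore--Penrose perturbation bound) is exactly what the paper does, and the bookkeeping with $\varepsilon(A^h)$ goes the right way, as you observe.

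One caveat about your attempt to make it self-contained: the identity $\widetilde{B}^{\dag}=(I+B^{\dag}(\widetilde{B}-B))^{-1}B^{\dag}$, and hence the plan to ``apply Lemma \ref{1.2} exactly as in the proof of Theorem \ref{theorem:3.4}'', is \emph{not} valid under rank equality alone. In Theorem \ref{theorem:3.4} that identity is earned from the range/null-space conditions, which force the perturbation to live entirely in the invertible block; mere rank preservation does not. For instance $B=\begin{bmatrix}1&0\\0&0\end{bmatrix}$, $F=\begin{bmatrix}0&0\\ \epsilon&0\end{bmatrix}$ preserves rank and has $B^{\dag}F=0$, yet $(B+F)^{\dag}=\frac{1}{1+\epsilon^{2}}\begin{bmatrix}1&\epsilon\\0&0\end{bmatrix}\neq B^{\dag}=(I+B^{\dag}F)^{-1}B^{\dag}$. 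If you want a self-contained justification rather than the citation, the right elementary route is through singular values: with $r=\mathrm{rank}(B)=\mathrm{rank}(\widetilde{B})$ one has $\|\widetilde{B}^{\dag}\|=1/\sigma_{r}(\widetilde{B})$, and Weyl's inequality gives $\sigma_{r}(\widetilde{B})\geq\sigma_{r}(B)-\|\widetilde{B}-B\|\geq 1/\|B^{\dag}\|-\varepsilon(A^h)>0$, which rearranges to the stated bound. With that substitution (or simply resting on the citation, as the paper does) your argument is complete.
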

Combine (1.2) with Lemma \ref{R}, then we have the following result.

\begin{thm}\label{3.9}~Suppose $\mathrm{ind}(A+E)=k$, $\mathrm{rank}\left((A+E)^k\right)=\mathrm{rank}\left(A^k\right)$ and $\|(A^{k+1})^{\dag}\|\varepsilon(A^{k+1})<1$. Then 
\begin{equation}
\|(A+E)^{{\tiny{\textcircled{\tiny \dag}}}}\|\leq \frac{(\|A^k\|+\varepsilon(A^{k}))\|(A^{k+1})^{\dag}\|}{1-\|(A^{k+1})^{\dag}\|\varepsilon(A^{k+1})}.
\end{equation}
\end{thm}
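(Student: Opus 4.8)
The plan is to reduce the estimate to the representation of the core-EP inverse through a Moore--Penrose inverse of a power. Since $\mathrm{ind}(A+E)=k$, formula $(1.2)$ applied to $A+E$ reads $(A+E)^{{\tiny{\textcircled{\tiny \dag}}}}=(A+E)^{k}\bigl[(A+E)^{k+1}\bigr]^{\dag}$, and submultiplicativity of the spectral norm gives
\[
\|(A+E)^{{\tiny{\textcircled{\tiny \dag}}}}\|\le\|(A+E)^{k}\|\cdot\bigl\|\bigl[(A+E)^{k+1}\bigr]^{\dag}\bigr\|.
\]
So it suffices to bound the two factors on the right by, respectively, $\|A^{k}\|+\varepsilon(A^{k})$ and $\|(A^{k+1})^{\dag}\|\big/\bigl(1-\|(A^{k+1})^{\dag}\|\varepsilon(A^{k+1})\bigr)$, and then multiply.

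For the first factor I would invoke directly the elementary estimate recorded immediately before Lemma \ref{1982}, namely $\|(A+E)^{h}\|\le\|A^{h}\|+\varepsilon(A^{h})$, taken with $h=k$.

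For the second factor I would apply Lemma \ref{R} with $h=k+1$. Its two hypotheses are $\|(A^{k+1})^{\dag}\|\varepsilon(A^{k+1})<1$, which is assumed, and the rank equality $\mathrm{rank}\bigl((A+E)^{k+1}\bigr)=\mathrm{rank}\bigl(A^{k+1}\bigr)$. The latter is the one point that needs an argument: because $\mathrm{ind}(A+E)=k$ the ranks of the powers of $A+E$ have stabilised by exponent $k$, so $\mathrm{rank}\bigl((A+E)^{k+1}\bigr)=\mathrm{rank}\bigl((A+E)^{k}\bigr)$; by hypothesis the right-hand side equals $\mathrm{rank}(A^{k})$; and since in this subsection $k=\max\{\mathrm{ind}(A),\mathrm{ind}(A+E)\}\ge\mathrm{ind}(A)$, also $\mathrm{rank}(A^{k+1})=\mathrm{rank}(A^{k})$. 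Chaining these three equalities yields $\mathrm{rank}\bigl((A+E)^{k+1}\bigr)=\mathrm{rank}(A^{k+1})$, so Lemma \ref{R} is applicable and delivers the bound on $\bigl\|[(A+E)^{k+1}]^{\dag}\bigr\|$.

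Multiplying the two bounds gives $(3.7)$. The only genuinely non-routine step is checking the rank equality that licenses Lemma \ref{R}; once that is in place the rest is just submultiplicativity and substitution of the two standing estimates. I would also note that the use of $k\ge\mathrm{ind}(A)$ is essential here: if one insisted only on $k=\mathrm{ind}(A+E)$ it could happen that $\mathrm{rank}(A^{k+1})<\mathrm{rank}(A^{k})$, and then Lemma \ref{R} with $h=k+1$ would no longer apply; under the subsection's convention $k=\max\{\mathrm{ind}(A),\mathrm{ind}(A+E)\}$ this difficulty does not arise.
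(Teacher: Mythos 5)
This is essentially the paper's own proof, which is literally ``Combine (1.2) with Lemma \ref{R}'': you write $(A+E)^{{\tiny{\textcircled{\tiny \dag}}}}=(A+E)^{k}\left[(A+E)^{k+1}\right]^{\dag}$, bound $\|(A+E)^{k}\|\leq\|A^{k}\|+\varepsilon(A^{k})$, and bound $\left\|\left[(A+E)^{k+1}\right]^{\dag}\right\|$ via Lemma \ref{R}, so your route is the same. Your explicit verification of $\mathrm{rank}\left((A+E)^{k+1}\right)=\mathrm{rank}\left(A^{k+1}\right)$, together with the observation that this requires $k\geq\mathrm{ind}(A)$ (available under the subsection's convention $k=\max\{\mathrm{ind}(A),\mathrm{ind}(A+E)\}$ but not from the hypothesis $\mathrm{ind}(A+E)=k$ alone), is a detail the paper passes over in silence and is worth keeping.
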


Theorem \ref{3.9} states that $(A+E)^{{\tiny{\textcircled{\tiny \dag}}}}$ is bounded
provided~$$\mathrm{rank}\left((A+E)^{\mathrm{ind}(A+E)}\right)=\mathrm{rank}\left(A^{\mathrm{ind}(A+E)}\right).$$ This is one of the bases for obtaining the perturbation bound for the core-EP inverse. The other one is contained in the asymptotic expansion of $(A+E)^{{\tiny{\textcircled{\tiny \dag}}}}-A^{{\tiny{\textcircled{\tiny \dag}}}}$.  

Let $k=$max$\{\mathrm{ind}(A),\mathrm{ind}(A+E)\}$. Then
\begin{equation}
\begin{aligned}
(A+E)^{{\tiny{\textcircled{\tiny \dag}}}}-A^{{\tiny{\textcircled{\tiny \dag}}}}
&=-(A+E)^{{\tiny{\textcircled{\tiny \dag}}}}EA^{{\tiny{\textcircled{\tiny \dag}}}}
+(A+E)^{{\tiny{\textcircled{\tiny \dag}}}}-A^{{\tiny{\textcircled{\tiny \dag}}}}+(A+E)^{{\tiny{\textcircled{\tiny \dag}}}}(A+E-A)A^{{\tiny{\textcircled{\tiny \dag}}}}\\
&=-(A+E)^{{\tiny{\textcircled{\tiny \dag}}}}EA^{{\tiny{\textcircled{\tiny \dag}}}}
+(A+E)^{{\tiny{\textcircled{\tiny \dag}}}}(I-AA^{{\tiny{\textcircled{\tiny \dag}}}})
-[I-(A+E)^{{\tiny{\textcircled{\tiny \dag}}}}(A+E)]A^{{\tiny{\textcircled{\tiny \dag}}}}\\
&=-(A+E)^{{\tiny{\textcircled{\tiny \dag}}}}EA^{{\tiny{\textcircled{\tiny \dag}}}}
+(A+E)^{{\tiny{\textcircled{\tiny \dag}}}}[(A+E)^{{\tiny{\textcircled{\tiny \dag}}}}]^{k*}[E(A^k)]^*(I-AA^{{\tiny{\textcircled{\tiny \dag}}}})\\
&~~~~+[I-(A+E)^{{\tiny{\textcircled{\tiny \dag}}}}(A+E)]E(A^k)(A^{{\tiny{\textcircled{\tiny \dag}}}})^{k+1}.
\end{aligned}
\end{equation}
Take $\|\cdot\|$ on (3.8), then
\begin{equation}
\begin{aligned}
\|(A+E)^{{\tiny{\textcircled{\tiny \dag}}}}-A^{{\tiny{\textcircled{\tiny \dag}}}}\|
&\leq \|(A+E)^{{\tiny{\textcircled{\tiny \dag}}}}\|\|A^{{\tiny{\textcircled{\tiny \dag}}}}\|\|E\|+\\
&~~~~\|(A+E)^{{\tiny{\textcircled{\tiny \dag}}}}\|^{k+1}(I+\|A\|\|A^{{\tiny{\textcircled{\tiny \dag}}}}\|)\varepsilon(A^k)+\\
&~~~~[I+\|(A+E)^{{\tiny{\textcircled{\tiny \dag}}}}\|\|A\|+\|(A+E)^{{\tiny{\textcircled{\tiny \dag}}}}\|\|E\|]\|A^{{\tiny{\textcircled{\tiny \dag}}}}\|^{k+1}\varepsilon(A^k).
\end{aligned}
\end{equation}
Now suppose $\|(A^{k+1})^{\dag}\|\varepsilon(A^{k+1})<1$ and $\mathrm{rank}\left((A+E)^k\right)=\mathrm{rank}\left(A^k\right)$, then by Theorem \ref{3.9},  $(A+E)^{{\tiny{\textcircled{\tiny \dag}}}}$ is bounded. 
Thus, from Equality (3.9), 
$\|(A+E)^{{\tiny{\textcircled{\tiny \dag}}}}-A^{{\tiny{\textcircled{\tiny \dag}}}}\|\rightarrow 0$ as $\|E\|\rightarrow 0$, that is to say,
\begin{equation}
(A+E)^{{\tiny{\textcircled{\tiny \dag}}}}=A^{{\tiny{\textcircled{\tiny \dag}}}}+O(\|E\|).
\end{equation}
In order to derive the perturbation bound,  we plug (3.10) into the right side of (3.8). Then
\begin{equation}
\begin{aligned}
(A+E)^{{\tiny{\textcircled{\tiny \dag}}}}-A^{{\tiny{\textcircled{\tiny \dag}}}}
&=-A^{{\tiny{\textcircled{\tiny \dag}}}}EA^{{\tiny{\textcircled{\tiny \dag}}}}+
A^{{\tiny{\textcircled{\tiny \dag}}}}[(A^{{\tiny{\textcircled{\tiny \dag}}}})^k]^*(\sum\limits^{k-1}\limits_{i=0}A^iEA^{k-1-i})^*(I-AA^{{\tiny{\textcircled{\tiny \dag}}}})\\
&~~~~+(I-A^{{\tiny{\textcircled{\tiny \dag}}}}A)\sum\limits^{k-1}\limits_{i=0}A^iEA^{k-1-i}(A^{{\tiny{\textcircled{\tiny \dag}}}})^{k+1}+O(\|E\|^2)\\
&= -A^{{\tiny{\textcircled{\tiny \dag}}}}EA^{{\tiny{\textcircled{\tiny \dag}}}}+
A^{{\tiny{\textcircled{\tiny \dag}}}}[\sum\limits^{k-1}\limits_{i=0}A^iE(A^{{\tiny{\textcircled{\tiny \dag}}}})^{i+1}]^*(I-AA^{{\tiny{\textcircled{\tiny \dag}}}})\\
&~~~~+(I-A^{{\tiny{\textcircled{\tiny \dag}}}}A)\sum\limits^{k-1}\limits_{i=0}A^iE(A^{{\tiny{\textcircled{\tiny \dag}}}})^{i+2}+O(\|E\|^2).
\end{aligned}
\end{equation}
Take $\|\cdot\|$ on (3.11), then we obtain the following result.

\begin{thm}\label{3.10}~Let $k=\mathrm{max}\{\mathrm{ind}(A),\mathrm{ind}(A+E)\}$, $\mathrm{rank}\left((A+E)^k\right)=\mathrm{rank}\left(A^k\right)$ and \\
$\|(A^{k+1})^{\dag}\|\varepsilon(A^{k+1})<1$. 
Then \begin{equation}
\begin{aligned}
\frac{\|(A+E)^{{\tiny{\textcircled{\tiny \dag}}}}-A^{{\tiny{\textcircled{\tiny \dag}}}}\|}{\|A^{{\tiny{\textcircled{\tiny \dag}}}}\|}\leq C(A)\frac{\|E\|}{\|A\|}+o(\|E\|^2),
\end{aligned}
\end{equation}
where $C(A)=[2\sum\limits^{k-1}\limits_{i=0}\|A\|^i\|A^{{\tiny{\textcircled{\tiny \dag}}}}\|^{i+1}(1+\|A\|\|A^{{\tiny{\textcircled{\tiny \dag}}}}\|)+\|A^{{\tiny{\textcircled{\tiny \dag}}}}\|]\|A\|$.
\end{thm}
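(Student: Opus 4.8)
The plan is to pass directly from the first-order expansion (3.11) to the claimed bound by taking the spectral norm, so it is worth recalling first why (3.11) is available: it was obtained by inserting the crude estimate (3.10), namely $(A+E)^{{\tiny{\textcircled{\tiny \dag}}}}=A^{{\tiny{\textcircled{\tiny \dag}}}}+O(\|E\|)$, into the exact identity (3.8); and (3.10) itself follows from the norm estimate (3.9) once Theorem \ref{3.9} --- invoked through the hypotheses $\mathrm{rank}\left((A+E)^{k}\right)=\mathrm{rank}\left(A^{k}\right)$ and $\|(A^{k+1})^{\dag}\|\varepsilon(A^{k+1})<1$ --- guarantees that $(A+E)^{{\tiny{\textcircled{\tiny \dag}}}}$ stays bounded as $\|E\|\rightarrow 0$. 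Hence the only task left for the theorem proper is a careful term-by-term estimate of (3.11).

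The first concrete step is to apply $\|\cdot\|$ to the three main summands of (3.11), using the triangle inequality, submultiplicativity of the spectral norm, and $\|M^{*}\|=\|M\|$. The term $-A^{{\tiny{\textcircled{\tiny \dag}}}}EA^{{\tiny{\textcircled{\tiny \dag}}}}$ contributes at most $\|A^{{\tiny{\textcircled{\tiny \dag}}}}\|^{2}\|E\|$. For the two remaining summands I would use the bounds
$$\|I-AA^{{\tiny{\textcircled{\tiny \dag}}}}\|\le 1+\|A\|\|A^{{\tiny{\textcircled{\tiny \dag}}}}\|,\qquad \|I-A^{{\tiny{\textcircled{\tiny \dag}}}}A\|\le 1+\|A\|\|A^{{\tiny{\textcircled{\tiny \dag}}}}\|,$$
each of the elementary form $\|I-P\|\le 1+\|P\|$ (for $P=AA^{{\tiny{\textcircled{\tiny \dag}}}}$ one in fact has $\|I-P\|\le 1$, since $AA^{{\tiny{\textcircled{\tiny \dag}}}}$ is an orthogonal projection, but the looser bound makes the two inner sums symmetric). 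This bounds each of the second and third summands of (3.11) by $(1+\|A\|\|A^{{\tiny{\textcircled{\tiny \dag}}}}\|)\sum_{i=0}^{k-1}\|A\|^{i}\|A^{{\tiny{\textcircled{\tiny \dag}}}}\|^{i+2}\|E\|$, while the tail remains $O(\|E\|^{2})$.

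Adding the three estimates, dividing through by $\|A^{{\tiny{\textcircled{\tiny \dag}}}}\|$, and factoring out $\|E\|/\|A\|$ collapses the coefficient of $\|E\|/\|A\|$ into exactly
$$C(A)=\left[\,2\sum_{i=0}^{k-1}\|A\|^{i}\|A^{{\tiny{\textcircled{\tiny \dag}}}}\|^{i+1}\left(1+\|A\|\|A^{{\tiny{\textcircled{\tiny \dag}}}}\|\right)+\|A^{{\tiny{\textcircled{\tiny \dag}}}}\|\,\right]\|A\|,$$
and the quadratic tail is absorbed into the error term, giving precisely (3.12). The arithmetic here is routine; the one genuinely substantive point --- and the reason neither hypothesis can be discarded --- is that the remainder in (3.11) is really second order in $\|E\|$, which rests entirely on the uniform boundedness of $(A+E)^{{\tiny{\textcircled{\tiny \dag}}}}$ supplied by Theorem \ref{3.9}. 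If $(A+E)^{{\tiny{\textcircled{\tiny \dag}}}}$ were allowed to blow up --- which happens precisely when $\mathrm{rank}\left((A+E)^{k}\right)>\mathrm{rank}\left(A^{k}\right)$, by Lemma \ref{1982} --- the hidden constant in that remainder would degenerate and the expansion would be vacuous.
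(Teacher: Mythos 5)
Your proposal is correct and follows essentially the same route as the paper: the paper's own proof is simply to take the spectral norm of the expansion (3.11), whose validity rests on the boundedness supplied by Theorem \ref{3.9} and the resulting first-order estimate (3.10). Your term-by-term estimates (including $\|I-AA^{{\tiny{\textcircled{\tiny \dag}}}}\|\le 1+\|A\|\|A^{{\tiny{\textcircled{\tiny \dag}}}}\|$ and its analogue for $I-A^{{\tiny{\textcircled{\tiny \dag}}}}A$) merely spell out the arithmetic the paper leaves implicit, and they reproduce the stated constant $C(A)$ exactly.
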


\begin{cor}\label{3.11}~Let $\{E_j\}$ be a sequence of $n\times n$ matrices   such that $\|E_j\|\rightarrow 0$ and let  $k_j=\mathrm{max}\{\mathrm{ind}(A),\mathrm{ind}(A+E_j)\}$. If there exists  $j_0$ such that  $\mathrm{rank} \left((A+E_j)^{k_j}\right)=\mathrm{rank} \left(A^{k_j}\right)$ for  $j\geq j_0$, then $(A+E_j)^{{\tiny{\textcircled{\tiny \dag}}}}\rightarrow A^{{\tiny{\textcircled{\tiny \dag}}}}$.
\end{cor}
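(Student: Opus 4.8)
The plan is to deduce Corollary~\ref{3.11} directly from Theorem~\ref{3.10}, treating the theorem as the quantitative engine and the corollary as its qualitative shadow. First I would fix $A\in\mathbb{C}^{n\times n}$ and a sequence $\{E_j\}$ with $\|E_j\|\to 0$, and let $k_j=\max\{\mathrm{ind}(A),\mathrm{ind}(A+E_j)\}$; by hypothesis there is $j_0$ with $\mathrm{rank}\bigl((A+E_j)^{k_j}\bigr)=\mathrm{rank}\bigl(A^{k_j}\bigr)$ for $j\ge j_0$. The point is that $k_j$ ranges over the finite set $\{0,1,\dots,n\}$, so I would pass to finitely many subsequences on which $k_j$ is constant, say equal to some $k$; on each such subsequence the rank condition of Theorem~\ref{3.10} holds verbatim with that fixed $k$.

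\medskip
\noindent Next I would verify the remaining hypothesis $\|(A^{k+1})^{\dag}\|\,\varepsilon(A^{k+1})<1$ of Theorem~\ref{3.10} for $j$ large. Recall $\varepsilon(A^{k+1})=\sum_{i=0}^{k}{\rm C}^{i}_{k+1}\|A\|^i\|E_j\|^{k+1-i}$; every term carries at least one factor $\|E_j\|$, so $\varepsilon(A^{k+1})\to 0$ as $\|E_j\|\to 0$, while $\|(A^{k+1})^{\dag}\|$ is a fixed finite quantity depending only on $A$. Hence for $j$ beyond some threshold (depending on $A$ and on the fixed value $k$) the product is below $1$, and Theorem~\ref{3.10} applies on the subsequence, yielding
\begin{equation*}
\frac{\|(A+E_j)^{{\tiny{\textcircled{\tiny \dag}}}}-A^{{\tiny{\textcircled{\tiny \dag}}}}\|}{\|A^{{\tiny{\textcircled{\tiny \dag}}}}\|}\leq C(A)\frac{\|E_j\|}{\|A\|}+o(\|E_j\|^2),
\end{equation*}
with $C(A)$ a constant. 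Letting $j\to\infty$ along the subsequence gives $(A+E_j)^{{\tiny{\textcircled{\tiny \dag}}}}\to A^{{\tiny{\textcircled{\tiny \dag}}}}$ on it. Since the original sequence is the union of finitely many such convergent subsequences, all with the same limit $A^{{\tiny{\textcircled{\tiny \dag}}}}$, the whole sequence converges to $A^{{\tiny{\textcircled{\tiny \dag}}}}$.

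\medskip
\noindent The only delicate point is the uniformity across the finitely many subsequences. One must be careful that the ``$j$ large'' threshold in Theorem~\ref{3.10} depends on $k$ (through $\|(A^{k+1})^{\dag}\|$ and through $C(A)$, whose definition involves a sum up to $k-1$), but since there are only finitely many possible values of $k$ one may simply take the maximum of the finitely many thresholds; alternatively, because $k_j\le n$ always, one can bound everything uniformly by working with the single exponent $n$ in place of $k$, noting $\varepsilon(A^{n+1})\to 0$ as well. Either way the argument closes. I expect this bookkeeping with the index taking finitely many values — exactly the device already used in the proof of Lemma~\ref{2.5} — to be the main (and only) obstacle; the analytic content is entirely carried by Theorem~\ref{3.10}.
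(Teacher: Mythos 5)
Your proposal is correct and takes essentially the same route as the paper, which states Corollary \ref{3.11} as an immediate consequence of the perturbation analysis: the bound of Theorem \ref{3.10} (equivalently, the boundedness from Theorem \ref{3.9} combined with inequality (3.9)) forces $\|(A+E_j)^{{\tiny{\textcircled{\tiny \dag}}}}-A^{{\tiny{\textcircled{\tiny \dag}}}}\|\rightarrow 0$ once $\|E_j\|\rightarrow 0$ and the rank condition holds. Your extra bookkeeping — passing to the finitely many subsequences on which $k_j$ is constant and checking that $\|(A^{k+1})^{\dag}\|\varepsilon(A^{k+1})<1$ holds for large $j$ — just makes explicit the details the paper leaves unstated, in the same spirit as the proof of Lemma \ref{2.5}.
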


``Let $k=\mathrm{max}\{\mathrm{ind}(A),\mathrm{ind}(A+E)\}$,  $\mathrm{rank}\left((A+E)^{k}\right)=\mathrm{rank}\left(A^{k}\right)$" is the same meaning as ``$\mathrm{rank} \left((A+E)^{\mathrm{ind}(A+E)}\right)=\mathrm{rank} \left(A^{\mathrm{ind}(A)}\right)$". Thus, 
Corollary \ref{3.11} in conjunction with Corollary \ref{3.7} gives another proof for  the equivalence of $(1)$ and $(3)$ in Theorem \ref{2.6}, which means that $\mathrm{rank} \left((A+E)^k\right)=\mathrm{rank} \left(A^k\right)$ is the weakest condition for the continuity of the core-EP inverse. 

\begin{rem}~If $k=1$, then the condition of Theorem \ref{3.10} becomes $\mathrm{rank} (A+E)=\mathrm{rank}(A)$ and 
$\|(A^{2})^{\dag}\|\varepsilon(A^{2})<1$. Thus, under these assumptions, we derive a perturbation bound for the core inverse.
\end{rem}

\section{Numerical examples}
In this section, we shall establish a numerical example to compare the upper bounds for  $\frac{\|(A+E)^{{\tiny{\textcircled{\tiny \dag}}}}-A^{{\tiny{\textcircled{\tiny \dag}}}}\|}{\|A^{{\tiny{\textcircled{\tiny \dag}}}}\|}$ derived in   
(3.5) and (3.12). Let $$A=\begin{bmatrix}
                1&0 &0\\
                0&0 &0\\
                0&0 &0
               \end{bmatrix},~E=\begin{bmatrix}
                                               \varepsilon&\varepsilon&0\\
                                               0              &0                &0 \\
                                               0               &0                &0
                                     \end{bmatrix}.$$
Then $\mathrm{ind}(A)=\mathrm{ind}(A+E)=1$, rank$(A)=$rank$(A+E)=1$ and $AA^{{\tiny{\textcircled{\tiny \dag}}}}=(A+E)(A+E)^{{\tiny{\textcircled{\tiny \dag}}}}$, $A^{{\tiny{\textcircled{\tiny \dag}}}}A=(A+E)^{{\tiny{\textcircled{\tiny \dag}}}}(A+E)$.  Thus $A$ and $E$ satisfy the conditions in Theorems \ref{3.1} and \ref{3.10}.                                 
Table 1 shows that our bound (3.5) is slightly better than (3.12).                                     
\begin{table}[!htp]
\caption{Comparison  of upper bounds of  $\|(A+E)^{{\tiny{\textcircled{\tiny \dag}}}}-A^{{\tiny{\textcircled{\tiny \dag}}}}\|/\|A^{{\tiny{\textcircled{\tiny \dag}}}}\|$}
\begin{tabular}{ccccc}
\hline
~~~~~&$\varepsilon=0.1000$~&~$\varepsilon=0.0100$~~&~~$\varepsilon=0.0010$~~&~~$\varepsilon=0.0001$~~~~\\
\hline
Exact&0.0909~~~~~~&0.0099~~~~~~&0.0010~~~~~&1.0000e-04\\
(3.5)&0.1647~~~~~~&0.0143~~~~~~&0.0014~~~~~&1.4143e-04\\
(3.12)&~0.7070+$o(\|E\|^2)$&~0.0705+$o(\|E\|^2)$&~~0.0070+$o(\|E\|^2)$&~~~7.0710e-04+$o(\|E\|^2)$\\
\hline
\end{tabular}
\end{table}

\section{Applications to semistable matrices}
Following \cite{C2001}, a matrix $A\in \mathbb{C}^{n\times n}$ is called semistable if ind$(A)\leq 1$ and the nonzero eigenvalues $\lambda$ of $A$ satisfy Re $\lambda<0$; a semistable matrix with ind$(A)=0$ is stable. It is known that we have an integral representation for the inverse of $A$ if $A$ is stable (for example, see \cite{C2001}):
\begin{equation}
A^{-1}=-\int^{\infty}_0 \mathrm{exp}(tA)\mathrm{d}t.
\end{equation}
In this section, an integral representation for the core-EP inverse of a perturbed matrix $A+E$ is discussed under the condition  $E=EAA^{{\tiny{\textcircled{\tiny \dag}}}}=A^{{\tiny{\textcircled{\tiny \dag}}}}AE$, where $A$ is a semistable matrix. 

\begin{lem}\emph{\cite{C2001}}\label{4.1}~Let $A\in \mathbb{C}^{n\times n}$ be stable. If there exists $\eta>0$ such that $\|E\|<\eta$, then $A+E$ is stable.
\end{lem}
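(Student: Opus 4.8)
The plan is to use that stability is an \emph{open} condition on the matrix, made quantitative through the decay rate of the matrix exponential. Since $A$ is stable it is invertible, so $\mathrm{ind}(A)=0$, and its finitely many eigenvalues $\lambda$ all satisfy $\mathrm{Re}\,\lambda<0$; hence the spectral abscissa $\alpha:=\max\{\mathrm{Re}\,\lambda:\lambda\in\sigma(A)\}$ is a strictly negative real number. The first step is to record the companion estimate that makes the integral representation $(5.1)$ converge: there is a constant $M\geq 1$, depending only on $A$, with $\|\exp(tA)\|\leq M\,e^{\alpha t/2}$ for every $t\geq 0$ (any exponent strictly larger than $\alpha$ works, and $\alpha/2>\alpha$ because $\alpha<0$).

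Next I would compare $\exp(t(A+E))$ with $\exp(tA)$ by differentiating $s\mapsto \exp((t-s)(A+E))\exp(sA)$, which yields the Duhamel identity
\[
\exp\!\bigl(t(A+E)\bigr)=\exp(tA)+\int_{0}^{t}\exp\!\bigl((t-s)(A+E)\bigr)\,E\,\exp(sA)\,\mathrm{d}s .
\]
Putting $g(t):=e^{-\alpha t/2}\|\exp(t(A+E))\|$, taking norms, multiplying through by $e^{-\alpha t/2}$ and substituting $u=t-s$ in the integral transforms this into the scalar inequality $g(t)\leq M+M\|E\|\int_{0}^{t}g(u)\,\mathrm{d}u$, so Gronwall's lemma gives $g(t)\leq M\,e^{M\|E\|t}$, that is,
\[
\bigl\|\exp\!\bigl(t(A+E)\bigr)\bigr\|\leq M\,\exp\!\Bigl(\bigl(\tfrac{\alpha}{2}+M\|E\|\bigr)t\Bigr),\qquad t\geq 0 .
\]
Choosing $\eta:=-\alpha/(2M)>0$, any $E$ with $\|E\|<\eta$ makes the exponent $\tfrac{\alpha}{2}+M\|E\|$ strictly negative, so $\exp(t(A+E))\to 0$ as $t\to\infty$.

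The last step is the standard fact that $\exp(tB)\to 0$ forces every eigenvalue of $B$ to have strictly negative real part; applied to $B=A+E$ this shows in particular that $0\notin\sigma(A+E)$, so $A+E$ is invertible, $\mathrm{ind}(A+E)=0\leq 1$, and hence $A+E$ is stable, with the explicit threshold $\eta=-\alpha/(2M)$. The only point needing genuine care is the bookkeeping in the Gronwall step — one must check that the change of variables $u=t-s$ turns the kernel $e^{-\alpha(t-s)/2}\|\exp((t-s)(A+E))\|$ into exactly $g(u)$ — but this is routine. A completely parallel alternative avoids the exponential altogether: take the positive-definite Hermitian solution $P$ of the Lyapunov equation $A^{*}P+PA=-I$, observe that $(A+E)^{*}P+P(A+E)=-I+(E^{*}P+PE)$ remains negative definite as soon as $\|E\|<\eta:=1/(2\|P\|)$, and conclude stability of $A+E$ from the Lyapunov stability theorem.
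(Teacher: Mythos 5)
Your argument is correct, but note that the paper itself gives no proof of this lemma: it is imported verbatim from the reference \cite{C2001} on perturbation of the Drazin inverse (where it is essentially the statement that the Hurwitz property is an open condition), so there is no in-paper proof to match. Reading the hypothesis in the intended way --- there exists $\eta>0$ depending on $A$ such that $\|E\|<\eta$ forces $A+E$ stable --- your two routes both work: the Duhamel identity plus Gronwall correctly yields $\|\exp(t(A+E))\|\leq M\exp\bigl((\alpha/2+M\|E\|)t\bigr)$, and the final step (decay of $\exp(t(A+E))$ forces every eigenvalue of $A+E$ to have negative real part, hence $A+E$ is invertible and of index $0$) is sound; the Lyapunov alternative with $\eta=1/(2\|P\|)$ is equally valid. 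What your quantitative arguments buy over the shortest possible proof --- continuity of the eigenvalues (roots of the characteristic polynomial) in the entries of the matrix, which gives openness of the stable matrices with no explicit constant --- is an explicit admissible threshold, $\eta=-\alpha/(2M)$ or $\eta=1/(2\|P\|)$; this is genuinely useful downstream, since in Theorem 5.2 the paper sets $\delta(A)=\eta/\kappa(U)$, so an explicit $\eta$ makes the perturbation radius $\delta(A)$ explicit rather than merely existential. The only caution is cosmetic: the lemma as printed (``if there exists $\eta>0$ such that $\|E\|<\eta$'') literally quantifies $\eta$ after $E$, which would make the claim vacuous for any nonzero $E$; you silently corrected the quantifier order, which is the right reading and the one used in the application.
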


\begin{thm}~Let $A\in \mathbb{C}^{n\times n}$ be semistable and let $E\in \mathbb{C}^{n\times n}$ such that $E=EAA^{{\tiny{\textcircled{\tiny \dag}}}}=A^{{\tiny{\textcircled{\tiny \dag}}}}AE$. Then there exists $\delta (A)>0$ such that for $\|E\|<\delta (A)$, 
\begin{equation}
(A+E)^{{\tiny{\textcircled{\tiny \dag}}}}=-\int^{\infty}_0\mathrm{exp}(t(A+E))AA^{{\tiny{\textcircled{\tiny \dag}}}}\mathrm{d}t.
\end{equation}
\end{thm}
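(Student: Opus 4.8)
The plan is to reduce everything to the core-EP decomposition of $A$, together with Lemma \ref{4.1} and the scalar integral identity (5.1). First, since $A$ is semistable we have $\mathrm{ind}(A)\le 1$, so the nilpotent block $N$ in the core-EP decomposition (2.1) satisfies $N=0$; hence $A=U\begin{bmatrix}T&S\\0&0\end{bmatrix}U^*$ with $U$ unitary and $T$ nonsingular, so that $A^{{\tiny{\textcircled{\tiny \dag}}}}=U\begin{bmatrix}T^{-1}&0\\0&0\end{bmatrix}U^*$ and $AA^{{\tiny{\textcircled{\tiny \dag}}}}=U\begin{bmatrix}I&0\\0&0\end{bmatrix}U^*$. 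Writing $E=U\begin{bmatrix}E_1&E_2\\E_3&E_4\end{bmatrix}U^*$, the block computation already carried out in the proof of Theorem \ref{theorem:3.4} shows that $E=EAA^{{\tiny{\textcircled{\tiny \dag}}}}$ forces $E_2=E_4=0$ and then $E=A^{{\tiny{\textcircled{\tiny \dag}}}}AE$ forces $E_3=0$; thus $E=U\begin{bmatrix}E_1&0\\0&0\end{bmatrix}U^*$ and $A+E=U\begin{bmatrix}T+E_1&S\\0&0\end{bmatrix}U^*$.

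Next I would fix the radius $\delta(A)$. The eigenvalues of $T$ are exactly the nonzero eigenvalues of $A$, so $T$ is nonsingular with all its eigenvalues in the open left half-plane, i.e.\ $T$ is stable; by Lemma \ref{4.1} applied to $T$ there is $\eta>0$ such that $\|E_1\|<\eta$ implies $T+E_1$ is stable, in particular nonsingular. Since $\|E_1\|\le\|E\|$, set $\delta(A):=\eta$. Then for $\|E\|<\delta(A)$ the displayed factorization of $A+E$ is again a core-EP decomposition (unitary $U$, nonsingular $(1,1)$-block, nilpotent $(2,2)$-block), so $\mathrm{ind}(A+E)\le1$ and, by the formula accompanying (2.1),
$$(A+E)^{{\tiny{\textcircled{\tiny \dag}}}}=U\begin{bmatrix}(T+E_1)^{-1}&0\\0&0\end{bmatrix}U^*.$$

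Finally I would compute the integrand in this same block form. For $M=\begin{bmatrix}B&S\\0&0\end{bmatrix}$ one has $M^j=\begin{bmatrix}B^j&B^{j-1}S\\0&0\end{bmatrix}$ for $j\ge1$, hence $\exp(tM)=\begin{bmatrix}\exp(tB)&B^{-1}(\exp(tB)-I)S\\0&I\end{bmatrix}$ whenever $B$ is invertible. Taking $B=T+E_1$ and multiplying on the right by $U^*AA^{{\tiny{\textcircled{\tiny \dag}}}}U=\begin{bmatrix}I&0\\0&0\end{bmatrix}$ annihilates the entire second block column, so
$$\exp(t(A+E))AA^{{\tiny{\textcircled{\tiny \dag}}}}=U\begin{bmatrix}\exp(t(T+E_1))&0\\0&0\end{bmatrix}U^*,$$
which decays exponentially since $T+E_1$ is stable; in particular the improper integral converges absolutely. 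Integrating blockwise and invoking (5.1) for the stable matrix $T+E_1$ (so that $-\int_0^\infty\exp(t(T+E_1))\,\mathrm{d}t=(T+E_1)^{-1}$) gives
$$-\int_0^\infty\exp(t(A+E))AA^{{\tiny{\textcircled{\tiny \dag}}}}\,\mathrm{d}t=U\begin{bmatrix}(T+E_1)^{-1}&0\\0&0\end{bmatrix}U^*=(A+E)^{{\tiny{\textcircled{\tiny \dag}}}},$$
which is (5.2). The degenerate case $\mathrm{ind}(A)=0$ (then $A$ is stable, $AA^{{\tiny{\textcircled{\tiny \dag}}}}=I$, and the claim is just (5.1) applied to $A+E$, which is stable by Lemma \ref{4.1}) is subsumed.

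I expect the only genuine work to be the exponential identity for the block-triangular matrix in the last step, together with the observation that right multiplication by $AA^{{\tiny{\textcircled{\tiny \dag}}}}$ removes exactly the non-decaying part of $\exp(t(A+E))$ — the identity sitting in its $(2,2)$-slot — which is precisely what makes the improper integral converge; everything else is routine bookkeeping with the core-EP normal form plus a direct appeal to Lemma \ref{4.1}. As a by-product the argument also shows that $A+E$ is semistable for $\|E\|<\delta(A)$, which is what makes the right-hand side of (5.2) meaningful in the first place.
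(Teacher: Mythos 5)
Your proposal is correct and follows essentially the same route as the paper: core-EP decomposition of $A$, the block form $E=U\bigl[\begin{smallmatrix}E_1&0\\0&0\end{smallmatrix}\bigr]U^*$ extracted exactly as in the proof of Theorem \ref{theorem:3.4}, Lemma \ref{4.1} applied to the stable block $T+E_1$ to fix $\delta(A)$, and blockwise integration of $\exp(t(A+E))AA^{{\tiny{\textcircled{\tiny \dag}}}}$ via (5.1). The only differences are cosmetic: you set $N=0$ and compute the block exponential explicitly, while the paper keeps $N$ and simply observes that right multiplication by $AA^{{\tiny{\textcircled{\tiny \dag}}}}$ annihilates the second block column (and its $\delta(A)=\eta/\kappa(U)$ coincides with your $\eta$ since $U$ is unitary).
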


\begin{proof}~For $A\in \mathbb{C}^{n\times n}$, then we have $$A=U\begin{bmatrix}
           T&S\\
           0&N
          \end{bmatrix}U^*$$ as in (2.5), where $U$ is unitary, $T$ is nonsingular and $N$ is nilpotent. 
          
  From the assumption $E=EAA^{{\tiny{\textcircled{\tiny \dag}}}}=A^{{\tiny{\textcircled{\tiny \dag}}}}AE$, it follows that $E=U\begin{bmatrix}
           E_1&0\\
           0&0
          \end{bmatrix}U^*$ according to the proof of Theorem \ref{theorem:3.4}. 
Then $\|E_1\|=\|U^*EU\|\leq \|U^{-1}\|\|E\|\|U\|=\kappa(U)\|E\|$.  Observe that 
\begin{equation*}
\begin{aligned}
\mathrm{exp}(t(A+E))AA^{{\tiny{\textcircled{\tiny \dag}}}}&=U\begin{bmatrix}
          \mathrm{exp}( t(T+E_1))&\Delta\\
           0&\mathrm{exp}(tN)
          \end{bmatrix}
          \begin{bmatrix}
           I&0\\
           0&0
          \end{bmatrix}U^*\\
          &=U\begin{bmatrix}
          \mathrm{exp}( t(T+E_1))&0\\
           0&0
          \end{bmatrix}U^*.
\end{aligned}
\end{equation*}     
          
Since $A$ is semistable,  $T$ is stable. Set $\delta(A)=\frac{\eta}{\kappa(U)}$, if $\|E\|<\delta(A)$, then $\|E_1\|<\eta$, thus $T+E_1$ is stable by Lemma \ref{4.1}.  Therefore $T+E_1$ is integrable  on the interval $[0,\infty)$.     
In view of (5.1),
\begin{equation*}
\begin{aligned}
-\int^{\infty}_0\mathrm{exp}(t(A+E))AA^{{\tiny{\textcircled{\tiny \dag}}}}\mathrm{d}t
&=-U\begin{bmatrix}
          \int^{\infty}_0\mathrm{exp}( t(T+E_1))\mathrm{d}t&0\\
           0&0
          \end{bmatrix}U^*\\
          &=U\begin{bmatrix}
          (T+E_1)^{-1}&0\\
           0&0
          \end{bmatrix}U^*=(A+E)^{{\tiny{\textcircled{\tiny \dag}}}}.          
\end{aligned}
\end{equation*}
It completes the proof.          
\end{proof}

\vspace{8mm}
\noindent {\large\bf Acknowledgements}\\
This research is supported by the National Natural Science Foundation
of China (No.11771076), the Scientific Innovation Research of College Graduates in Jiangsu Province (No.KYZZ16$\_$0112), partially supported  by   FCT- `Funda\c{c}\~{a}o para a Ci\^{e}ncia e a Tecnologia', within the project UID-MAT-00013/2013.

\end{document}